\newcommand{\bm}[1]{\mbox{\boldmath $#1$}}
\newcommand{\cds}{{\upshape{\textbf{cds}}}\xspace} 
\newcommand{\cdr}{{\upshape{\textbf{cdr}}}\xspace} 
\newcommand{\fixcdr}{\bm{\operatorname{cdr^{\circ}_{\mathit{n}}}\xspace}} 
\newcommand{\fixcds}{\bm{\operatorname{cds^{\circ}_n}\xspace}} 
\newcommand{\s}{\mathfrak{S}} 
\theoremstyle{plain}
\newtheorem{theorem}{Theorem}[section]
\newtheorem{lemma}[theorem]{Lemma}
\newtheorem{conj}[theorem]{Conjecture}
\newtheorem{corollary}[theorem]{Corollary}
\newtheorem{proposition}[theorem]{Proposition}
\newtheorem*{repp@theorem}{\repp@title (reformulated)}
\newcommand{\newrepptheorem}[2]{%
\newenvironment{repp#1}[1]{%
 \def\repp@title{#2 \ref{##1}}%
 \begin{repp@theorem}}%
 {\end{repp@theorem}}}
\theoremstyle{definition}
\newtheorem{definition}[theorem]{Definition}
\newtheorem{example}[theorem]{Example}
\newtheorem*{rep@theorem}{\rep@title \ continued}
\newcommand{\newreptheorem}[2]{%
\newenvironment{rep#1}[1]{%
 \def\rep@title{#2 \ref{##1}}%
 \begin{rep@theorem}}%
 {\end{rep@theorem}}}
\tikzstyle{vertex}=[circle, draw, inner sep=0pt, minimum size=10pt]
\tikzstyle{decision} = [diamond, draw, text width=5em, text badly centered, node distance=3cm, inner sep=0pt]
\tikzstyle{block} = [rectangle, draw, text width=5em, text centered, rounded corners, minimum height=4em]
\title{Sorting permutations: games, genomes, and cycles}
\author[Adamyk]{K.L.M. Adamyk}
\address{Department of Mathematics, University of Colorado, Boulder, CO}
\email{katharine.adamyk@colorado.edu}
\author[Holmes]{E. Holmes}
\address{Department of Mathematics, University of Hawaii at Manoa, Honolulu, HI}
\email{eholmes@math.hawaii.edu}
\author[Mayfield]{G.R. Mayfield}
\address{Mathematics Department, Willamette University, Salem, OR}
\email{gmayfiel@willamette.edu}
\author[Moritz]{D.J. Moritz}
\address{Department of Mathematical Sciences, University of Montana, Missoula, MT}
\email{dennis.moritz@umconnect.umt.edu}
\author[Scheepers]{M. Scheepers}
\address{Department of Mathematics, Boise State University, Boise, ID} 
\email{mscheepe@boisestate.edu}
\author[Tenner]{B.E. Tenner$^\dagger$}
\address{Department of Mathematical Sciences, DePaul University, Chicago, IL}
\email{bridget@math.depaul.edu}
\thanks{$^\dagger$ Research partially supported by a Simons Foundation Collaboration Grant for Mathematicians.}
\author[Wauck]{H.C. Wauck}
\address{Department of Computer Science, University of Illinois at Urbana-Champaign, Urbana,~IL}
\email{hwauck@gmail.com}
\subjclass[2010]{05A05, 68P10, 91A46, 97A20, 05E15, 20B99, 92-08, 92D15}
\keywords{Permutation sorting, context directed reversals, context directed block interchanges, normal play game, misere game, fixed point sorting game}
\begin{document}

\begin{abstract}
Permutation sorting, one of the fundamental steps in pre-processing data for the efficient application of other algorithms, has a long history in mathematical research literature and has numerous applications.  Two special-purpose sorting operations are considered in this paper: {\tt context directed swap}, abbreviated \cds, and {\tt context directed reversal}, abbreviated \cdr. These are special cases of sorting operations that were studied in prior work on permutation sorting. Moreover, \cds and \cdr have been postulated to model molecular sorting events that occur  in the  genome maintenance program of certain species of single-celled organisms called ciliates. 

This paper investigates mathematical aspects of these two sorting operations. The main result of this paper is a generalization of previously discovered characterizations of \cds-sortability of a permutation. The combinatorial structure underlying this generalization suggests natural combinatorial two-player games. These games are the main mathematical innovation of this paper.
\end{abstract}

\maketitle

A \emph{permutation} of $\{1,\ldots,n\}$ is a repetition-free list of the first $n$ positive integers. A procedure that rewrites this list in increasing order has \emph{sorted} the permutation. If a specified operation successfully sorts a permutation, then we say that the permutation is \emph{sortable} by that operation. Definitions of other technical terms used in this introduction will be given in later sections of the paper.

Due to its utility in streamlining numerous search algorithms in everyday use, permutation sorting has been extensively studied. Of the significant body of work on permutation sorting, we review only the sorting operations most directly connected with the focus of this paper.  These operations include reversals and transpositions. 

Much work on the efficiency of sorting permutations by reversals were motivated by the arrival of data from genome sequencing. There are now numerous examples of pairs of organisms A and B for which the relative positions of gene locations on a chromosome of organism A is a permutation of the relative positions of the corresponding genes on a chromosome of organism B (see, for example, \cite{HP, Schetal}).  In the 1930s, biologists Dobzhansky and Sturtevant proposed using the number of inversions (also called ``reversals'') required to sort the gene order of organism A to that of organism B as a criterion for measuring the evolutionary distance between these organisms \cite{DS}. This raised the problem of determining the minimum number of reversals required to sort a given permutation (the \emph{reversal distance problem}), and the corresponding problem of efficiently finding a minimal length sequence of reversals that would accomplish the sorting. Pevzner and collaborators achieved significant results in determining the reversal distance, and in determining a sequence of reversals that accomplishes the sorting in the minimal number of sorting steps. In particular, in the case of signed permutations (permutations where some values are flagged as ``negative''), efficient algorithms for determining the reversal distance and for finding a minimum length sequence of reversals to sort the signed permutation have been established in \cite{HP}. In that work a special type of reversal called an \emph{oriented reversal} was identified as playing a crucial role in minimizing the number of reversal sorting steps. 

In our paper, an oriented reversal is a \emph{context directed reversal}, abbreviated \cdr. There are good reasons for this terminology change. In an independently developing investigation D.M. Prescott discovered that in certain ciliate species, single-celled organisms with the feature of harboring two types of nuclei, micronuclei and macronuclei, the genes appearing in the macronucleus appear in encrypted form in the micronucleus. In particular, several segments of a gene in the macronucleus may appear embedded in separate locations on chromosomes in the micronucleus, in permuted order and sometimes opposite orientation, from the appearance of these segments in the macronuclear gene. Thus the micronuclear version of the gene can be represented as a permutation or a signed permutation  of these segments of the macronuclear gene. In addition, during certain events in the cell cycle of these ciliates, the macronucleus is discarded, while a new macronucleus is constructed by decrypting a copy of the micronucleus. This decryption process involves sorting micronuclear segments of genes into their canonical macronuclear versions. A good introduction to this interesting phenomenon can be found in the reviews \cite{DMP1, DMP}. Two biomolecular models for this sorting process in ciliates have been proposed. The one relevant to the topic of this paper is given in \cite{EPModel}. In the corresponding mathematical representation of the postulated sorting operations of this model, one of the sorting operations is denoted \textsf{hi} (for \emph{hairpin inverted repeat}). It turns out that the \textsf{hi} sorting operation is the \cdr operation. Several questions arise about this postulated operation, including determining which signed permutations are sortable by the \cdr sorting operation. A characterization of signed permutations sortable by \cdr can be gleaned from \cite[Section 5]{HP}. It was also independently obtained in \cite{EHPR}, and again later using a different mathematical framework in \cite{BHR}. 

A second sorting operation on permutations, \emph{block interchanges}, was investigated by Christie in \cite{DC}. In that work Christie proved that there is an efficient method to determine for given permutations A and B the minimum number of block interchanges required to sort A to B. This is the \emph{block interchange distance problem}. In \cite[Lemma 2]{DC} a special kind of block interchange, called a \emph{minimal block interchange} by Christie, emerged as central to minimizing the number of block interchanges required to sort a permutation. Some instances of this specialized block interchange are examples of what we call \emph{context directed swaps}, abbreviated \cds.

Specialized block interchange operations at a molecular level were also independently and later postulated in \cite{EPModel} to be one of the biomolecular sorting operations active during ciliate micronuclear decryption. The corresponding mathematical representation of that sorting operation is the sorting operation \cds of this paper. In studies of the mathematical models of ciliate micronuclear decryption, \cds has been called \textsf{dlad} - an abbreviation for \emph{double loop alternating direct repeat}. The name \textsf{dlad}, like \textsf{hi}, was inspired by the geometrical configurations described in the biomolecular model of \cite{EPModel}. Again the first concern was to identify which permutations are \cds-sortable. As with \cdr, the permutations sortable by \cds were characterized in \cite{EHPR} and later again in \cite{BHR} using a different mathematical framework.

Readers interested in these two lines of mathematical investigation related to biological phenomena might appreciate two recent textbooks. A thorough survey of combinatorial as well as algorithmic aspects of work related to genome rearrangements appears in \cite{FLRTV}, while \cite{EHPPR} gives an in-depth introduction to the micronuclear decryption phenomenon in ciliates, as well as an extensive coherent treatment of mathematical modeling of the process and findings from several prior research articles.

While sortability criteria and efficient algorithms to determine the number of sorting steps in a successful sorting are known for each of \cds and \cdr,  several other mathematical aspects of these sorting operations on (signed) permutations remain to be investigated. 
Sorting steps in applications of \cdr and \cds are irreversible. After a finite number of applications of one of these sorting operations, a state is reached where no further sorting steps are possible. These states are called \emph{fixed points} of the corresponding sorting operation. Thus, a (signed) permutation is sorted when the fixed point reached is the identity permutation. In this paper the \cds-sortability criterion is generalized to provide a linear time criterion for determining, for an arbitrary permutation, which \cds fixed points are achievable by applications of \cds. The \cds-sortability criterion from earlier works is the special case when the fixed point in question is the identity permutation.

For some (signed) permutations the fixed point reached through applications of the featured sorting operation depends on the order in which the legal sorting steps are executed. This strategic aspect of sorting is of independent interest and can be examined by means of two-player combinatorial games. By a classical theorem of Zermelo \cite{Z}, for each instance of the combinatorial games introduced in this paper one of the players has a winning strategy. This raises a fundamental decision problem: for a given instance of the game, which of the two players has a winning strategy in the game? We obtain preliminary results on this problem for the \cds sorting operation. One of these results has since been shown in other work \cite{JSST} to be optimal.

With sortability criteria known for (signed) permutations, progress on enumerative work would seem to be in reach. However, only limited information is available on even such basic questions as how many permutations of $n$ symbols are \cds-sortable. Preliminary findings on this problem are also reported in this paper. 

For this paper we had several choices among the frameworks developed in the cited publications to use to present our results, including various types of graphs, algebras over strings, permutations, and so on. For the study of \cds, we elected to use an associated permutation representation developed by \cite{DL}. We did this for several reasons: (i) numerous software packages for use in combinatorial experimentation have specific modules developed for using permutations - including MAPLE, Sage, etc., (ii) students of mathematics encounter the study of permutations and the familiar notions used in this paper in their undergraduate curriculum, and (iii) several known enumerative results regarding permutations are directly available for use in this investigation. For the study of \cdr, we also use a suitable permutation representation, but ours is somewhat different from the one used in \cite{DL}.

In Section~\ref{section:definitions} of this paper, we introduce fundamental concepts underlying much of our study, namely pointers and signed permutations. In Section~\ref{section:cds}, we briefly review sorting by transpositions and by block interchanges that was studied in the past by Christie \cite{DC}, among others. Then we introduce \emph{context directed swaps}, denoted \cds, which are a restricted version of the block interchange operation. In Theorem~\ref{strategicpilecorollary}, using an object called \emph{the strategic pile} of a permutation, we characterize for each permutation which \cds fixed points are obtainable by applications of \cds.  In Section~\ref{sec:cds implications}, we explore results and objects related to the \cds-sorting operation following from our results in the previous section. Perhaps the most interesting from the point of view of modeling ciliate micronuclear decryption are the \cds Inevitability Theorem, Theorem \ref{cdsinevitable}, and the \cds Duration Theorem, Theorem \ref{cdssteps}. 

In Section~\ref{section:cdsgames} we define two-person games by exploiting the fact that the \cds fixed point of a permutation reached by applications of \cds varies according to the order in which \cds operations are applied. By a classical theorem of Zermelo \cite{Z}, in each of the games we introduce one of the players has a winning strategy. In Theorem~\ref{cdsgreedy} we identify a class of permutations for which the first player to move has a winning strategy in permutation sorting games based on \cds, and a class of permutations for which the second player to move has a winning strategy in permutation sorting games based on \cds. Subsequent work in \cite{JSST}, establishing that Theorem~\ref{cdsgreedy} is to an extent optimal, suggests that the decision problem of determining for a generic permutation which player has the winning strategy may be of high complexity. 

In Section~\ref{section:cdr}, we introduce the sorting operation \cdr and a number of basic decision problems related to \cdr, and discuss related games in Section~\ref{section:cdrgames}. Although the main focus of this paper is the mathematical theory of sorting permutations by constrained sorting operations, we do briefly discuss some of the broader implications of some of our results in Section~\ref{section:application}. Finally, in Section~\ref{section:future} we point out a few specific mathematical problems emerging from this work, and we point out problems raised by some recent experimental findings in connection with the ciliate decryptome.  

\section{Permutations, signed permutations, and pointers}\label{section:definitions}
 
For a positive integer $n$, the set $\s_n$ consists of all permutations of $\{1,\ldots,n\}$. This is the \emph{symmetric group}, also known as the \emph{finite Coxeter group of type $A$}. There are many common ways to denote permutations in the literature. For the most part, we will use \emph{inverse image notation}, in which $\pi \in \s_n$ would be written as
$$\pi = [\pi^{-1}(1) \ \pi^{-1}(2) \ \cdots \ \pi^{-1}(n)].$$
In Section~\ref{section:cds}, we will also need to use \emph{cycle notation}, in which a permutation is written as a product of disjoint cycles of the form $(a \ \pi(a) \ \pi^2(a) \ \cdots)$, usually oriented so that $a$ is the minimum value of its cycle.

\begin{example}
The permutation $\pi \in \s_8$ whose inverse image notation is
$$\pi = [2 \ 7 \ 1 \ 5 \ 8 \ 6 \ 3 \ 4]$$
can be written in cycle notation as
$$\pi = (1372)(485)(6).$$
\end{example}

The \emph{finite Coxeter group of type $B$} consists of \emph{signed} permutations, and we will denote these sets by $\s^{\pm}_n$. Signed permutations are bijections $\pi$ on $\{\pm1,\ldots,\pm n\}$ that satisfy $\pi(-i) = -\pi(i)$ for all $i$. Note that this requirement means that $|\s^{\pm}_n| = 2^nn!$, and that a signed permutation $\pi \in \s^{\pm}_n$ can be completely described by the inverse image notation
$$\pi = [\pi^{-1}(1) \ \pi^{-1}(2) \ \cdots \ \pi^{-1}(n)].$$
Thus we can say that such a $\pi$ is a signed permutation \emph{of $n$ letters}.
 
The \cds and \cdr sorting operations will be defined in terms of ``pointers'' in a string. Consider a (signed or unsigned) permutation $\pi = [ a_1 \ \cdots \ a_n]$, and recall that $|a_i| \in \{1,\ldots,n\}$ for each $i$. To each $a_i \not\in\{\pm 1,\pm n\}$ we associate two \emph{pointers}, while $a_i \in \{\pm 1\}$ gets only a right pointer and $a_i \in \{\pm n\}$ gets only a left pointer. The \emph{left} pointer is
$$\lambda(a_i) = \begin{cases}
(a_i - 1, a_i)& \text{if } a_i  >1, \text{ and}\\
-(|a_i| + 1, |a_i|) & \text{if } a_i< 0,
\end{cases}$$
and the \emph{right} pointer is
$$\rho(a_i) = \begin{cases}
(a_i , a_i+1) & \text{if } 0 < a_i < n, \text{ and} \\
-(|a_i| , |a_i| - 1) & \text{if } a_i< 0.
\end{cases}$$
The entries of a pointer are always positive integers. We shall call a pointer $-(i+1,i)$ the \emph{negative} of the pointer $(i,i+1)$. We write $q = -p$ to denote this relationship between $p$ and its negative, $q$. Note that each pointer appears twice among the $2n-2$ pointers in a permutation of $\s_n$ or (possibly in its negative version) in a signed permutation of  $\s^{\pm}_n$.

\begin{example}
In the signed permutation $\pi = [5 \ -3 \ 2 \ -4 \ 1]$, the entry $5$ has left pointer $(4,5)$, and no right pointer. Similarly the entry $-3$  has left pointer $-(4,3)$ and right pointer $-(3,2)$. Here is $\pi$ with all pointers marked:
$$\pi =[_{(4,5)}5 \ \ \ _{-(4,3)}-\!3_{-(3,2)} \ \ \ _{(1,2)}2_{(2,3)} \ \ \ _{-(5,4)}-\!4_{-(4,3)} \ \ \ 1_{(1,2)} ].$$
\end{example}

\begin{definition}
Fix a (signed or unsigned) permutation $\pi = [a_1 \ \cdots \ a_n]$, and $i < n$. We call $a_i$ an \emph{adjacency} of $\pi$ if $a_i + 1 = a_{i+1}$. 
\end{definition}

Note that an unsigned permutation is sorted if every position is an adjacency.

\section{Sorting by context directed block swaps, and the strategic pile}\label{section:cds}

In this section we consider unsigned permutations. These are the elements of $\s_n$. 
\begin{definition}
Consider a permutation $\pi$ in which the pointers $p$ and $q$  appear in the order $p \cdots q \cdots p \cdots q$. The \emph{\cds operation on $\pi$ with context $\{p,q\}$} swaps the two blocks of letters that are flanked on the left by $p$ and on the right by $q$.
\end{definition}

For $p = (x,x+1)$ and $q = (y,y+1)$, the \cds operation on a permutation $\pi$ with context $\{p,q\}$ has one of the forms depicted in Table~\ref{table:cds}. 

\begin{table}[htbp]
$$\begin{array}{|l|l|}\hline
& \text{Result of \cds with context }\\
\text{Permutation } \pi & \{(x,x+1),(y,y+1)\} \\ \hline
[ \cdots \ x \ \alpha \ y \ \beta \ (x+1) \ \gamma \ (y+1) \ \cdots] & [ \cdots \ x \ (x+1) \ \gamma \ \beta \ \alpha \ y \ (y+1) \ \cdots]\\
\hline
[ \cdots \ x \ \alpha \ (y+1) \ \beta \ (x+1) \ \gamma \ y \ \cdots] & [ \cdots \ x \ (x+1) \ \gamma \ y \ (y+1) \ \beta \ \alpha \ \cdots]\\
\hline
[ \cdots \ (x+1) \ \alpha \ y \ \beta \ x \ \gamma \ (y+1) \ \cdots] & [ \cdots \ \gamma \ \beta \ x \ (x+1) \ \alpha \ y \ (y+1) \ \cdots]\\
\hline
[ \cdots \ (x+1) \ \alpha \ (y+1) \ \beta \ x \ \gamma \ y \ \cdots] & [ \cdots \ \gamma \ y \ (y+1) \ \beta \ x \ (x+1) \ \alpha \ \cdots]\\
\hline
\end{array}$$
\caption{\cds operation with context $\{(x,x+1),(y,y+1)\}$ where $x$ and $y$ are symbols, and $\alpha$, $\beta$ and $\gamma$ are substrings of $\pi$.}\label{table:cds}
\end{table}

In \cite{EHPR} the \cds operation is named \textsf{dlad}. In \cite{DC} block interchanges of any two disjoint segments of entries in a permutation are considered. Towards achieving a sorted permutation in a minimal number of sorting steps, \cite{DC} identifies a special type of block interchange named a \emph{minimal block interchange}: These are described as follows:   

For a permutation $\pi$, first ``frame" elements of $\s_n$ by adding the entry ``$0$" at the leftmost end, and ``$n+1$" at the rightmost end, thus: For $\pi = \lbrack \pi_1 \ \pi_2 \ \cdots \ \pi_n\rbrack$, the framed version $\pi^{\star}$ of $\pi$ is $\lbrack 0 \ \pi_1 \ \pi_2 \ \cdots \ \pi_n \ (n+1) \rbrack$. 
If $\pi$ is not the identity permutation then there are numbers $x < y$ with $y$ appearing to the left of $x$ in $\pi$. 
Fix the least such $x$, and then fix the largest $y$ appearing to its left. Then with this information emphasized, the framed version of $\pi$ is
$\pi^{\star} = \lbrack 0 \ \alpha \  (x-1) \ \beta \ y \ \gamma \ x \ \delta \ (y+1) \ \nu \ (n+1) \rbrack,$
where $x$ and $y$ are symbols, and $\alpha$, $\beta$, $\gamma$, $\delta$ and $\nu$ are substrings of $\pi$. There are the following possibilities regarding $x-1$ and $y+1$:
When $x=1$ then $x-1 = 0$ is not a symbol in the original $\pi$, and then $\pi^{\star}$ has the form $\lbrack 0 \ \beta \ y \ \gamma \ 1 \ \delta \cdots \ (n+1)\rbrack$. When $y = n$ then $y+1 = n+1$ is not a symbol in the original $\pi$, and then $\pi^{\star}$ has the form $\lbrack 0 \ \alpha \  (x-1) \ \beta \ n \ \gamma \ x \ \delta \  (n+1) \rbrack$. If $x>1$ and $y<n$ then $x-1$ and $y+1$ are symbols in the original $\pi$. 

Now the \emph{minimal block interchange} associated with $x$ and $y$ swaps the segments $\beta \ y$ and $x \ \delta$, producing $\phi^{\star} = [0 \ \alpha \  (x-1) \ x \ \delta  \ \gamma \ \beta \ y \ (y+1) \ \nu \ (n+1)]$. In the case when $1<x$ and $y<n$, this minimal block interchange is in fact an application of \cds with pointer context $\{ (x-1,\; x), \ (y,\; y+1)\}$. In the cases when $x=1$ or $y=n$, the corresponding minimal block interchange is \emph{not} an application of \cds.

\begin{definition}
The permutation $\varphi$ is a \emph{fixed point} of \cds if there are no pointers $p$ and $q$ that appear in $\varphi$ as $p \cdots q \cdots p \cdots q$. 
\end{definition}

\begin{lemma}\label{lem:cdsfixedpoints}
The \cds fixed points in $\s_n$ are the permutations $[k \ (k+1) \ \cdots \ n \ 1 \ \cdots \ (k-1)]$ for $k\ge 1$. These permutations form the cyclic subgroup of $\s_n$ generated by
$$[2 \ 3 \ \cdots \ n \ 1] = (1 \ n \ n-1 \ \cdots \ 3 \ 2).$$
\end{lemma}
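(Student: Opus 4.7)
My plan is to characterize \cds fixed points via the block structure of $\pi$ (maximal runs of consecutive increasing values) and then identify the resulting list with the cyclic subgroup. First I would observe that whenever $i$ is immediately followed by $i+1$ in $\pi$, the two occurrences of pointer $(i,i+1)$ in the pointer list of $\pi$ sit back-to-back: the right pointer of $i$ is directly followed by the left pointer of $i+1$, with nothing between them. Such an adjacency pointer therefore cannot participate in any $pqpq$ interleaving, so only non-adjacency pointers matter. If $\pi$ has $b$ maximal blocks of consecutive increasing values, then it has exactly $b-1$ non-adjacency pointers (one between each pair of value-consecutive blocks). Permutations with $b\le 2$ thus have at most one non-adjacency pointer, are automatically \cds fixed points, and a direct inspection shows that these are precisely the $n$ permutations $[k,k+1,\ldots,n,1,\ldots,k-1]$ for $k=1,\ldots,n$.

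The remaining task is the converse: $b\ge 3$ forces a $pqpq$ pattern. I would label the blocks $V_1,\ldots,V_b$ in increasing value order and write $v_j\in\{1,\ldots,b\}$ for the position of $V_j$, so that $(v_1,\ldots,v_b)$ is a permutation of $\{1,\ldots,b\}$ satisfying $v_{k+1}\neq v_k+1$ (otherwise $V_k$ and $V_{k+1}$ would merge into one block). Treating each block position $v$ as contributing a left-slot at sequence-position $2v-1$ and a right-slot at $2v$, the two occurrences of the $j$-th non-adjacency pointer sit at sequence-positions $2v_j$ and $2v_{j+1}-1$. A finite case analysis on the six possible orderings of the triple $(v_{j-1},v_j,v_{j+1})$ at an interior index $j$ then shows that the arcs of pointers $j-1$ and $j$ interleave in every ordering except two: the disjoint ordering $v_{j-1}<v_j<v_{j+1}$ and the nested ordering $v_{j+1}<v_{j-1}<v_j$.

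The main obstacle is to rule out the possibility that every interior index $j$ lies in one of these two non-interleaving orderings. Both orderings require $v_{j-1}<v_j$, so imposing them at every interior index forces $v_1<v_2<\cdots<v_{b-1}$, and the validity constraint upgrades each strict increase to a gap of at least $2$. Consequently $v_{b-1}\ge v_1+2(b-2)\ge 2b-3$; for $b\ge 4$ this already exceeds $b$, contradicting $v_{b-1}\le b$, and for $b=3$ the bound forces $v_1=1,\,v_2=3,\,v_3=2$, which has $v_1<v_3<v_2$ and is neither of the two non-interleaving orderings. Hence for every $b\ge 3$ some consecutive pair of non-adjacency pointers interleaves, so $\pi$ is not a \cds fixed point. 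The identification with the cyclic subgroup is then the direct computation $c^{k-1}=[k,k+1,\ldots,n,1,2,\ldots,k-1]$ for $c=[2,3,\ldots,n,1]$, showing that the $n$ listed fixed points are precisely $c^0,c^1,\ldots,c^{n-1}$.
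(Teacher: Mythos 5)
Your setup (reducing to the $b-1$ non-adjacency pointers via the block decomposition) and the $b\le 2$ direction are fine, and the paper itself states this lemma without proof, so the real question is whether your converse argument closes. It does not, for two compounding reasons. First, the six-way case analysis is wrong: with pointer $j-1$ occupying slots $2v_{j-1}$ and $2v_j-1$ and pointer $j$ occupying slots $2v_j$ and $2v_{j+1}-1$, the ordering $v_j<v_{j+1}<v_{j-1}$ is \emph{also} non-interleaving, since then the four slots occur in the order $2v_j-1<2v_j<2v_{j+1}-1<2v_{j-1}$, i.e.\ the nested pattern $p\,q\,q\,p$. So three of the six orderings are non-interleaving, not two, and the third one has $v_j<v_{j-1}$; hence the key inference ``non-interleaving at every interior index forces $v_1<v_2<\cdots<v_{b-1}$'' does not follow, and the counting argument built on it collapses.

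Second, and more fundamentally, restricting attention to \emph{consecutive} non-adjacency pointers cannot be repaired. Consider $\pi=[3\ 1\ 4\ 2]$, which has $b=4$ singleton blocks and $(v_1,v_2,v_3,v_4)=(2,4,1,3)$, a valid configuration (no $v_{j+1}=v_j+1$). Its pointer occurrences read, left to right, $(2,3)\ (3,4)\ (1,2)\ (3,4)\ (1,2)\ (2,3)$: both consecutive pairs $\{(1,2),(2,3)\}$ and $\{(2,3),(3,4)\}$ are nested (the second falls exactly into the missed case above), yet the non-consecutive pair $\{(1,2),(3,4)\}$ appears in alternating order, so \cds applies and $\pi$ is not a fixed point. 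Your argument would never detect this crossing. To finish the converse you must argue over arbitrary pairs of the $b-1$ arcs --- for example, show that if no two of them cross then they form a nested-or-disjoint family, and that this, combined with the constraint $v_{j+1}\neq v_j+1$, is impossible for $b\ge 3$. As written, the proof has a genuine gap at exactly the step that distinguishes the cyclic rotations from permutations such as $[3\ 1\ 4\ 2]$.
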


In the case when $\pi\in \s_n$ is a \cds fixed point, say $\pi = \lbrack (k+1) \ (k+2) \ \cdots \ n \ 1 \ 2 \ \cdots \ k\rbrack$, the corresponding minimal block interchange (in the sense of \cite{DC}) produces the identity permutation, and corresponds to the \emph{ boundary }\textsf{ ld} operation used in \cite{EHPR}. Neither of these operations is an example of \cds.  

Towards characterizing for an element $\pi$ of $\s_n$ which of the \cds fixed points are obtainable from applications of \cds to $\pi$, we introduce the following construct, first described in \cite{DL}: 

\begin{definition}\label{defn:permutation product}
Given $\pi \in \s_n$ with inverse image notation $\pi = [a_1 \ a_2 \ \cdots \ a_n]$, define two permutations on the set $\{0,1,\ldots,n\}$, 
written in cycle notation, as follows:
$$X = (0 \ 1 \ 2 \ \cdots \ n)$$ 
and
$$Y_{\pi} = (
a_n \ a_{n-1} \ \cdots \ a_2 \ a_1 \ 0).$$
Now set
$$C_{\pi} = Y_{\pi}X,$$
where products of permutations are compositions of maps, and so multiply from right to left.
\end{definition}

\begin{example}\label{ex:computing C_{pi}}
Let $\pi = [4 \ 2 \ 6 \ 7 \ 1 \ 3 \ 5] \in \s_7$. Then $Y_{\pi} = ( 
5 \ 3 \ 1 \ 7 \ 6 \ 2 \ 4 \ 0)$ and $C_{\pi} = (0 \ 7 \ 5 \ 2 \ 1 \ 4 \ 3)(6)$.
 The value $6$ was an adjacency of $\pi$ because $a_3 = 6$ and $a_4 = 7$, and we see that $C_{\pi}$ does indeed fix the value $6$. 
\end{example}

We begin with some straightforward observations about the permutation $C_{\pi}$ for $\pi=  \lbrack a_1 \ \cdots \ a_n\rbrack \in \s_n$.

\begin{lemma}\footnote{We say that ``$C_{\pi}$ fixes $a_i$".}
If $\pi$ has an adjacency at $a_i$, then the cycle decomposition of $C_{\pi}$ 
includes the $1$-cycle $(a_i)$.  
\end{lemma}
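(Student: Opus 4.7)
The plan is to verify by direct computation that $C_\pi(a_i) = a_i$, exploiting the convention that products compose right-to-left so that $C_\pi(a_i) = Y_\pi(X(a_i))$. The adjacency hypothesis forces $i < n$ and $a_{i+1} = a_i + 1$, and in particular $a_i$ is at most $n-1$.

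First I would apply $X = (0 \ 1 \ 2 \ \cdots \ n)$ to $a_i$. Reading off this cycle, $X$ sends $j \mapsto j+1$ for $0 \le j < n$ and sends $n \mapsto 0$. Since $a_i \le n-1$, we obtain $X(a_i) = a_i + 1$, which by the adjacency condition is exactly $a_{i+1}$. Next I would apply $Y_\pi = (a_n \ a_{n-1} \ \cdots \ a_2 \ a_1 \ 0)$ to $a_{i+1}$. By the cycle convention, each listed entry is sent to the entry appearing immediately to its right, with $0$ wrapping around to $a_n$. Since $i+1 \le n$, the entry $a_{i+1}$ does appear among the entries of this cycle, and the entry listed immediately after it is $a_i$ (note that the borderline case $i+1 = n$ is still fine: $a_n$ is listed first and is followed by $a_{n-1} = a_i$, consistent with the rule). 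Therefore $Y_\pi(a_{i+1}) = a_i$, and combining the two steps yields $C_\pi(a_i) = Y_\pi(X(a_i)) = Y_\pi(a_{i+1}) = a_i$. This places the singleton $(a_i)$ in the cycle decomposition of $C_\pi$.

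Honestly, there is no serious obstacle in this argument; it is a pure unpacking of the definitions of $X$ and $Y_\pi$. The only detail worth flagging is the edge-case bookkeeping: the adjacency condition together with $i < n$ guarantees both that $a_i \ne n$ (so that $X$ does not send $a_i$ to $0$) and that the formula $Y_\pi(a_{i+1}) = a_i$ is valid regardless of whether $i+1 < n$ or $i+1 = n$. One can quickly spot-check the conclusion against Example~\ref{ex:computing C_{pi}}, where $a_3 = 6$, $a_4 = 7$ is an adjacency, and indeed $(6)$ is a fixed $1$-cycle of $C_\pi$.
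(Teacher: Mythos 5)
Your computation is correct and is exactly the intended argument: the paper states this lemma as a ``straightforward observation'' with no written proof, and the only route is the direct unpacking of the definitions that you carry out, namely $C_\pi(a_i) = Y_\pi(X(a_i)) = Y_\pi(a_{i+1}) = a_i$. Your edge-case checks (that the adjacency forces $a_i \le n-1$ so $X(a_i) = a_i+1$, and that $Y_\pi(a_{i+1}) = a_i$ holds even when $i+1 = n$) are accurate and complete the verification.
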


Suppose now that $\pi$ has a pointer pair $p = (x,x+1)$ and $q=(y,y+1)$ appearing as $p \cdots q \cdots p \cdots q$. By \cite[Lemma 2]{DC}, applying \cds to $\pi$ with this context produces a permutation $\sigma$ having exactly two more cycles in its cycle decomposition than $\pi$ has.

\begin{lemma}\label{lem:how cds affects cycles} 
Suppose that $\pi$ has a pointer pair $p = (x,x+1)$ and $q = (y,y+1)$ that appear in alternating order. Let $\pi'$ be the result of applying \cds to $\pi$ with context $\{p,q\}$. Then the only cycles in the cycle decomposition of $C_{\pi}$ that do not also appear in the cycle decomposition of $C_{\pi'}$ are those that contain $x$ or $y$. Moreover, $x$ and $y$ are fixed by the permutation $C_{\pi'}$, and the rest of the cycle decomposition for $C_{\pi'}$ is obtained from that for $C_{\pi}$ by deleting $x$ and $y$ from the cycles in which they appear.
\end{lemma}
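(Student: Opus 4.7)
My plan is to prove the lemma by unraveling the definition of $C_\pi$ into a local statement about consecutive pairs in $\pi$, and then comparing $C_\pi$ with $C_{\pi'}$ pointwise. Unpacking $C_\pi = Y_\pi X$ shows immediately that if one writes the cyclic sequence $0, a_1, \ldots, a_n$ on the ground set $\{0, 1, \ldots, n\}$, then for every consecutive pair $(a_j, a_{j+1})$ in that cyclic sequence one has
\[
C_\pi(a_{j+1} - 1) = a_j,
\]
with the convention that ``$0 - 1$'' is taken to mean $n$ (so the wrap-around pair $(a_n, 0)$ contributes $C_\pi(n) = a_n$). Hence $C_\pi$ is determined by the multiset of these consecutive pairs.

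Next, I would proceed through the four forms in Table~\ref{table:cds}. For the first form, writing $\alpha = \alpha_1 \cdots \alpha_r$, $\beta = \beta_1 \cdots \beta_s$, $\gamma = \gamma_1 \cdots \gamma_t$ and assuming for the moment that these substrings are all nonempty, a direct listing shows that the consecutive pairs of $\pi$ differ from those of $\pi'$ only in the four pairs
\[
(x, \alpha_1), \ (y, \beta_1), \ (\beta_s, x+1), \ (\gamma_t, y+1),
\]
which in $\pi'$ are replaced by
\[
(x, x+1), \ (\beta_s, \alpha_1), \ (\gamma_t, \beta_1), \ (y, y+1).
\]
Every other consecutive pair, whether internal to $\alpha, \beta, \gamma$ or outside the affected region, is preserved. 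Translating through the identity in the first paragraph, $C_\pi$ and $C_{\pi'}$ therefore agree at every input except $x, y, \alpha_1 - 1$, and $\beta_1 - 1$, and a short calculation gives $C_{\pi'}(x) = x$, $C_{\pi'}(y) = y$, $C_{\pi'}(\alpha_1 - 1) = \beta_s = C_\pi(x)$, and $C_{\pi'}(\beta_1 - 1) = \gamma_t = C_\pi(y)$. Equivalently, $C_{\pi'}$ is obtained from $C_\pi$ by fixing $x$ and $y$ and rerouting their $C_\pi$-preimages directly to their $C_\pi$-images.

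This local description implies the cycle-theoretic assertion by a general observation: if $\tau$ agrees with a permutation $\sigma$ everywhere except that $\tau(x) = x$, $\tau(y) = y$, $\tau(\sigma^{-1}(x)) = \sigma(x)$, and $\tau(\sigma^{-1}(y)) = \sigma(y)$, then the cycles of $\tau$ are obtained from those of $\sigma$ by deleting $x$ and $y$ from whichever cycles contain them and adjoining the two $1$-cycles $(x)$ and $(y)$, while every cycle of $\sigma$ missing both $x$ and $y$ is preserved verbatim. This works uniformly whether $x$ and $y$ lie in the same cycle of $C_\pi$ or in different ones; in either case the net number of cycles grows by exactly $2$, matching the count predicted by \cite[Lemma~2]{DC}, which is a useful sanity check.

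The principal obstacle is simply the bookkeeping needed to repeat this pair-analysis for the three remaining forms in Table~\ref{table:cds} and for the subcases in which one or more of $\alpha, \beta, \gamma$ is empty (in which case, for instance, ``$\alpha_1$'' must be replaced by $y$ throughout). However, the displayed pair-replacement pattern and the resulting formulas for $C_{\pi'}$ take the same shape in every case, and the cyclic convention for the identity $C_\pi(a_{j+1} - 1) = a_j$ uniformly handles boundary situations such as $x = 1$ or $y = n - 1$. No idea beyond this single identity is needed.
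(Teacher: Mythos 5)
Your argument is correct and is essentially the paper's own proof, which consists of the single line ``Use Table~\ref{table:cds} and the definition of $C_{\pi}$''; you have simply carried out that computation explicitly via the identity $C_{\pi}(a_{j+1}-1)=a_j$ for consecutive pairs of the cyclic word $0,a_1,\ldots,a_n$. One small caveat: as literally stated your ``general observation'' has conflicting hypotheses when $C_{\pi}(x)=y$ or $C_{\pi}(y)=x$ (which does occur, e.g.\ when $\beta$ is empty, since then $\tau(x)=x$ and $\tau(\sigma^{-1}(y))=\sigma(y)$ both constrain $\tau$ at $x$), but the direct pair count in those degenerate subcases still yields exactly the deletion of $x$ and $y$ from their common cycle, so the conclusion is unaffected.
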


\begin{proof}
Use Table~\ref{table:cds} and the definition of $C_{\pi}$.
\end{proof}

\begin{repexample}{ex:computing C_{pi}}
The pointers $(3,4)$ and $(5,6)$ appear in alternating order in $\pi = [4 \ 2 \ 6 \ 7 \ 1 \ 3 \ 5]$. If we apply \cds with that context, we obtain $\sigma = [5 \ 6 \ 7 \ 1 \ 3 \ 4 \ 2]$, for which $C_{\sigma} = (0 \ 7 \ 2 \ 1 \ 4)(3)(5)(6)$. 
\end{repexample}

Observe from Lemma~\ref{lem:how cds affects cycles} that applying \cds to the permutation $\pi$ produces two fixed points in the permutation $C_{\pi}$. An element of $\s_n$ can have at most $n$ fixed points, so there is a limit to the number of times \cds could possibly be applied.

\begin{corollary}[\cds Termination]\label{cdsuniversal}
The sorting operation \cds can be applied only finitely many times to a given permutation before producing a \cds fixed point.
\end{corollary}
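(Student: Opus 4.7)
The plan is to read off the result directly from Lemma~\ref{lem:how cds affects cycles} together with the observation that $C_\pi$ is a permutation of a finite set. The key invariant is the number of fixed points (equivalently, $1$-cycles) of $C_\pi$, which I will show strictly increases by exactly $2$ with every \cds application, hence cannot grow indefinitely.

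First, I would verify that whenever \cds with context $\{(x,x{+}1),(y,y{+}1)\}$ is applicable to $\pi$, neither $x$ nor $y$ is already a fixed point of $C_\pi$. This is because the pointer $(x,x{+}1)$ appears twice in any permutation --- once as $\rho(x)$ and once as $\lambda(x{+}1)$ --- and if $x$ were an adjacency then these two occurrences would be immediately next to each other in $\pi$, making it impossible for a second pointer $q$ to appear between them in the alternating pattern $p\cdots q\cdots p\cdots q$ required for \cds. The same reasoning applies to $y$. Thus $x$ and $y$ were \emph{not} among the fixed points of $C_\pi$.

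Second, by Lemma~\ref{lem:how cds affects cycles}, the resulting permutation $\pi'$ satisfies the property that $x$ and $y$ \emph{are} fixed by $C_{\pi'}$, while every other element of $\{0,1,\ldots,n\}$ is either in the same cycle of $C_{\pi'}$ as it was in $C_\pi$, or in a cycle obtained by deleting $x$ and/or $y$ from one of the cycles of $C_\pi$. In either case, no element that was previously a fixed point of $C_\pi$ ceases to be a fixed point in $C_{\pi'}$. Combining this with the previous paragraph, the number of fixed points of $C_{\pi'}$ is exactly two more than the number of fixed points of $C_\pi$.

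Third, I would close the argument by noting that $C_\pi$ is a permutation of the $(n+1)$-element set $\{0,1,\ldots,n\}$, so it has at most $n+1$ fixed points. Since each application of \cds increases the fixed-point count by $2$, after at most $\lfloor (n+1)/2 \rfloor$ applications no further \cds step can be made, i.e.\ a \cds fixed point in the sense of the preceding definition has been reached. There is no real obstacle here; the entire work has been done in Lemma~\ref{lem:how cds affects cycles}, and the only subtlety is the small observation that the elements newly fixed by \cds were not already fixed, which is what guarantees the strict increase rather than a merely weak one.
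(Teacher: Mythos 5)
Your proof is correct and follows the same route as the paper: the paper's argument is exactly that, by Lemma~\ref{lem:how cds affects cycles}, each application of \cds creates two new fixed points of $C_{\pi}$, and a permutation of a finite set can have only boundedly many fixed points. You have merely spelled out the detail the paper leaves implicit (that $x$ and $y$ were not already fixed by $C_{\pi}$, which guarantees the count strictly increases), and that verification is sound.
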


Now we consider the following decision problem: 

\begin{quote}
\framebox{
\parbox{.8\textwidth}{
{\bf D.1 \hspace{0.1in} \cds FIXED POINT:}

INSTANCE: A \cds fixed point $\varphi$ and a permutation $\pi \in \s_n$.

QUESTION: Is $\varphi$ a \cds fixed point of $\pi$?}}
\end{quote}

In Example \ref{ex:fixedptdecision} we illustrate the intuitive approach to answering a specific instance of this decision problem.

\begin{example}\label{ex:fixedptdecision} Consider the permutation $\pi =  [4 \ 1 \ 3 \ 2]$
and the \cds fixed point $\varphi = \lbrack 1 \ 2 \ 3 \ 4 \rbrack$. The corresponding instance of the decision problem {\bf D.1} asks whether $\varphi$ is a \cds fixed point of $\pi$. One approach to answering this instance is to determine all the \cds fixed points of $\pi$ by applications of \cds: Applying \cds for the context $\{(1,2),\; (2,3)\}$ to $\pi$ produces the \cds fixed point $\lbrack 4 \ 1 \ 2 \ 3 \rbrack$; applying \cds for the context $\{(1,2),\; (3,4)\}$ produces the \cds fixed point $\lbrack 3 \ 4 \ 1 \ 2 \rbrack$;  applying \cds for the context $\{(2,3),\; (3,4)\}$ produces the \cds fixed point $\lbrack 2 \ 3 \ 4 \ 1 \rbrack$. These are all the \cds fixed points obtainable from applications of \cds to $\pi$. Thus, in this instance of the decision problem {\bf D.1} the answer is ``no", as $\varphi$ is not obtainable from $\pi$ through applications of \cds.
\end{example}

One might ask whether there is a more efficient method for determining an answer to decision problem {\bf D.1}. The special case of this decision problem when $\varphi$ is the identity element of $\s_n$, which we may call the \cds \emph{Sortability} decision problem,  has been given a linear time solution in prior literature \cite{BHR, EHPR}. In this section we shall show  that the more general \cds FIXED POINT decision problem is a linear time problem, implying the previous findings. Here is the definition of the main concept related to decision problem {\bf D.1}: The \emph{strategic pile of a permutation}.

\begin{definition}
Consider a permutation $\pi \in \s_n$.  
If $0$ and $n$ appear in the same cycle of $C_{\pi}$, say as $(0 \ \cdots \ n \ b_1 \ b_2 \ \cdots \ b_r)$, then \emph{strategic pile} of $\pi$, denoted $\textsf{SP}(\pi)$, is the set $\{b_1,b_2, \ldots, b_r\}$. If $0$ and $n$ do not appear in the same cycle of $C_{\pi}$, set $\textsf{SP}(\pi) = \emptyset$.  
\end{definition}

Some immediate consequences of this definition include:
\begin{lemma}\label{anpilelemma}
Fix a permutation $\pi = \lbrack a_1 \ a_2 \ \cdots \ a_n\rbrack$. If \, $0$ and $n$ appear in the same cycle of $C_{\pi}$, then $a_1-1$ and $a_n$ also appear in that same cycle, and $a_1-1, a_n\in \textsf{SP}(\pi)$.
\end{lemma}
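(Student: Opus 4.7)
The plan is to unpack $C_\pi = Y_\pi X$ directly at two specific inputs, namely $n$ and $a_1 - 1$, and read off where these elements land. Because $X(n) = 0$ and $Y_\pi$ sends $0 \mapsto a_n$ in its cycle, I get $C_\pi(n) = a_n$ immediately. Likewise, $X(a_1 - 1) = a_1$, and $Y_\pi$ sends $a_1 \mapsto 0$, so $C_\pi(a_1 - 1) = 0$. These two identities are the engine of the proof.

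Next I would observe that the hypothesis that $0$ and $n$ lie in a common cycle of $C_\pi$ rules out the degenerate boundary cases $a_1 = 1$ and $a_n = n$. Indeed, if $a_1 = 1$ then $C_\pi(0) = Y_\pi(1) = Y_\pi(a_1) = 0$, so $0$ would be a fixed point of $C_\pi$, contradicting $n \ne 0$ being in the same cycle; an analogous computation at $n$ rules out $a_n = n$. Consequently both $a_1 - 1$ and $a_n$ lie in $\{1, \dots, n-1\}$ and in particular are distinct from both $0$ and $n$.

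From here the conclusion is essentially bookkeeping. Since $C_\pi(n) = a_n$, the element $a_n$ is the immediate successor of $n$ in the cyclic decomposition, so writing that common cycle as $(0 \ c_1 \ \cdots \ c_s \ n \ b_1 \ \cdots \ b_r)$, we have $a_n = b_1$. Since $C_\pi(a_1 - 1) = 0$, the element $a_1 - 1$ is the immediate predecessor of $0$ in the same cycle, so $a_1 - 1 = b_r$. By the definition of the strategic pile, both $a_n$ and $a_1 - 1$ belong to $\textsf{SP}(\pi)$.

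I do not anticipate a serious obstacle: the lemma is essentially a direct unwinding of the definition of $C_\pi$, with the only subtlety being the verification that the boundary cases are automatically excluded by the hypothesis. The main care required is to confirm that $a_1 - 1$ and $a_n$ are positioned \emph{between} $n$ and the next occurrence of $0$ in the cycle (so that they qualify as $b_i$'s rather than $c_i$'s), which follows immediately from the cyclic positions forced by $C_\pi(n) = a_n$ and $C_\pi(a_1 - 1) = 0$.
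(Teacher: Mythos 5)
Your proof is correct and follows essentially the same route as the paper's: both rest on the two computations $C_{\pi}(n) = Y_{\pi}(X(n)) = Y_{\pi}(0) = a_n$ and $C_{\pi}(a_1-1) = Y_{\pi}(a_1) = 0$, which place $a_n$ immediately after $n$ and $a_1-1$ immediately before $0$ in their common cycle, so both lie in $\textsf{SP}(\pi)$. Your explicit exclusion of the boundary cases $a_1 = 1$ and $a_n = n$ is a harmless, slightly more careful elaboration of the paper's observation that $C_{\pi}(n) = a_n \neq 0$.
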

\begin{proof}
Recall that $C_{\pi}(n) = Y_{\pi}X(n) = Y_{\pi}(0) = a_n \in \{1,\ldots,n\}$, and in particular $C_{\pi}(n) \neq 0$. Similarly, $C_{\pi}(a_1-1) = Y_{\pi}X(a_1-1) = Y_{\pi}(a_1) = 0$. Thus, if $0$ and $n$ are in the same cycle of $C_{\pi}$, then this cycle is of the form $(0 \ \cdots \ n \ a_n \cdots \ a_1-1)$, and thus $a_1-1,\;a_n\in\textsf{SP}(\pi)$. 
\end{proof}

Lemma \ref{anpilelemma} provides the following characterization of permutations whose strategic pile consists of a single element:
\begin{corollary}\label{oneptstrpile}
The strategic pile $\textsf{SP}(\pi)$ of $\pi = [a_1 \ \cdots \ a_n] \in \s_n$ has exactly one element if and only if $a_1-1 = a_n < n$.
\end{corollary}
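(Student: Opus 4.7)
The plan is to leverage the two explicit computations already embedded in the proof of Lemma \ref{anpilelemma}, namely
\[
C_{\pi}(n) = Y_{\pi}X(n) = Y_{\pi}(0) = a_n
\quad \text{and} \quad
C_{\pi}(a_1-1) = Y_{\pi}X(a_1-1) = Y_{\pi}(a_1) = 0,
\]
the second being valid whenever $a_1 \geq 2$. Read in terms of cycle structure, these identities say that $a_n$ is the immediate successor of $n$, and $a_1-1$ is the immediate predecessor of $0$, in whatever cycle of $C_{\pi}$ contains them. So once we know $0$ and $n$ share a cycle, the strategic pile collapses to a singleton precisely when the successor of $n$ equals the predecessor of $0$, i.e.\ $a_n = a_1 - 1$.

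For the forward direction, suppose $|\textsf{SP}(\pi)| = 1$. By definition, $0$ and $n$ must lie in a common cycle of $C_{\pi}$; writing this cycle as $(0 \cdots n\ b_1)$, the unique pile element $b_1$ is at once $C_{\pi}(n) = a_n$ and $C_{\pi}^{-1}(0) = a_1-1$, giving $a_n = a_1-1$. The bound $a_n < n$ is then automatic, since $a_1 \leq n$ forces $a_n = a_1 - 1 \leq n-1$.

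For the reverse direction, assume $a_1 - 1 = a_n < n$. Because $a_n \geq 1$, we have $a_1 \geq 2$, so the second identity above is applicable. Putting the identities together yields $n \mapsto a_n = a_1 - 1 \mapsto 0$ under $C_{\pi}$, so $0$ and $n$ share a cycle whose segment from $n$ back to $0$ is precisely $n \to (a_1-1) \to 0$. Hence $\textsf{SP}(\pi) = \{a_1 - 1\}$, a singleton.

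There is no real obstacle here; the result is a one-line reading of the cycle structure of $C_{\pi}$ once Lemma \ref{anpilelemma} is in hand. The only subtlety worth flagging is the edge case $a_1 = 1$, but the equation $a_1 - 1 = a_n \geq 1$ automatically excludes it, so no separate case analysis is needed.
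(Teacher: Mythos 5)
Your proof is correct and follows essentially the same route as the paper: both arguments rest on the identities $C_{\pi}(n)=a_n$ and $C_{\pi}(a_1-1)=0$ from Lemma~\ref{anpilelemma}, reading off that a singleton pile forces the successor of $n$ to coincide with the predecessor of $0$, and conversely. Your observation that the bound $a_n<n$ is automatic from $a_1\le n$ is a mild streamlining of the paper's separate case $a_n=n$, but the substance is identical.
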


\begin{proof}
Since $C_{\pi}(n) = a_n$, and since $C_{\pi}(a_1-1) = 0$, it follows that if the strategic pile of $\pi$ is non-empty, then $\{a_1-1,a_n\} \subseteq \textsf{SP}(\pi)$. Then a strategic pile of size one forces $a_1 - 1 = a_n$. On the other hand, if $a_1 - 1 = a_n$, then $C_{\pi}(n) = a_n$ and $C_{\pi}(a_n) = 0$, and so $\textsf{SP}(\pi) = \{a_n\}$.

We must have $a_n < n$, for if $a_n = n$ then $C_{\pi}(a_n) = C_{\pi}(n) = n = a_n $, whence $n$ is not in the same cycle of $C_{\pi}$ as $0$ and $\textsf{SP}(\pi) = \emptyset$. 
\end{proof}

The strategic piles of \cds fixed points other than the identity permutation are particularly simple.
\begin{corollary}\label{strpilecdsfp}
If $\varphi$ is the \cds fixed point $\lbrack k \ \cdots \ n \ 1 \ \cdots \ (k-1)\rbrack$ for $k > 1$, then $\textsf{SP}(\varphi) = \{k-1\}$. 
\end{corollary}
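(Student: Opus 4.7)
The proof is essentially immediate from Corollary \ref{oneptstrpile}, so my plan is just to verify the hypotheses of that corollary for the given $\varphi$ and then read off the conclusion.

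First I would write $\varphi = [a_1 \ a_2 \ \cdots \ a_n]$ with $a_1 = k$ and $a_n = k-1$. The key arithmetic observation is that $a_1 - 1 = k - 1 = a_n$, so the first hypothesis of Corollary \ref{oneptstrpile} is satisfied. I also need $a_n < n$: since $k \le n$, we have $a_n = k - 1 \le n - 1 < n$, which handles this cleanly (and the hypothesis $k > 1$ ensures $a_n = k-1 \ge 1$, so $k-1$ is a legitimate symbol of the permutation).

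Having checked both conditions, Corollary \ref{oneptstrpile} gives $|\textsf{SP}(\varphi)| = 1$. To pin down the single element, I appeal to the explicit formula used in that corollary's proof: since $0$ and $n$ lie in the same cycle of $C_\varphi$, Lemma \ref{anpilelemma} forces $a_n = k - 1 \in \textsf{SP}(\varphi)$. Combined with the singleton conclusion, this gives $\textsf{SP}(\varphi) = \{k - 1\}$ as claimed.

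There is no real obstacle here; the statement is a direct specialization of Corollary \ref{oneptstrpile} to the explicit description of \cds fixed points in Lemma \ref{lem:cdsfixedpoints}. The only thing worth being careful about is keeping track of the distinct roles of $k$ (which can equal $n$) and $k-1$ (which is always a legitimate strategic pile element under the hypothesis $k > 1$), so that the ``$a_n < n$'' clause of Corollary \ref{oneptstrpile} is unambiguously verified.
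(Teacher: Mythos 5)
Your proof is correct and is exactly the argument the paper intends: the corollary is stated immediately after Corollary \ref{oneptstrpile} with no separate proof, being the direct specialization you describe (with $a_1 = k$, $a_n = k-1$, so $a_1 - 1 = a_n < n$, and Lemma \ref{anpilelemma} identifying the single element as $k-1$). Your extra care about $k \le n$ and $k > 1$ is sound and fills in the routine verification the paper leaves implicit.
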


If the strategic pile of a permutation has more than one element, one can also tell some of the elements of the strategic pile beforehand:
\begin{corollary}\label{2elstrpile}
If the permutation $\pi = \lbrack a_1 \ a_2 \ \cdots \ a_n\rbrack$ has a strategic pile with more than one element, then both $a_1-1$ and $a_n \neq a_1-1$ are members of the strategic pile.
\end{corollary}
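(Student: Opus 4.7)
The plan is to derive this corollary as a short consequence of the preceding lemma and corollary, with essentially no new work required. The hypothesis that $\textsf{SP}(\pi)$ has more than one element is in particular the assertion that $\textsf{SP}(\pi) \ne \emptyset$, which by definition means that $0$ and $n$ lie in a common cycle of $C_\pi$. This immediately places us in the setting of Lemma~\ref{anpilelemma}.

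From Lemma~\ref{anpilelemma} I would directly read off that $a_1 - 1$ and $a_n$ both lie in that common cycle of $C_\pi$, and hence both belong to $\textsf{SP}(\pi)$. This takes care of the membership half of the statement with no additional computation.

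The only remaining claim is the distinctness $a_n \ne a_1 - 1$. Here I would argue by contradiction: if $a_n = a_1 - 1$ held, then Corollary~\ref{oneptstrpile} would apply (note $a_n < n$ is automatic since $n$ and $0$ share a cycle and $C_\pi(n) = a_n \ne 0$ forces $a_n < n$ in the single-element case) and would force $\lvert \textsf{SP}(\pi) \rvert = 1$, contradicting the hypothesis that the strategic pile contains more than one element. This closes the argument. I do not anticipate any genuine obstacle; the entire proof is a bookkeeping composition of Lemma~\ref{anpilelemma} with the contrapositive of Corollary~\ref{oneptstrpile}.
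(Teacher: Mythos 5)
Your proposal is correct and follows essentially the same route as the paper, whose entire proof of this corollary reads ``Apply Lemma~\ref{anpilelemma} and Corollary~\ref{oneptstrpile}''; you have simply spelled out that composition. One small wording fix: the reason $a_n<n$ is automatic is that $n$ lies in a nontrivial cycle of $C_\pi$ (the one containing $0$), so $C_\pi(n)=a_n\neq n$ and hence $a_n<n$ --- not because $a_n\neq 0$ --- but this is exactly the observation already recorded at the end of the proof of Corollary~\ref{oneptstrpile}, so nothing substantive is missing.
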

\begin{proof}
Apply Lemma \ref{anpilelemma} and Corollary \ref{oneptstrpile}. 
\end{proof}

\begin{repexample}{ex:computing C_{pi}}
For the permutation $\pi = [4 \ 2 \ 6 \ 7 \ 1 \ 3 \ 5]\in\s_7$, we computed 
that $C_{\pi} = (0 \ 7 \ 5 \ 2 \ 1 \ 4 \ 3)(6)$.  
Thus, $\textsf{SP}(\pi) = \{5,2,1,4,3\}$.
\end{repexample}

\begin{repexample}{ex:fixedptdecision}
For $\pi =  [4 \ 1 \ 3 \ 2]\in \s_4$ we have $C_{\pi} = (0 \ 4 \ 1 \ 3 \ 2)$, and so ${\textsf{SP}}(\pi) = \{ 1,\; 3,\; 2\}$.
\end{repexample}

The following is an important observation about the interaction between strategic piles and the \cds operation.

\begin{lemma}\label{spsubset}
Suppose that $\sigma$ is obtained from $\pi$ by an application of \cds for context $\{(x,x+1),(y,y+1)\}$. Then
$$\textsf{SP}(\pi) \setminus \{x,y\} \subseteq \textsf{SP}(\sigma) \subseteq \textsf{SP}(\pi).$$
\end{lemma}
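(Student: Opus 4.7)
The plan is to prove both inclusions simultaneously by invoking Lemma~\ref{lem:how cds affects cycles}, which pins down exactly how the cycle decomposition of $C_\pi$ differs from that of $C_\sigma$. Since the strategic pile is defined purely in terms of the cycle of $C_\pi$ containing $0$ and $n$, the task reduces to tracking that one cycle through the \cds step. A useful preliminary observation is that the entries $x$ and $y$ satisfy $x,y\in\{1,\ldots,n-1\}$ because the pointers $(x,x+1)$ and $(y,y+1)$ must exist, so neither $x$ nor $y$ coincides with $0$ or $n$.

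I would then split into the two cases dictated by the definition of $\textsf{SP}$. In the first case, suppose $0$ and $n$ lie in different cycles of $C_\pi$, so $\textsf{SP}(\pi)=\emptyset$ and the left-hand inclusion is vacuous. By Lemma~\ref{lem:how cds affects cycles}, any cycle of $C_\pi$ not containing $x$ or $y$ is preserved in $C_\sigma$, and any cycle that does contain $x$ or $y$ is replaced by its subset obtained by removing those entries. Hence the cycle of $C_\sigma$ containing $0$ is a subset of the cycle of $C_\pi$ containing $0$, and likewise for $n$; since those two source cycles are disjoint, the same holds after modification, so $\textsf{SP}(\sigma)=\emptyset$ and the right-hand inclusion holds.

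In the second case, suppose $0$ and $n$ lie in a common cycle $C$ of $C_\pi$, so $\textsf{SP}(\pi)=C\setminus\{0,n\}$. Again by Lemma~\ref{lem:how cds affects cycles}, in $C_\sigma$ the elements of $C\setminus\{x,y\}$ appear in a single cycle $C'$, and this cycle still contains both $0$ and $n$ because these are distinct from $x$ and $y$. Therefore $\textsf{SP}(\sigma)=C'\setminus\{0,n\}=\textsf{SP}(\pi)\setminus\{x,y\}$, which immediately yields both inclusions of the statement.

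The only delicate point is verifying that when $x$ and $y$ lie in the same cycle $C$ of $C_\pi$, removing them yields a single cycle and not two pieces. This is where I would appeal to Christie's cycle-count identity \cite[Lemma~2]{DC}: one \cds step adds exactly two cycles, and those two are already accounted for by the fixed points $(x)$ and $(y)$ created in $C_\sigma$. A split of $C$ would produce a third new cycle and contradict this count; so the deletion must preserve cyclicity, and the identification of $\textsf{SP}(\sigma)$ in the previous paragraph is justified.
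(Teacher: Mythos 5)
Your overall route---reduce everything to Lemma~\ref{lem:how cds affects cycles} and track the cycle of $C_\pi$ containing $0$ and $n$---is exactly the paper's approach (its entire proof is a citation of that lemma), and your preliminary observations that $x,y\in\{1,\ldots,n-1\}$ and that a split of the cycle would contradict Christie's cycle-count are correct and worth recording. There is, however, one genuine error: in your second case you identify $\textsf{SP}(\pi)$ with $C\setminus\{0,n\}$, where $C$ is the cycle containing $0$ and $n$. That is not the definition. Writing the cycle as $(0\ c_1\cdots c_s\ n\ b_1\cdots b_r)$, the strategic pile is only $\{b_1,\ldots,b_r\}$, the arc strictly between $n$ and $0$; the entries $c_1,\ldots,c_s$ lie in $C$ but not in $\textsf{SP}(\pi)$. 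For instance, for $\pi=[3\ 1\ 4\ 2]\in\s_4$ one computes $C_\pi=(0\ 3\ 1\ 4\ 2)$, so $\textsf{SP}(\pi)=\{2\}$ rather than $\{1,2,3\}$. Hence the equalities $\textsf{SP}(\pi)=C\setminus\{0,n\}$ and $\textsf{SP}(\sigma)=C'\setminus\{0,n\}$ on which your case~2 rests are false in general.

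The conclusion survives because the repair is short, but it must be made. Lemma~\ref{lem:how cds affects cycles} says $C'$ is obtained from $C$ by deleting $x$ and/or $y$, and deletion preserves the cyclic order of the remaining entries. Since $0,n\notin\{x,y\}$, the arc of $C'$ running from $n$ to $0$ is precisely the arc of $C$ running from $n$ to $0$ with any occurrences of $x$ or $y$ removed; occurrences of $x$ or $y$ elsewhere in $C$, or outside $C$, do not affect that arc at all. This gives $\textsf{SP}(\sigma)=\textsf{SP}(\pi)\setminus\{x,y\}$, from which both inclusions follow. So replace the set-difference of the whole cycle by this explicit arc-preservation argument. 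Your first case ($\textsf{SP}(\pi)=\emptyset$) is fine as written, since there the pile is determined only by whether $0$ and $n$ share a cycle, and deletions cannot merge cycles.
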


\begin{proof}
This is a result of Lemma~\ref{lem:how cds affects cycles}.
\end{proof}

Thus, there are limitations on the effect of an application of \cds on the strategic pile:
\begin{corollary}[\cds Bounded Removal] \label{cdsbdedremoval}
Applying \cds removes at most two elements from the strategic pile of a permutation.
\end{corollary}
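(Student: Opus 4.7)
The plan is to observe that this corollary is an immediate consequence of Lemma~\ref{spsubset}. Suppose $\sigma$ is obtained from $\pi$ by a single application of \cds with context $\{(x,x+1),(y,y+1)\}$. Lemma~\ref{spsubset} then gives the inclusion $\textsf{SP}(\pi)\setminus\{x,y\}\subseteq\textsf{SP}(\sigma)$. Rearranging, any element of $\textsf{SP}(\pi)$ that fails to lie in $\textsf{SP}(\sigma)$ must belong to $\{x,y\}$, i.e.\ $\textsf{SP}(\pi)\setminus\textsf{SP}(\sigma)\subseteq\{x,y\}$.

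Since $\{x,y\}$ has cardinality at most two, this shows that the number of elements removed from the strategic pile by a single \cds application is at most two. There is no real obstacle here; the corollary is purely a restatement of one half of Lemma~\ref{spsubset}. The only thing worth noting (but not needed for the bound) is that removal is indeed possible: by Lemma~\ref{lem:how cds affects cycles}, the values $x$ and $y$ become fixed points of $C_{\sigma}$ and are excised from their cycle in $C_{\pi}$, so if either was previously in the cycle containing $0$ and $n$ it is no longer there (provided, of course, that $0$ and $n$ remain in a common cycle of $C_{\sigma}$, which is guaranteed by the second inclusion of Lemma~\ref{spsubset}).

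Thus the proof is a one-line deduction from Lemma~\ref{spsubset}, and no further case analysis on the forms of \cds shown in Table~\ref{table:cds} is required.
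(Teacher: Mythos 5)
Your proof is correct and follows exactly the paper's route: the corollary is stated there as an immediate consequence of Lemma~\ref{spsubset}, whose first inclusion $\textsf{SP}(\pi)\setminus\{x,y\}\subseteq\textsf{SP}(\sigma)$ forces $\textsf{SP}(\pi)\setminus\textsf{SP}(\sigma)\subseteq\{x,y\}$, a set of size at most two. No further argument is needed, and your deduction matches the paper's intent precisely.
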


Thus, we have the following relationship between \cds fixed points derivable from a permutation $\pi$, and the strategic pile of $\pi$:
\begin{lemma}\label{cdsfixedptstrpile}
Consider a permutation $\pi \in \s_n$ that is not \cds-sortable. For each \cds fixed point $[k \ \cdots \ n \ 1 \ \cdots \ (k-1)]$ obtainable from $\pi$ by \cds operations, we have $k-1 \in \textsf{SP}(\pi)$.
\end{lemma}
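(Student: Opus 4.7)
The plan is to chain together two facts that have already been set up in the excerpt. Since $\pi$ is not \cds-sortable, any \cds fixed point $\varphi = [k \ \cdots \ n \ 1 \ \cdots \ (k-1)]$ obtainable from $\pi$ must have $k > 1$, because otherwise $\varphi$ would be the identity and $\pi$ would be \cds-sortable. By Corollary \ref{strpilecdsfp}, this forces $\textsf{SP}(\varphi) = \{k-1\}$.

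Next, I would use Lemma \ref{spsubset} inductively along the sequence of \cds moves witnessing that $\varphi$ is obtainable from $\pi$. Write $\pi = \pi_0, \pi_1, \ldots, \pi_m = \varphi$, where each $\pi_{i+1}$ is obtained from $\pi_i$ by a single \cds operation. Lemma \ref{spsubset} gives $\textsf{SP}(\pi_{i+1}) \subseteq \textsf{SP}(\pi_i)$ for every $i$, so a trivial induction on the length of the sequence yields $\textsf{SP}(\varphi) \subseteq \textsf{SP}(\pi)$.

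Combining the two, $\{k-1\} = \textsf{SP}(\varphi) \subseteq \textsf{SP}(\pi)$, hence $k-1 \in \textsf{SP}(\pi)$, as required.

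There is essentially no obstacle here: both the "right-inclusion" half of Lemma \ref{spsubset} and the explicit computation of the strategic pile of a non-identity \cds fixed point in Corollary \ref{strpilecdsfp} are already available, and the argument is just a one-line monotonicity statement along the \cds trajectory from $\pi$ to $\varphi$. The only thing to be careful about is to observe that the hypothesis "not \cds-sortable" is what guarantees $k > 1$, so that Corollary \ref{strpilecdsfp} (which requires $k>1$) actually applies.
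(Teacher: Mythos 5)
Your proof is correct and is exactly the argument the paper intends: the lemma is stated without proof, prefaced by ``Thus,'' as an immediate consequence of the containment $\textsf{SP}(\sigma) \subseteq \textsf{SP}(\pi)$ from Lemma~\ref{spsubset} (iterated along the \cds trajectory) together with Corollary~\ref{strpilecdsfp}, which is precisely your chain. Your added observation that non-sortability forces $k>1$, so that Corollary~\ref{strpilecdsfp} applies, is a worthwhile point of care that the paper leaves implicit.
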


To obtain an efficient solution to decision problem {\bf D.1} we shall show that conversely, each strategic pile element of a permutation $\pi$ represents a \cds fixed point obtainable from applications of \cds to $\pi$. This is the next step.

We divide the solution of the decision problem {\bf D.1}, \cds~{\bf FIXED POINT}, into two parts according to whether the strategic pile of the original permutation is the empty set, or not. As noted before the special case when the strategic pile is the empty set has been treated before using different mathematical structures. In the interest of a self-contained paper, the details for this case are now provided:

\begin{theorem}[\cds-Sortability]\label{swapsortable}
A permutation $\pi \in \s_n$ is \cds-sortable if and only if the strategic pile of $\pi$ is empty (\emph{i.e.}, $0$ and $n$ are in disjoint cycles of $C_{\pi}$).
\end{theorem}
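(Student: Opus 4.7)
The plan is to reduce the theorem to the single observation that \cds operations neither split nor merge cycles of the associated permutation $C_\pi$. This is already packaged in Lemma~\ref{lem:how cds affects cycles}: if $\sigma$ is obtained from $\pi$ by \cds with context $\{(x,x+1),(y,y+1)\}$, then $C_\sigma$ arises from $C_\pi$ by extracting $x$ and $y$ as new fixed points of $C_\sigma$ and removing them from whichever cycles they occupied. Extraction can only shorten a cycle; it cannot split one cycle into two, nor join two cycles into one. Therefore the binary condition ``$0$ and $n$ lie in the same cycle of $C_\pi$'' is preserved in \emph{both} directions by any \cds step; equivalently, the property ``$\textsf{SP}(\pi) = \emptyset$'' is a \cds-invariant.

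For the forward implication, I would first compute $C_{\mathrm{id}}$ for $\mathrm{id} = [1\ 2\ \cdots\ n]$. Using Definition~\ref{defn:permutation product}, one checks that $Y_{\mathrm{id}}X$ fixes every element of $\{0,1,\ldots,n\}$, so $0$ and $n$ are in distinct singleton cycles and $\textsf{SP}(\mathrm{id}) = \emptyset$. If $\pi$ is \cds-sortable, then a sequence of \cds operations carries $\pi$ to $\mathrm{id}$; pulling the invariant ``$\textsf{SP} = \emptyset$'' back along this sequence yields $\textsf{SP}(\pi) = \emptyset$.

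For the reverse implication, assume $\textsf{SP}(\pi) = \emptyset$. By Corollary~\ref{cdsuniversal} (\cds Termination), any maximal sequence of \cds applications starting at $\pi$ must end at some \cds fixed point $\varphi$. By the invariance above, $\textsf{SP}(\varphi) = \emptyset$ as well. But Corollary~\ref{strpilecdsfp} says that for each $k > 1$, the \cds fixed point $[k\ (k+1)\ \cdots\ n\ 1\ \cdots\ (k-1)]$ has strategic pile $\{k-1\} \neq \emptyset$. The only \cds fixed point in $\s_n$ with empty strategic pile is thus the identity, so $\varphi = \mathrm{id}$ and $\pi$ is \cds-sortable.

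The only nontrivial ingredient is the invariance step, and even that is almost formal given Lemma~\ref{lem:how cds affects cycles}: once one grants that \cds acts on the cycle decomposition of $C_\pi$ purely by extracting two entries as fixed points, the theorem collapses to the two bookkeeping facts above (that $\textsf{SP}(\mathrm{id})=\emptyset$, and that every other \cds fixed point has nonempty strategic pile). Lemma~\ref{spsubset} is not strong enough on its own for the forward direction---it only gives $\textsf{SP}(\pi)\subseteq\{x,y\}$ when $\textsf{SP}(\sigma)=\emptyset$---so it is important to appeal directly to the cycle-level Lemma~\ref{lem:how cds affects cycles}, which is where the real work resides.
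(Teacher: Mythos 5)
Your proof is correct and follows essentially the same route as the paper's: the invariance of the condition ``$0$ and $n$ share a cycle of $C_{*}$'' under \cds, derived from Lemma~\ref{lem:how cds affects cycles} (with the implicit but necessary observation that the extracted values $x,y$ lie in $\{1,\ldots,n-1\}$, so neither $0$ nor $n$ is ever removed from its cycle), combined with termination and the classification of the strategic piles of \cds fixed points. Your closing remark that Lemma~\ref{spsubset} alone is too weak for the forward direction, and that one must appeal to the cycle-level statement, is an accurate reading of where the content of the paper's argument actually lies.
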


\begin{proof}
Let $\varphi$ be a \cds fixed point of $\pi$. Because there are no $(0,1)$ 
pointers in an element of $\s_n$, Lemma~\ref{lem:how cds affects cycles} shows that applications of \cds to $\pi$ will not change whether $0$ and $n$ appear in the same cycle of the product $C_{*} = Y_{*}X$. In other words, $0$ and $n$ are in the same cycle of $\varphi$ if and only if they are in the same cycle of $\pi$. It follows that the strategic pile of $\pi$ is empty if, and only if, the strategic pile of $\phi$ is empty. As the strategic pile of a \cds fixed point other than the identity permutation, say $[k \ (k+1) \ \cdots n \ 1 \ \cdots \ (k-1)]$, is $\{ k-1\}$ and thus nonempty, it follows that $\varphi$ is the identity permutation, completing the proof.
\end{proof}

Next we consider the case when the strategic pile of $\pi$ is non-empty. We analyze how to deliberately affect a nonempty strategic pile.

\begin{theorem}[Strategic Pile Removal]\label{pileremovalth}
Let $\pi \in \s_n$ be a permutation with $|\textsf{SP}(\pi)| > 1$. For each pointer $p = (x,x+1)$ corresponding to a strategic pile element $x \in \textsf{SP}(\pi)$, there exists a pointer $q$ such that \cds is applicable to the pointer context $\{p,q\}$, and this application of \cds results in a permutation $\sigma$ for which $\textsf{SP}(\sigma) \subseteq \textsf{SP}(\pi) \setminus \{x\}$.
\end{theorem}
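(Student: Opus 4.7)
The plan is to first reduce the theorem to a pure existence claim, and then prove existence by analyzing the pointer sequence of $\pi$ around the positions of $x$ and $x+1$.

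For the reduction, suppose I can produce any pointer $(y,y+1)$ with $y \neq x$ such that $\{(x,x+1),(y,y+1)\}$ is a legal \cds context in $\pi$, and let $\sigma$ be the resulting permutation. Then Lemma~\ref{lem:how cds affects cycles} makes $x$ a fixed point of $C_\sigma$, and since $x \in \{1,\ldots,n-1\}$ this $1$-cycle is disjoint from the cycle containing $0$ and $n$; combined with the containment $\textsf{SP}(\sigma) \subseteq \textsf{SP}(\pi)$ from Lemma~\ref{spsubset}, this yields $\textsf{SP}(\sigma) \subseteq \textsf{SP}(\pi) \setminus \{x\}$. So the theorem reduces to the existence of one such partner pointer.

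For existence, let $i$ and $j$ be the positions of $x$ and $x+1$ in $\pi$, and let $M$ denote the set of $\pi$-entries strictly between these two positions. Because $x \in \textsf{SP}(\pi)$ lies in a nontrivial cycle of $C_\pi$, it is not an adjacency. A direct reading of the pointer sequence shows that, for $y \neq x$, the pointer $(y,y+1)$ alternates with $(x,x+1)$ exactly when exactly one of $y,y+1$ lies in $M$---with a boundary correction in the case $j < i$, where the right-pointer of $a_j = x+1$ and the left-pointer of $a_i = x$ themselves sit in the middle zone and shift the biconditional at $y = x \pm 1$ to the conditions ``$x+2 \notin M$'' and ``$x-1 \notin M$''. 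Assume for contradiction that no such $y$ exists; then the biconditional $y \in M \iff y+1 \in M$ (with these corrections) propagates across $\{1,\ldots,n\}$ with a single allowed jump at $x$. In the case $i < j$, combined with $x,x+1 \notin M$, this forces $M = \emptyset$, making $a_i = x, a_{i+1} = x+1$ an adjacency---contradicting $x \in \textsf{SP}(\pi)$. In the case $j < i$ with $M \neq \emptyset$, the corrected constraints force $M = \{1,\ldots,x-1\} \cup \{x+2,\ldots,n\}$, hence $|M| = n-2 = i-j-1$, so $j = 1$ and $i = n$, i.e., $a_1 = x+1$ and $a_n = x$; a direct $C_\pi$ computation then yields $C_\pi(n) = x$ and $C_\pi(x) = 0$, so $\textsf{SP}(\pi) = \{x\}$, contradicting $|\textsf{SP}(\pi)| > 1$. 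The degenerate subcase $j < i$ with $M = \emptyset$ (so $i = j+1$) uses $n \geq 3$ (which follows from $|\textsf{SP}(\pi)| > 1$) to guarantee that at least one of $(x-1,x)$ or $(x+1,x+2)$ is a valid pointer, and its unique out-of-middle occurrence gives the alternation directly.

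I expect the case $j < i$ with $M \neq \emptyset$ to be the main source of friction: the two extra boundary contributions to the middle zone force careful rederivation of the alternation condition at $y = x \pm 1$, and the propagation argument only narrows ``no alternation'' down to a single rigid configuration that must then be ruled out by a direct $C_\pi$ computation. This is also where the strict inequality $|\textsf{SP}(\pi)| > 1$ gets used---it rules out exactly this rigid configuration, in which $|\textsf{SP}(\pi)| = 1$.
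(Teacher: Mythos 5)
Your proposal is correct and follows essentially the same route as the paper's proof: reduce to finding one alternating partner pointer (the resulting adjacency/fixed point of $C_\sigma$ removes $x$, and Lemma~\ref{spsubset} gives the containment), then show that if no partner exists, the entries between $x$ and $x+1$ propagate to force either an adjacency (the $x$-before-$x+1$ case) or the rigid configuration $a_1 = x+1$, $a_n = x$ with $\textsf{SP}(\pi)=\{x\}$, contradicting $|\textsf{SP}(\pi)|>1$. Your write-up is simply a more explicit version of the paper's argument, spelling out the boundary corrections at $y = x\pm 1$ and the degenerate adjacent subcase that the paper handles tersely.
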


\begin{proof}
Consider such a $\pi = [a_1 \ \cdots \ a_n] \in \s_n$. By Lemma \ref{2elstrpile}, $(a_1-1), a_n \in \textsf{SP}(\pi)$. 

Now consider any $x \in \textsf{SP}(\pi)$.
Suppose first that $\pi$ has the form $[\cdots (x+1) \ \cdots \ x \ \cdots]$. Consider all pointers appearing between $x+1$ and $x$ in $\pi$. If they each appear twice in this portion of the inverse image notation for $\pi$, then this region would include $x+2$ and $x-1$, as well as $x+3$ and $x-2$, and so on. In other words, we would have $a_1 = x+1$ and $a_n = x$. However, this would force $\textsf{SP}(\pi) = \{a_n\}$, which contradicts the assumption that the strategic pile contains more than one element. Thus there is some pointer $p$ so that $p$ and $(x,x+1)$ appear in alternating order in $\pi$. If we apply \cds to $\pi$ for this context, then the resulting $\sigma$ will have the form $[\cdots \ x \ (x+1) \ \cdots]$, and the adjacency $x(x+1)$ will force $x \not\in \textsf{SP}(\sigma)$.

Now suppose that $\pi$ has the form $[\cdots \ x \ \cdots \ (x+1) \ \cdots]$. If $x$ and $x+1$ are not adjacent in $\pi$, then we can argue analogously to the previous case.

If, on the other hand, $\pi = [\cdots \ x \ (x+1) \ \cdots]$, then $C_{\pi}(x) = x$, and so in fact $x \not\in \textsf{SP}(\pi)$.
\end{proof}
  
\begin{example}\label{ex:reverseorder}
The permutation $\pi = [2n \ (2n-1) \ \cdots \ 2 \ 1] \in \s_{2n}$ has strategic pile
$\textsf{SP}(\pi) = \{1,\, 3,\, \cdots,\, (2n-1) \}$.  For each $x$ and $y$ distinct elements of $\textsf{SP}(\pi)$, the corresponding pointers $p=(x,\, x+1)$ and $q = (y,\, y+1)$ are \emph{not} in the context $\dots p \dots q \dots p \dots q$. Thus, any application of \cds removes at most one strategic pile element. If there is more than one strategic pile element, then Theorem \ref{pileremovalth} implies that for any strategic pile element $x$ there is an application of \cds that does \emph{not} remove $x$ from the strategic pile. 
\end{example}
Our next result shows that the conclusion of Example \ref{ex:reverseorder} holds more generally for any permutation with strategic pile larger than $2$.

\begin{theorem}[Strategic Pile Retention]\label{pilesurvivalth}
Let $\pi \in \s_n$ be a permutation with $|\textsf{SP}(\pi)| > 1$. For each element $x \in \textsf{SP}(\pi)$, there exist pointers $\{p,q\}$ such that \cds is applicable to $\pi$ for context $\{p,q\}$, and this application of \cds results in a permutation $\sigma$ for which $x \in \textsf{SP}(\sigma)$.
\end{theorem}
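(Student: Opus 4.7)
Proof plan. Let $\pi \in \s_n$ satisfy $|\textsf{SP}(\pi)| > 1$ and fix $x \in \textsf{SP}(\pi)$. The goal is to produce a \cds-applicable context $\{(u, u+1), (v, v+1)\}$ with $u \neq x$ and $v \neq x$; Lemma~\ref{spsubset} will then yield $x \in \textsf{SP}(\pi) \setminus \{u,v\} \subseteq \textsf{SP}(\sigma)$ in the resulting $\sigma$.

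Pick any $y \in \textsf{SP}(\pi) \setminus \{x\}$, nonempty by hypothesis. The case analysis in the proof of Theorem~\ref{pileremovalth} shows that $y$ is not part of an adjacency of $\pi$, so one may assume (the reverse orientation is symmetric) that $\pi = [\cdots (y+1) \cdots y \cdots]$. Let $\mathcal{P}$ be the set of pointers in alternating order with $(y, y+1)$, equivalently those appearing exactly once in the region between these occurrences of $y+1$ and $y$. The argument of Theorem~\ref{pileremovalth} already gives $\mathcal{P} \neq \emptyset$, since otherwise the region exhausts all entries and forces $\textsf{SP}(\pi) = \{y\}$ via Corollary~\ref{oneptstrpile}, contradicting $|\textsf{SP}(\pi)| > 1$.

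The crux is to show $\mathcal{P} \neq \{(x, x+1)\}$. When $|\textsf{SP}(\pi)| = 2$, so $\textsf{SP}(\pi) = \{x, y\}$, this is immediate: were $(x, x+1), (y, y+1)$ in alternating order, applying \cds with that context would yield $\sigma$ with $\textsf{SP}(\sigma) \subseteq \emptyset$, yet Lemma~\ref{lem:how cds affects cycles} keeps $0$ and $n$ in a common cycle of $C_\sigma$, so $\textsf{SP}(\sigma) \neq \emptyset$, a contradiction. For general $|\textsf{SP}(\pi)| \geq 2$, I would replay the expansion argument of Theorem~\ref{pileremovalth} allowing the single excepted pointer $(x, x+1)$: beginning with the boundary pointers adjacent to $y+1$ and $y$ and iteratively absorbing the entries $y+2, y+3, \dots$ and $y-1, y-2, \dots$, the exception can interrupt the propagation in at most one place. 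A case analysis on whether $x > y$ or $x < y$ (together with the degenerate subcases $y = x \pm 1$) shows that the resulting constraints either still force $a_1 = y+1$ and $a_n = y$ (hence $\textsf{SP}(\pi) = \{y\}$ by Corollary~\ref{oneptstrpile}), or pin down $\pi$'s global structure tightly enough that the cycle of $C_\pi$ through $0$ and $n$ places $x$ strictly between $0$ and $n$ rather than in the segment after $n$; then $x \notin \textsf{SP}(\pi)$, again contradicting the hypothesis.

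Given this claim, choose $p = (z, z+1) \in \mathcal{P} \setminus \{(x, x+1)\}$; by construction $z \neq x$, and $z \neq y$ because the two copies of $(y, y+1)$ bracket the region rather than lie inside it. The context $\{(y, y+1), (z, z+1)\}$ is \cds-applicable, and Lemma~\ref{spsubset} yields $\textsf{SP}(\pi) \setminus \{y, z\} \subseteq \textsf{SP}(\sigma)$; since $x \notin \{y, z\}$, we conclude $x \in \textsf{SP}(\sigma)$. The main obstacle is the expansion-with-one-exception step: the $|\textsf{SP}(\pi)| = 2$ case admits the slick cycle-theoretic argument above, but the general case requires careful bookkeeping as the single excepted pointer threads through the iterated absorption and the eventual computation of $C_\pi$ that forces $x$ out of the strategic pile.
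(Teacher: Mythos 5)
Your reduction is the right one: by Lemma~\ref{spsubset} (really by Lemma~\ref{lem:how cds affects cycles}) it suffices to exhibit an applicable context $\{(u,u+1),(v,v+1)\}$ with $u,v\neq x$, and your treatment of $|\textsf{SP}(\pi)|=2$ is essentially correct -- though note that Lemma~\ref{spsubset} alone only gives $\textsf{SP}(\sigma)\subseteq\textsf{SP}(\pi)$, so the inclusion $\textsf{SP}(\sigma)\subseteq\emptyset$ you want there must come from Lemma~\ref{lem:how cds affects cycles} (which makes $x$ and $y$ fixed points of $C_\sigma$), combined with the observation that $C_\sigma(n)=\sigma_n\neq 0$ forces $\textsf{SP}(\sigma)\neq\emptyset$ whenever $0$ and $n$ share a cycle.

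The gap is in the general case, and it sits exactly where you flag it. You fix a \emph{single} $y\in\textsf{SP}(\pi)\setminus\{x\}$ and assert that the set $\mathcal{P}$ of pointers alternating with $(y,y+1)$ is not contained in $\{(x,x+1)\}$; the ``expansion-with-one-exception'' step that is supposed to establish this is only described, not carried out, and it is the entire content of the theorem. Be aware that your claim is strictly stronger than what the paper proves: the paper's argument only shows that \emph{some} $k\in\textsf{SP}(\pi)\setminus\{j\}$ admits a partner pointer $(a,a+1)$ with $a\neq j$, and it obtains this by assuming the negation \emph{for every such $k$ simultaneously} and deriving a contradiction from the interaction of two or three strategic pile elements $k,\ell,m$ (its Subcases 1--3). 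Your per-$y$ hypothesis (``the unique partner of this one $(y,y+1)$ is $(x,x+1)$'') is much weaker, so you cannot borrow that contradiction; you must instead pin down the global structure of $\pi$ (which entries lie between the two copies of $(y,y+1)$, how the exceptional pointer splits the complement into the blocks $\beta$ and $\gamma$, both orientations of $y$ versus $y+1$, and whether the exception interrupts the upward or the downward absorption chain) and then actually compute enough of the cycle of $C_\pi$ through $0$ and $n$ to conclude $x\notin\textsf{SP}(\pi)$. In the one configuration I can check quickly this conclusion does hold (the trajectory from $n$ to $0$ under $C_\pi$ stays inside $\{x+1,\dots,n\}\cup\{y\}$, so $x$ cannot be on the strategic pile), but several other configurations remain, and nothing in your sketch rules out a configuration in which $\mathcal{P}=\{(x,x+1)\}$ with $x\in\textsf{SP}(\pi)$ -- in which case your argument would need to switch to a different $y$, i.e., revert to the existential statement the paper proves. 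As written, the proposal is a plan for a proof rather than a proof.
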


\begin{proof} 
If $|\textsf{SP}(\pi)| = 2$, then the result follows from Theorems~\ref{swapsortable} and~\ref{pileremovalth}.

Now suppose that $|\textsf{SP}(\pi)| > 2$, and that $x\in \textsf{SP}(\pi)$. We want to apply \cds to $\pi$ in a context that would result in a permutation $\sigma$ for which we have $x \in \textsf{SP}(\sigma)$. Let $p$ denote the pointer $(x,x+1)$. We must show that there is an application of \cds which does not result in the adjacency $\lbrack\cdots x\, x+1 \cdots\rbrack$.

First consider the case where $\textsf{SP}(\pi)$ has exactly three elements, say $i$, $j$ and $k$. We may assume that in $C_{\pi}$ we have in the cycle starting with $n$, the following: $(n \, i\, j\, k\, 0\, \dots)$. It follows that $\pi$ has one the two forms
\[
  \pi = \lbrack (k+1) \cdots  k\, (j+1) \cdots j\, (i+1) \cdots i\rbrack
\]
or
\[
  \pi = \lbrack (k+1) \cdots  j\, (i+1) \cdots k\, (j+1) \cdots i\rbrack
\]
In the latter case any two of the pointers $p$, $q$ and $r$ provides a context for an application of \cds that results in a permutation for which the remaining strategic pile term of $\pi$ is still a strategic pile element.

In the former case no pair of the pointers $p = (i,\, i+1)$, $q = (j,\, j+1)$ or $r=(k,\, k+1)$ provides a context for an application of \cds. By Theorem~\ref{pileremovalth} there is for each of $i$, $j$ or $k$ a pointer context such that applying \cds for this context produces a permutation which does not have the corresponding term on its strategic pile. Thus, suppose for example that we wish to have $i$ a member of the strategic pile of a permutation resulting from applying \cds to $\pi$. By Theorem~\ref{pileremovalth} we fix a pointer $s = (a,\, a+1)$ such that the pointer pair $\{q,\, s\}$ provides a context for an application of \cds to $\pi$. Thus in this case $\pi$ has one of the following four forms
\[
  \pi = \lbrack (k+1) \cdots a\, \cdots k\, (j+1) \cdots (a+1)\, \cdots j\, (i+1) \cdots i\rbrack,
\]
\[
  \pi = \lbrack (k+1) \cdots (a+1)\, \cdots k\, (j+1) \cdots a\, \cdots j\, (i+1) \cdots i\rbrack,
\]
\[
  \pi = \lbrack (k+1) \cdots  k\, (j+1) \cdots a\, \cdots  j\, (i+1) \cdots (a+1)\,\cdots i\rbrack,
\]
or
\[
  \pi = \lbrack (k+1) \cdots  k\, (j+1) \cdots (a+1)\, \cdots  j\, (i+1) \cdots a\,\cdots i\rbrack.
\]
We illustrate the outcome of \cds in the second case, leaving the remaining three to the reader. The result of applying \cds to the pointer pair $\{q,\, s\}$ in $\pi$ results in
\[
  \sigma = \lbrack (k+1)\,\cdots {\mathbf{\cdots j}}\, (j+1) \cdots a\, {\mathbf{(a+1)\, \cdots k}}\, (i+1) \cdots i\rbrack,
\]
which no longer has $j$ in its strategic pile, but still has $i$ (and $k$) in its strategic pile.

Now consider the case where $\textsf{SP}(\pi)$ has more than three elements. Consider an arbitrary element $j$ of $\textsf{SP}(\pi)$. By Theorem~\ref{pileremovalth} there is for each strategic pile element $k\neq j$ a pointer $p=(a,\, a+1)$ in correct context with $q = (k,\, k+1)$, such that applying \cds for the pair $\{p,\, q\}$ results in a permutation $\sigma$ which no longer has $k$ in $\textsf{SP}(\sigma)$. We must show that for some $k\neq j$ in $\textsf{SP}(\pi)$ a pointer $p$ for which $a\neq j$ can be found.

{\flushleft{\tt Hypothesis: }} Suppose that on the contrary for each $k\neq j$ that is a member of the strategic pile of $\pi$, the only pointer $p$ for which there is a context for applying \cds, is $p = (j,\, j+1)$. This context is one of $\dots p\, \dots q\, \dots p\, \dots q\, \dots$ or  $\dots q\, \dots p\, \dots q\, \dots p\, \dots$.

{\flushleft{\bf Subcase 1:}} Assume that $\pi$ is of the form $\lbrack \cdots (j+1) \cdots j\rbrack$. Consider two strategic pile elements $k$ and $\ell$ positioned as follows in $C_{\pi}$: $(n,\, j,\, k,\, \ell,\, \cdots)$. Then, by our hypothesis, $\pi$ is of the form
\[
  \lbrack \cdots k\, (j+1)\,\cdots\,(k+1)\cdots\, j \rbrack.
\]
Including also the strategic pile element $\ell$ we see that $\pi$ must be of the form
\[
  \lbrack \cdots k\, (j+1)\,\cdots\,\ell\,(k+1)\cdots\, j \rbrack.
\]
But then $\ell+1$ cannot be positioned before $j+1$ in $\pi$, since this would violate the hypothesis that only $j$ from the strategic pile provides a pointer in correct context with the pointer associated with $k$ for an application of \cds. Similarly, $\ell+1$ cannot be positioned to the right of $k+1$ in $\pi$. But then the pointers $(\ell,\, \ell+1)$ and $(j,\, j+1)$ are not in correct context for an application of \cds, contradicting the hypothesis. It follows that Subcase 1 cannot occur.

{\flushleft{\bf Subcase 2:}} Assume that $\pi$ is of the form $\lbrack (j+1) \cdots j  \cdots\rbrack$.

Consider two strategic pile elements $k$ and $\ell$ positioned as follows in $C_{\pi}$: $(n\,\cdots\, \ell\, k\, j\, 0\, \cdots)$. Then $\pi$ must be of the form
\[
  \lbrack (j+1)\, \cdots\, j\, (k+1)\,\cdots\rbrack.
\]
By the hypothesis on pointer contexts, it follows that $\pi$ is of the form
\[
  \lbrack (j+1)\, \cdots\, k\, \cdots\, j\, (k+1)\,\cdots\rbrack,
\]
and thus of the form
\[
  \lbrack (j+1)\, \cdots\, k\, (\ell+1)\, \cdots\, j\, (k+1)\,\cdots\rbrack.
\]
Now we consider the placement of the term $\ell$ in $\pi$. By hypothesis it cannot be left of $k$, and it cannot be right of $(k+1)$. Thus, $\ell$ must also be positioned between $j$ and $(j+1)$, contradicting the hypothesis that only the pointers $(j,\,j+1)$ and $(\ell,\, \ell+1)$ support a \cdr operation that eliminates $\ell$ from the strategic pile. It follows that Subcase 2 also cannot occur.

{\flushleft{\bf Subcase 3:}} Assume that $\pi$ is of the form $\lbrack \cdots\, (j+1) \cdots j  \cdots\rbrack$ or $\lbrack \cdots\, j \cdots (j+1)  \cdots\rbrack$, with other entries in each of the $\cdots$ -marked regions. We consider the second case, leaving the first case to the reader. Consider two additional elements $k$ and $\ell$ that occur in the strategic pile as follows $(n\, \cdots \, k\, j\, \ell\, \cdots \, 0\, \cdots)$. Then $\pi$ has the form
\[
  \lbrack \cdots j\,(k+1)\cdots (j+1)\, \cdots \rbrack
\]
and so by hypothesis $k$ is either to the left of $j$, or to the right of $(j+1)$ in $\pi$. We discuss the former possibility, leaving the latter to the reader. Now $\pi$ is of the form
\[
  \lbrack \cdots j\,(k+1)\cdots \ell\, (j+1)\, \cdots \,k\,\cdots \rbrack
\]
By our hypothesis, $\ell+1$ must be located between $(j+1)$ and $k$. This in turn implies that there is an element $m$ of the strategic pile so that in $\textsf{C}_{\pi}$ we have $(n\, \cdots \, k\, j\, \ell\, m\, \cdots \, 0\, \cdots)$, whence $\pi$ has the form
\[
  \lbrack \cdots j\,(k+1)\cdots\, \ell\, (j+1)\, \cdots m\, (\ell+1)\,\cdots \, k\, \cdots \rbrack.
\]
Consider the position of $m+1$ in $\pi$: It cannot be to the left of $\ell$, or to the right of $\ell+1$, as then the pointer pair $\{(\ell,\,\ell+1),\, (m,\,m+1)\}$ would provide a context for applying \cds to eliminate $\ell$ from the strategic pile, contradicting the hypothesis that $(j,\, j+1)$ is the only such pointer. But then for the strategic pile element $m$ the pointer pair $\{(j,\, j+1),\,(m,\,m+1)\}$ does not provide a context for applying \cds to eliminate $m$ from the strategic pile of $\pi$, contradicting our hypothesis.

The hypothesis leads in each case to a contradiction, whence the hypothesis is false. This completes the proof of the theorem.
\end{proof}

The Strategic Pile Removal Theorem and the Strategic Pile Retention Theorem imply the following linear time solution to the \cds FIXED POINT decision problem.

\begin{theorem}[\cds Fixed Point]\label{strategicpilecorollary}
Fix integers $n > k > 1$ and $\pi \in \s_n$. The permutation $\varphi = [k \ \cdots \ n \ 1 \ \cdots \ (k-1)]$ is a \cds fixed point for $\pi$ if and only if $k-1 \in \textsf{SP}(\pi)$.
\end{theorem}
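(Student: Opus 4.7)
My plan is to establish the two implications separately. The forward direction---if $\varphi = [k \ \cdots \ n \ 1 \ \cdots \ (k-1)]$ is reachable from $\pi$ by \cds operations, then $k-1 \in \textsf{SP}(\pi)$---is essentially Lemma~\ref{cdsfixedptstrpile}: since $k > 1$, the fixed point $\varphi$ is not the identity, hence $\pi$ is not \cds-sortable and the lemma gives $k - 1 \in \textsf{SP}(\pi)$ directly.

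For the converse, assume $k-1 \in \textsf{SP}(\pi)$. My strategy is to build, by induction on the number of steps, a sequence $\pi = \pi_0, \pi_1, \pi_2, \ldots$ of permutations related by applications of \cds in which $k-1$ stays in the strategic pile at every stage. Given $\pi_i$ that is not yet a \cds fixed point and satisfies $k - 1 \in \textsf{SP}(\pi_i)$, I choose $\pi_{i+1}$ as follows. If $|\textsf{SP}(\pi_i)| > 1$, then Theorem~\ref{pilesurvivalth} applied with $x = k-1$ supplies an applicable \cds context whose output $\pi_{i+1}$ still has $k-1$ in its strategic pile. If $|\textsf{SP}(\pi_i)| = 1$, the pile is forced to be $\{k-1\}$; I take $\pi_{i+1}$ to be the result of any applicable \cds move and argue separately that $k-1$ survives.

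This ``$|\textsf{SP}| = 1$'' case is the main obstacle, because Theorem~\ref{pilesurvivalth} does not cover it. To handle it, I would invoke the observation already used in the proof of Theorem~\ref{swapsortable}: no pointer of the form $(0,1)$ occurs in any element of $\s_n$, so a \cds application cannot alter whether $0$ and $n$ lie in the same cycle of $C_{(\cdot)} = Y_{(\cdot)}X$. Therefore \cds cannot send a non-empty strategic pile to the empty pile. Combined with Lemma~\ref{spsubset}, which forces $\textsf{SP}(\pi_{i+1}) \subseteq \textsf{SP}(\pi_i) = \{k-1\}$, this pins down $\textsf{SP}(\pi_{i+1}) = \{k-1\}$, so $k - 1$ indeed survives.

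Finally, by Corollary~\ref{cdsuniversal} (\cds Termination), the sequence halts at some \cds fixed point $\varphi'$, and by construction $k - 1 \in \textsf{SP}(\varphi')$. The \cds fixed points are classified by Lemma~\ref{lem:cdsfixedpoints}, and Corollary~\ref{strpilecdsfp} pins down $[k \ \cdots \ n \ 1 \ \cdots \ (k-1)]$ as the \emph{unique} \cds fixed point whose strategic pile contains $k - 1$. Hence $\varphi' = \varphi$, which completes the converse and the theorem.
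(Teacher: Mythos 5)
Your proof is correct and follows the route the paper intends: the paper presents the theorem as an immediate consequence of the Strategic Pile Removal and Retention Theorems, and your argument is precisely the expansion of that claim, with Lemma~\ref{cdsfixedptstrpile} (via Lemma~\ref{spsubset} and Corollary~\ref{strpilecdsfp}) handling the forward direction and Theorem~\ref{pilesurvivalth} driving the converse down to the fixed point classified by Lemma~\ref{lem:cdsfixedpoints}. You also correctly patch the one detail the paper glosses over---Retention only applies when $|\textsf{SP}(\pi_i)|>1$---by observing that, since no $(0,1)$ pointer exists, a \cds move cannot empty a non-empty strategic pile, so the singleton pile $\{k-1\}$ persists.
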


\begin{example}\label{ex:FPDecision} Here is a specific instance of the Fixed Point Decision Problem: Given are permutations $\varphi = \lbrack 3\, 4\, \, 5\, 6\, 7\, 1\, 2\lbrack$ and $\pi = \lbrack 4\, 6\, 2\, 7\, 1\, 3 \, 5\rbrack.$ Is $\varphi$ a \cds fixed point of $\pi$? By Theorem~\ref{strategicpilecorollary}, $\varphi$ is a \cds fixed point of $\pi$ if, and only if, $2\in\textsf{SP}(\pi)$. Calculation of $\textsf{C}_{\pi}$ shows that $\textsf{SP}(\pi) = \{3,\, 4,\, 5\}$. Since $2$ is not an element of $\textsf{SP}(\pi)$, it follows that $\varphi$ is not a \cds fixed point of $\pi$.
\end{example}

\begin{repexample}{ex:computing C_{pi}}
For $\pi = [4 \ 2 \ 6 \ 7 \ 1 \ 3 \ 5]\in\s_7$ we showed earlier that $\textsf{SP}(\pi) = \{5,2,1,4,3\}$, and so the \cds fixed points of $\pi$ are the permutations
$\lbrack 6 \ 7 \ 1 \ 2 \ 3 \ 4 \ 5\rbrack$,  $\lbrack 5 \ 6 \ 7 \ 1 \ 2 \ 3 \ 4 \rbrack$, $\lbrack 4 \ 5 \ 6 \ 7 \ 1 \ 2 \ 3 \rbrack$, $\lbrack 3 \ 4 \ 5 \ 6 \ 7 \ 1 \ 2 \rbrack$, and
$\lbrack 2 \ 3 \ 4 \ 5 \ 6 \ 7 \ 1 \rbrack$,
\end{repexample}

\begin{repexample}{ex:fixedptdecision}
For $\pi =  [4 \ 1 \ 3 \ 2]\in \s_4$ we have ${\textsf{SP}}(\pi) = \{ 1,\; 3,\; 2\}$. Thus the \cds fixed points of $\pi$ are $\lbrack 4  \ 1 \ 2 \ 3 \rbrack$, $\lbrack 3 \ 4  \ 1 \ 2 \rbrack$ and $\lbrack 2 \ 3 \ 4  \ 1 \rbrack$.
\end{repexample}

\section{Invariants and structural aspects of \cds}\label{sec:cds implications} 

Although the \cds Inevitability Theorem is formulated in terms of \cds-sortability, the fundamental result is that the emptiness or not of the strategic pile of a permutation is invariant under applications of \cds.  
\begin{theorem}[\cds Inevitability Theorem]\label{cdsinevitable}
If $\pi\in \s_n$ is a \cds-sortable permutation, then regardless of the order in which \cds operations are applied, the final fixed point reached is the identity permutation of $\s_n$.
\end{theorem}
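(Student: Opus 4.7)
The plan is to show that the property ``strategic pile is empty'' is invariant under \cds, and then to combine this with the termination of \cds and the known shape of \cds fixed points.

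First I would appeal to Lemma~\ref{lem:how cds affects cycles} to observe that when \cds is applied to $\pi$ with context $\{(x,x+1),(y,y+1)\}$, the only cycles of $C_\pi$ that change are those containing $x$ or $y$: in the resulting permutation $\pi'$, the cycle decomposition of $C_{\pi'}$ is obtained from that of $C_\pi$ by excising $x$ and $y$ from their respective cycles (and making them into fixed points of $C_{\pi'}$). Since pointers in an unsigned element of $\s_n$ are of the form $(i,i+1)$ with $1 \le i \le n-1$, we always have $x,y \in \{1,\ldots,n-1\}$, so neither $0$ nor $n$ is ever removed from its cycle. Consequently, $0$ and $n$ lie in the same cycle of $C_\pi$ if and only if they lie in the same cycle of $C_{\pi'}$. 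In other words, $\textsf{SP}(\pi) = \emptyset$ precisely when $\textsf{SP}(\pi') = \emptyset$, so emptiness of the strategic pile is a \cds-invariant of the permutation.

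Next I would use Corollary~\ref{cdsuniversal} (\cds Termination) to know that, no matter in which order legal \cds operations are chosen, after finitely many applications one arrives at some \cds fixed point $\varphi$. By the invariance established above, if $\pi$ is \cds-sortable, then by Theorem~\ref{swapsortable} its strategic pile is empty, and hence so is the strategic pile of every permutation reached along the way, including $\varphi$.

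Finally, by Lemma~\ref{lem:cdsfixedpoints} the \cds fixed point $\varphi$ must be one of the cyclic shifts $[k \ (k+1) \ \cdots \ n \ 1 \ \cdots \ (k-1)]$ for some $k \ge 1$. Corollary~\ref{strpilecdsfp} tells us that for $k > 1$ this fixed point has strategic pile $\{k-1\} \neq \emptyset$. Since we have already shown $\textsf{SP}(\varphi) = \emptyset$, we must have $k = 1$, that is, $\varphi$ is the identity permutation. There is essentially no obstacle here beyond correctly identifying the invariant; the main point is simply to track that \cds cannot displace $0$ or $n$ from their cycles in $C_\pi$, which is exactly what Lemma~\ref{lem:how cds affects cycles} delivers.
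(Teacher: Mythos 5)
Your proof is correct and follows essentially the same route as the paper, whose one-line proof cites Corollary~\ref{cdsuniversal}, Lemma~\ref{spsubset}, and Theorem~\ref{swapsortable}; you have simply written out that chain in full, deriving the invariance of ``empty strategic pile'' directly from Lemma~\ref{lem:how cds affects cycles} (which is what Lemma~\ref{spsubset} packages) and finishing via Corollary~\ref{strpilecdsfp}. No gaps.
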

\begin{proof}
Apply Corollary \ref{cdsuniversal}, Lemma \ref{spsubset} and Theorem \ref{swapsortable}.
\end{proof}

Closely related to the \cds Inevitability Theorem is what we call the \cds Duration Theorem below:  Recall the permutation $C_{\pi} = Y_{\pi}X$ of Definition~\ref{defn:permutation product}, and set 
$$c(\pi) = \text{\ number of cycles  in the disjoint cycle decomposition of } C_{\pi}.$$

The result of \cite[Theorem 4]{DC}  can be stated in our context  in terms of $c(\pi)$ as follows:

\begin{theorem}[\cds Duration {\cite[Theorem 4]{DC}}]\label{cdssteps}
For each $\pi \in \s_n$ that is not a \cds fixed point, the 
number of applications of \cds resulting in a fixed point is
$$\begin{cases}
\frac{n+1-c(\pi)}{2} & \text{if } \pi \text{ is \cds-sortable, and}\\
\frac{n+1-c(\pi)}{2} - 1 & \text{otherwise.}
\end{cases}$$
\end{theorem}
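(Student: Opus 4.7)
The plan is to track the cycle-count statistic $c(\pi)$ through the \cds process and to pin down its value at every possible terminal state. Three ingredients are needed: how $c$ evolves under a \cds step, the value of $c$ at each terminal fixed point, and which terminal state is actually reached.

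\textbf{Step 1 (how $c$ evolves).} Lemma~\ref{lem:how cds affects cycles} describes the exact effect of a single \cds step with context $\{(x,x+1),(y,y+1)\}$ on the cycle decomposition of $C_\pi$: cycles of $C_\pi$ avoiding $\{x,y\}$ persist unchanged, the entries $x$ and $y$ become fixed points of $C_{\pi'}$, and the cycles containing $x$ or $y$ are modified only by deleting those entries. Because the two occurrences of the pointer $(x,x+1)$ must be separated by an occurrence of $(y,y+1)$ for \cds to be applicable, neither $x$ nor $y$ can be an adjacency of $\pi$; equivalently, neither is already a fixed point of $C_\pi$. A routine case check also rules out the degenerate configuration in which $x$ and $y$ form a $2$-cycle together. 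Combined with \cite[Lemma~2]{DC}, this yields the clean accounting identity $c(\pi') = c(\pi) + 2$.

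\textbf{Step 2 (values of $c$ at terminal states).} A direct computation of $Y_\pi X$ shows that $C_{\text{id}}$ is the identity on $\{0,1,\ldots,n\}$, so $c(\text{id}) = n+1$. For a non-identity \cds fixed point $\varphi = [k\ (k+1)\ \cdots\ n\ 1\ \cdots\ (k-1)]$ with $k>1$, unwinding $Y_\varphi X$ produces the single $3$-cycle $(0\ n\ k-1)$ together with fixed points on the remaining $n-2$ entries, giving $c(\varphi) = n-1$ independently of $k$.

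\textbf{Step 3 (counting the steps).} By Corollary~\ref{cdsuniversal}, finitely many \cds steps $s$ suffice to reach a fixed point. The \cds Inevitability Theorem~\ref{cdsinevitable} forces this terminal fixed point to be the identity when $\pi$ is \cds-sortable, and Theorem~\ref{swapsortable} forces it to be a non-identity fixed point otherwise. Iterating Step~1 gives $c(\text{terminal}) = c(\pi) + 2s$, and the two cases of Step~2 then deliver $s = (n+1-c(\pi))/2$ in the sortable case and $s = (n-1-c(\pi))/2 = (n+1-c(\pi))/2 - 1$ in the non-sortable case.

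The only genuine technical content is in Step~1: confirming that the cycle count increases by exactly $+2$ (never $+1$ or $0$) on every step, which amounts to checking that the applicability hypothesis of \cds excludes the configurations where $x$ or $y$ sits in a singleton or where $x,y$ together exhaust a $2$-cycle. Once this is in hand, everything else is direct arithmetic on top of Lemma~\ref{lem:how cds affects cycles} and the terminal-state calculations.
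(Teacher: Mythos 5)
Your proof is correct, but note that the paper does not actually prove this theorem: it is imported wholesale from Christie \cite[Theorem 4]{DC}, with the surrounding text only translating Christie's statement into the language of $c(\pi)$. So what you have written is a genuine, self-contained derivation built from the paper's own machinery, which is more than the paper supplies. The skeleton is sound: the $+2$ increment per step, the terminal values $c(\mathrm{id})=n+1$ and $c(\varphi)=n-1$ for a non-identity fixed point $\varphi=[k\ \cdots\ n\ 1\ \cdots\ (k-1)]$ (consistent with Corollary~\ref{strpilecdsfp}, whose single-element strategic pile corresponds to the $3$-cycle $(0\ n\ k-1)$), and the use of Theorem~\ref{swapsortable} to pin down which terminal value occurs. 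The one place where you wave your hands --- ruling out the configuration where $x$ and $y$ form a $2$-cycle of $C_\pi$ --- does check out: $C_\pi(x)=y$ means $y$ immediately precedes $x+1$ in $\pi$ and $C_\pi(y)=x$ means $x$ immediately precedes $y+1$, so the four pointer occurrences are forced into the pattern $p\,q\cdots q\,p$ (or $q\,p\cdots p\,q$) rather than the alternating pattern $p\cdots q\cdots p\cdots q$, and \cds is not applicable with that context. (The paper in fact asserts the $+2$ cycle increment directly, again citing \cite[Lemma 2]{DC}, in the sentence preceding Lemma~\ref{lem:how cds affects cycles}, so your Step~1 could alternatively lean on that.) A final remark: your argument also shows, as a byproduct, that the number of steps is independent of the order in which applicable \cds moves are chosen, which is implicitly part of what the theorem claims and is worth stating explicitly.
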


In \cite{DC} the \cds Duration Theorem was stated as a minimality result as sorting there permitted block interchanges different from \cds.

Another sortability invariant is obtained by considering the relationship between $C_{\pi}$ and $C_{\pi^{-1}}$. Using the fact that the cycle notation for a permutation $\sigma^{-1}$ can be obtained from that of $\sigma$ by reversing the data in each cycle, the following result follows from 
the definition of $C_{\pi}$.

\begin{lemma}\label{findingcpiinv}
For a permutation $\pi \in \s_n$, the permutation $C_{\pi^{-1}}$ is obtained from $C_{\pi}^{-1}$, the inverse of $C_{\pi}$, by replacing each $i \in \{1,\ldots,n\}$ with $\pi(i)$.
\end{lemma}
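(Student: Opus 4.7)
The plan is a direct computation from Definition~\ref{defn:permutation product}, using two elementary facts: (i) inverting a permutation reverses each of its cycles (as the hint in the lemma indicates), and (ii) the operation of ``replacing each $i\in\{1,\ldots,n\}$ with $\pi(i)$'' in cycle notation is exactly conjugation by $\pi$, extended by $\pi(0)=0$. Since conjugation is a group homomorphism, it will distribute over the product $C_\pi^{-1}=X^{-1}Y_\pi^{-1}$, and the two factors transform transparently.

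First I would compute $C_{\pi^{-1}}$ from the definition. Writing $\pi=[a_1\ \cdots\ a_n]$ with $a_i=\pi^{-1}(i)$, the inverse image notation of $\pi^{-1}$ is $[\pi(1)\ \pi(2)\ \cdots\ \pi(n)]$, so
$$Y_{\pi^{-1}} = (\pi(n)\ \pi(n-1)\ \cdots\ \pi(1)\ 0)\quad\text{and}\quad C_{\pi^{-1}} = Y_{\pi^{-1}}X.$$

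Next I would compute $C_\pi^{-1}$ by reversing cycles:
$$X^{-1} = (0\ n\ n-1\ \cdots\ 1),\qquad Y_\pi^{-1} = (0\ \pi^{-1}(1)\ \pi^{-1}(2)\ \cdots\ \pi^{-1}(n)),$$
so that $C_\pi^{-1}=X^{-1}Y_\pi^{-1}$. Now I would apply the relabeling $i\mapsto\pi(i)$ (with $0\mapsto 0$) termwise to each factor. The cycle of $X^{-1}$ becomes $(0\ \pi(n)\ \pi(n-1)\ \cdots\ \pi(1))$, which is the same cyclic word as $Y_{\pi^{-1}}$. The cycle of $Y_\pi^{-1}$ becomes $(0\ 1\ 2\ \cdots\ n) = X$. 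Multiplying the two transformed factors gives $Y_{\pi^{-1}}X = C_{\pi^{-1}}$, as desired.

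The only potential pitfall is making sure the relabeling really is multiplicative, i.e., that it commutes with composition of permutations; this is precisely the observation that conjugation by the extended $\pi$ is a homomorphism, and so I would state that explicitly (or simply observe that in cycle notation, if $\sigma\tau$ sends $i$ to $j$, then conjugation sends $\pi(i)$ to $\pi(j)$, yielding $(\pi\sigma\pi^{-1})(\pi\tau\pi^{-1})$). No estimates or case analysis are needed, and the argument is essentially one line of cycle manipulation once the two factors are written down correctly.
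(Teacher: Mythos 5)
Your proposal is correct and follows the same route the paper sketches: invert $C_\pi = Y_\pi X$ by reversing each factor's cycle, observe that the relabeling $i\mapsto\pi(i)$ (with $0\mapsto 0$) is conjugation and hence distributes over the product, and check that the two conjugated factors become $Y_{\pi^{-1}}$ and $X$ respectively. You have simply supplied the details that the paper leaves to the reader, and the computation checks out against the paper's worked example.
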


\begin{repexample}{ex:computing C_{pi}}
Consider $\pi = [4 \ 2 \ 6 \ 7 \ 1 \ 3 \ 5] \in \s_7$. From this inverse image presentation of $\pi$ we find that $\pi(1) = 5$, $\pi(2) = 2$, $\pi(3) = 6$, $\pi(4) = 1$, $\pi(5) = 7$, $\pi(6) = 3$ and $\pi(7) = 4$. Thus, $\pi^{-1} = [5 \ 2 \ 6 \ 1 \ 7 \ 3 \ 4]$.
Earlier we computed $C_{\pi} = (0 \ 7 \ 5 \ 2 \ 1 \ 4 \ 3)(6)$. Thus $C_{\pi}^{-1} = (0 \ 3 \ 4 \ 1 \ 2 \ 5 \ 7)(6)$, and $C_{\pi^{-1}} = (0 \ 6 \ 1 \ 5 \ 2 \ 7 \ 4)(3)$.
\end{repexample}

From the characterization in Lemma \ref{findingcpiinv} we obtain the fact that \cds-sortability of a permutation $\pi$ is invariant with respect to the group $\s_n$'s inverse operation. 

\begin{corollary}\label{inversecdssortable}
A permutation $\pi$ is \cds-sortable if and only if $\pi^{-1}$ is \cds-sortable. 
\end{corollary}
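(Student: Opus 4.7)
The approach is to reduce everything to Theorem~\ref{swapsortable}: a permutation is \cds-sortable precisely when $0$ and $n$ lie in different cycles of its associated permutation $C_\bullet$. So the task becomes showing that the property ``$0$ and $n$ lie in the same cycle of $C_\pi$'' is preserved under $\pi\leftrightarrow\pi^{-1}$, and Lemma~\ref{findingcpiinv} is tailor-made for this comparison.

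I would unpack the lemma in two stages. First, $C_\pi^{-1}$ shares its cycle partition with $C_\pi$ (each cycle is merely traversed in reverse), so whether $0$ and $n$ share a cycle is unchanged in passing from $C_\pi$ to $C_\pi^{-1}$. Second, $C_{\pi^{-1}}$ arises from $C_\pi^{-1}$ by relabeling each $i\in\{1,\ldots,n\}$ as $\pi(i)$ while leaving $0$ fixed; since this relabeling is a bijection of $\{0,1,\ldots,n\}$ fixing $0$, the cycle of $C_{\pi^{-1}}$ containing $0$ is precisely the image, under this relabeling, of the cycle of $C_\pi^{-1}$ containing $0$.

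The remaining piece is to track where $n$ lives under this correspondence. Writing $\pi = [a_1 \ \cdots \ a_n]$, the relabeling sends $a_n = \pi^{-1}(n)$ to $n$, so $n$ belongs to the cycle of $C_{\pi^{-1}}$ containing $0$ if and only if $a_n$ belongs to the cycle of $C_\pi^{-1}$ (equivalently, $C_\pi$) containing $0$. Directly from the definition of $C_\pi$ one has $C_\pi(n) = Y_\pi X(n) = Y_\pi(0) = a_n$, so $n$ and $a_n$ always share a cycle of $C_\pi$. Chaining these equivalences yields that $0$ and $n$ share a cycle of $C_\pi$ iff they share a cycle of $C_{\pi^{-1}}$, and invoking Theorem~\ref{swapsortable} on each side finishes the proof. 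The only subtlety is to keep $0$ separate from the relabeled elements and to pivot correctly through the identity $C_\pi(n) = a_n$; once that is in place, the symmetry is essentially forced.
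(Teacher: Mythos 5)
Your proof is correct and follows essentially the same route as the paper's: both invoke Lemma~\ref{findingcpiinv}, track the element $a_n = \pi^{-1}(n)$ through the relabeling $i \mapsto \pi(i)$ (which sends $a_n$ to $n$ and fixes $0$), and conclude via Theorem~\ref{swapsortable}. The only cosmetic difference is that you chain equivalences to get the biconditional directly, whereas the paper proves one implication and appeals to the symmetry $(\pi^{-1})^{-1} = \pi$.
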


\begin{proof}
Because $(\pi^{-1})^{-1} = \pi$, we need only prove one direction of the biconditional statement.

Suppose that $\pi = [a_1 \ \cdots \ a_n] \in \s_n$ is not \cds-sortable. Thus $0$ and $n$ appear in the same cycle of $C_{\pi}$ by Theorem~\ref{swapsortable}. Recall that $a_n = \pi^{-1}(n)$ must be in this cycle as well. These values $\{0,n,a_n\}$ necessarily appear in the same cycle of $C_{\pi}^{-1}$, although in a different cycle order. To obtain $C_{\pi^{-1}}$ from $C_{\pi}^{-1}$, we replace each $i \in \{1,\ldots,n\}$ by $\pi(i)$, and thus $\{0,\pi(n),\pi(a_n) = n\}$ are in the same cycle of $C_{\pi^{-1}}$. Therefore $\pi^{-1}$ is not \cds-sortable, completing the proof.
\end{proof}

\begin{repexample}{ex:computing C_{pi}} 
Although the non-emptiness the strategic pile of a permutation is an invariant of the inverse operation of the group $\s_n$, the cardinality of the strategic pile is not an invariant. For example, the strategic pile of $\pi = [4 \ 2 \ 6 \ 7 \ 1 \ 3 \ 5] \in \s_7$ is $\{1,\; 2,\; 3,\; 4,\; 5\}$, while the strategic pile of $\pi^{-1} = [5 \ 2 \ 6 \ 1 \ 7 \ 3 \ 4]$ is $\{ 4\}$.
\end{repexample}

Another \cds invariant emerges from considering parity features:

\begin{lemma}[\cds Parity Invariance]\label{cdsparityinvariance}
Let $\pi$ be a permutation with either of the following properties:
\begin{enumerate}
\item $\pi(j) \bmod 2=0 \iff j \bmod 2=0$ for all $j$, or
\item $\pi(j) \bmod 2=1 \iff j \bmod 2=0$ for all $j$.
\end{enumerate}
Let $\sigma$ be obtained by applying \cds to $\pi$ with some context. Then $\sigma$ satisfies the same property above.
\end{lemma}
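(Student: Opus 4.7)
The plan is to verify the statement directly by position-shift bookkeeping on each of the four forms of \cds listed in Table~\ref{table:cds}. The unifying idea is that properties $(1)$ and $(2)$ can both be expressed uniformly: setting $\epsilon \in \{0,1\}$ appropriately, the hypothesis reads $\pi(j) \equiv j + \epsilon \pmod 2$ for all $j$, or equivalently, in the inverse-image notation $\pi = [a_1\,\cdots\,a_n]$, the entry at position $i$ satisfies $a_i \equiv i + \epsilon \pmod 2$. Consequently two entries have the same parity if and only if they lie at positions of the same parity, so the property is preserved by any rearrangement that sends every entry to a new position of the same parity as its old one. Thus it suffices to show that \cds induces only even position shifts on each entry.

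With this reduction in place, I would fix the first form of Table~\ref{table:cds} and, writing $a = |\alpha|$, $b = |\beta|$, $c = |\gamma|$, compute the new position of each entry (the four pivots $x, x+1, y, y+1$ together with the entries of $\alpha, \beta, \gamma$). The flanking constraints already encode useful parity information: because $x$ and $x+1$ have opposite parities, parity consistency applied to $\pi$ forces their positional distance $a+b+2$ to be odd, so $a+b$ is odd; the analogous observation for the opposite-parity pair $y, y+1$, separated by $b+c+2$, gives $b+c$ odd, and hence $a+c$ even. A direct calculation then yields the per-entry shifts (e.g.\ $b+c+1$ for every entry of $\alpha$ and for $y$, $c-a$ for every entry of $\beta$, $-(a+b+1)$ for every entry of $\gamma$ and for $x+1$, and $0$ for $x$ and $y+1$), each of which is even under the parity conditions above. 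Hence $\sigma$ satisfies the same property $(\epsilon)$ as $\pi$.

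The remaining three forms of Table~\ref{table:cds} are handled by the same procedure. In each case the flanking of the two pointer pairs forces exactly the conditions ``$a+b$ odd'' and ``$b+c$ odd,'' and a short computation shows that the displacement of every block and every pivot is again even. The main obstacle is entirely clerical: keeping accurate track of positions across four slightly different layouts and confirming the shift formulas in each. Since no new ideas are needed after the first case, I would write that case in full and indicate that the others follow by the same computation.
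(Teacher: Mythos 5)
Your proof is correct and follows essentially the same route as the paper, which simply asserts that the lemma ``follows from a case analysis of the parities of $x$ and $y$ in the possibilities outlined in Table~\ref{table:cds}.'' Your reformulation --- reduce both properties to $a_i \equiv i + \epsilon \pmod 2$, note that it suffices for every entry to be displaced by an even amount, and use the opposite parities of $x,x+1$ and of $y,y+1$ to force $a+b$ and $b+c$ odd --- is just a clean, fully worked-out way of carrying out exactly that verification, and the displacement formulas you give check out in all four cases.
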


\begin{proof}
This follows from a case analysis of the parities of $x$ and $y$ in the possibilities outlined in Table~\ref{table:cds}.
\end{proof}

To determine if a permutation $\pi \in \s_n$ is \cds-sortable, we only need construct the permutation $C_{\pi}$ and check whether $0$ and $n$ appear in the same cycle, a linear time computation. However, in practical situations there are a number of easily recognizable features of a permutation $\pi$ that makes it unnecessary to calculate a cycle of the permutation $C_{\pi}$:

\begin{corollary}\label{position1}
If $\pi = [a_1 \ a_2 \ \cdots \ a_n] \in \s_n$ satisfies $a_1 = 1$ or $a_n = n$, then $\pi$ is \cds-sortable.
\end{corollary}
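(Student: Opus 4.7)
The plan is to reduce to Theorem~\ref{swapsortable}, which tells us that $\pi$ is \cds-sortable if and only if $0$ and $n$ lie in disjoint cycles of $C_{\pi}$. So it suffices to show that the hypothesis $a_1 = 1$ or $a_n = n$ forces one of $0$, $n$ to be a singleton (fixed) cycle of $C_{\pi}$, since a singleton cycle containing one of them cannot contain the other.

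The key computations are already embedded in the proof of Lemma~\ref{anpilelemma}. Recall that $C_{\pi}(n) = Y_{\pi}(X(n)) = Y_{\pi}(0) = a_n$, and $C_{\pi}(a_1 - 1) = Y_{\pi}(X(a_1 - 1)) = Y_{\pi}(a_1) = 0$. First I would handle the case $a_n = n$: then $C_{\pi}(n) = n$, so $(n)$ is a $1$-cycle in the disjoint cycle decomposition of $C_{\pi}$; in particular $0$ is not in the cycle containing $n$, so $\textsf{SP}(\pi) = \emptyset$. Next I would handle the case $a_1 = 1$: then $a_1 - 1 = 0$ and the identity $C_{\pi}(a_1 - 1) = 0$ becomes $C_{\pi}(0) = 0$, so $(0)$ is a $1$-cycle of $C_{\pi}$; again $n$ is not in the cycle containing $0$, so $\textsf{SP}(\pi) = \emptyset$.

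In either case, Theorem~\ref{swapsortable} yields that $\pi$ is \cds-sortable, completing the proof. Alternatively, one could argue by contradiction using Lemma~\ref{anpilelemma} directly: if $\pi$ were not \cds-sortable, then $0$ and $n$ would share a cycle of $C_{\pi}$, forcing $a_1 - 1, a_n \in \textsf{SP}(\pi) \subseteq \{1, \ldots, n-1\}$, which rules out both $a_1 = 1$ (else $a_1 - 1 = 0 \notin \textsf{SP}(\pi)$) and $a_n = n$ (else $a_n = n \notin \textsf{SP}(\pi)$). This is essentially the contrapositive of the direct approach.

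There is no real obstacle here; the statement is a near-immediate consequence of Theorem~\ref{swapsortable} combined with the elementary evaluations of $C_{\pi}$ at $n$ and $a_1 - 1$ from Lemma~\ref{anpilelemma}. The only point requiring a moment's care is remembering that the strategic pile, by definition, excludes $0$ and $n$ themselves, which is exactly what makes the singleton-cycle argument go through cleanly.
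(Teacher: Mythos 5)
Your proof is correct and follows essentially the same route as the paper: the paper likewise observes that $a_1 = 1$ makes $C_{\pi}$ fix $0$ and $a_n = n$ makes $C_{\pi}$ fix $n$, so that $0$ and $n$ lie in separate cycles, and then invokes Theorem~\ref{swapsortable}. Your version just spells out the evaluations $C_{\pi}(n) = a_n$ and $C_{\pi}(a_1-1) = 0$ a bit more explicitly.
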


\begin{proof}
If $a_1 = 1$, then the permutation $C_{\pi}$ fixes $0$. If $a_n = n$, then the permutation $C_{\pi}$ fixes $n$. In either case, $0$ and $n$ necessarily appear in separate cycles of the disjoint cycle decomposition of $C_{\pi}$.
\end{proof}

\begin{corollary} \label{parityswitch}\
\begin{enumerate}\renewcommand{\labelenumi}{(\alph{enumi})}
\item If $n$ is odd and $\pi \in \s_n$ satisfies
$\lbrack \pi(j) \bmod 2 = 0 \iff j \bmod 2 = 0\rbrack$,
then $\pi$ is \cds-sortable.
\item If $\pi \in \s_n$ satisfies
$\lbrack \pi(j) \bmod 2 = 1 \iff j \bmod 2 = 0\rbrack$,
then $n$ is even and $\pi$ is not \cds-sortable.
\end{enumerate}
\end{corollary}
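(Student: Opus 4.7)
My plan is to leverage the \cds Parity Invariance lemma (Lemma \ref{cdsparityinvariance}) together with the \cds Inevitability Theorem (Theorem \ref{cdsinevitable}): since each of the two parity properties is preserved by applications of \cds, any fixed point reached from $\pi$ by \cds inherits the same parity condition as $\pi$. Thus in each case it suffices to decide which \cds fixed points, as described in Lemma \ref{lem:cdsfixedpoints}, are compatible with the given condition.

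For part (a), assume $n$ is odd and $\pi$ satisfies condition (1). By Corollary \ref{cdsuniversal}, applying \cds repeatedly terminates at a fixed point $\varphi$; by Lemma \ref{cdsparityinvariance}, $\varphi$ still satisfies (1). Lemma \ref{lem:cdsfixedpoints} says $\varphi = [k \ (k+1)\ \cdots\ n\ 1\ \cdots\ (k-1)]$ for some $k\in\{1,\ldots,n\}$, so $\varphi(j)=j-k+1$ when $j\ge k$ and $\varphi(j)=j+n-k+1$ when $j<k$. I would check parities in both ranges: on $\{j\ge k\}$, condition (1) forces $k-1$ to be even, i.e., $k$ odd; on $\{j<k\}$, it forces $n-k+1$ to be even, i.e., $n-k$ odd. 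With $n$ odd, these are incompatible unless the second range is empty, i.e., $k=1$. Hence $\varphi$ is the identity, so $\pi$ is \cds-sortable.

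For part (b), I would first prove $n$ is even by a counting argument: under condition (2), $\pi$ restricts to a bijection from the even elements of $\{1,\ldots,n\}$ onto the odd elements, forcing $\lfloor n/2\rfloor = \lceil n/2\rceil$. Next, suppose toward contradiction that $\pi$ is \cds-sortable. By the \cds Inevitability Theorem, some (in fact every) sequence of \cds applications starting from $\pi$ terminates at the identity permutation $\varepsilon$. But $\varepsilon(j)=j$, so $\varepsilon$ satisfies $\varepsilon(j)\bmod 2=0\iff j\bmod 2=0$, which directly contradicts condition (2). Since Lemma \ref{cdsparityinvariance} guarantees that (2) would be preserved along the sorting, $\pi$ cannot be \cds-sortable.

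The main obstacle is the parity bookkeeping in part (a): one must check both the unshifted block $\{j\ge k\}$ and the wrap-around block $\{j<k\}$, and the conclusion that only $k=1$ survives depends crucially on the hypothesis that $n$ is odd (if $n$ were even, even-length cyclic shifts could still preserve parity and extra fixed points would become admissible). Part (b) is essentially immediate once $n$ even is established, since the identity tautologically violates the parity-reversing condition.
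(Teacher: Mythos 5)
Your proof is correct, but part (a) follows a genuinely different route from the paper's. The paper proves (a) by a direct computation on $C_{\pi}$: under the parity-preserving hypothesis, the cycle of $C_{\pi}$ containing $0$ consists only of even values, so the odd value $n$ cannot lie in it, and sortability follows immediately from the strategic pile criterion of Theorem~\ref{swapsortable}. You instead combine termination (Corollary~\ref{cdsuniversal}), parity invariance (Lemma~\ref{cdsparityinvariance}), and the explicit classification of \cds fixed points (Lemma~\ref{lem:cdsfixedpoints}), then check by parity bookkeeping on the two blocks $j\ge k$ and $j<k$ that only $k=1$ is compatible with an odd $n$; your bookkeeping is right (and correctly handles the fact that the displayed word is in inverse image notation, since both parity conditions are invariant under taking inverses). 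The paper's argument is shorter and stays entirely within the $C_{\pi}$ machinery, while yours is more elementary, avoids Theorem~\ref{swapsortable} for part (a), and in fact yields the slightly stronger conclusion that \emph{every} maximal sequence of \cds moves sorts $\pi$ (which of course also follows from Theorem~\ref{cdsinevitable} once sortability is known). For part (b) the two proofs use the same key lemma; your counting argument for the evenness of $n$ (condition (2) forces a bijection between the even and odd elements of $\{1,\dots,n\}$) is cleaner and more direct than the paper's derivation via the adjacency of $1$ and $n$ in the terminal fixed point, and your contradiction with the identity satisfying the parity-preserving rather than parity-reversing condition is essentially the same as the paper's observation that $1$ would sit in an even position of any fixed point reached.
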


\begin{proof} \
\begin{enumerate}\renewcommand{\labelenumi}{(\alph{enumi})}
\item The cycle containing $0$ in the disjoint cycle decomposition of $C_{\pi}$ will contain only even values. If $n$ is odd, then it will not appear in this cycle, and thus $\pi$ is \cds-sortable by Theorem~\ref{swapsortable}.
\item The entry $1$ appears in an even position of $\pi$. By Lemma~\ref{cdsparityinvariance} repeated applications of \cds will lead to a fixed point $\varphi$ with $1$ in an even numbered position, and so $\varphi$ is not the identity and so $\pi$ is not \cds-sortable. The parity of $n$ follows from Lemma~\ref{cdsparityinvariance} and the fact that $1$ and $n$ are adjacent in $\varphi$.
\end{enumerate}
\end{proof}

\begin{example}
The permutation $\pi = \lbrack 1 \ 4 \ 7 \ 2 \ 5 \ 8 \ 3 \ 6\rbrack$ is an element of $\s_8$ and satisfies the biconditional statement of Corollary~\ref{parityswitch}(a), and is \cds-sortable by Corollary~\ref{position1}.
The permutation $\mu = \lbrack 5 \ 6 \ 7 \ 2 \ 1 \ 8 \ 3 \ 4\rbrack$ is an element of $\s_8$ and satisfies the biconditional statement of Corollary~\ref{parityswitch}(a), and is not \cds-sortable by Corollary~\ref{oneptstrpile}.  Thus the hypothesis that $n$ is odd in Corollary~\ref{parityswitch} is necessary, but not sufficient. Note however that $\mu$ has several adjacencies, and from the point of view of \cds sorting is essentially the permutation $\lbrack 4 \ 2 \ 1 \ 5 \ 3 \rbrack$ for which $n = 5$ is odd. 
\end{example}

\begin{repexample}{ex:reverseorder}
The permutation $[n \ (n-1) \ \cdots \ 2 \ 1] \in \s_n$ is \cds-sortable if and only if $n$ is odd. For if $n$ is even, then Lemma \ref{cdsparityinvariance} implies that in any \cds fixed point reached, $1$ is in an even position, and thus the fixed point is not the identity. Thus, \cds-sortability implies $n$ is odd. Conversely, if $n$ is odd then Corollary \ref{parityswitch} implies that this permutation is \cds-sortable.
\end{repexample}

Our next example, Example \ref{ex:sortabilityandcomposition}, illustrates that although the \cds-sortable subset of the group $\s_n$ is closed under taking inverses, it is not closed under the group operation of $\s_n$, and so does not form a subgroup of $\s_n$.
\begin{example}\label{ex:sortabilityandcomposition}
Consider the permutations $\pi = \lbrack 1 \ 4 \ 2 \ 5 \ 3\rbrack$ and $\sigma = \lbrack 3 \ 2 \ 4 \ 1 \ 5\rbrack$. Each is \cds-sortable. The composition $\sigma\circ\pi = \lbrack 2 \ 4 \ 5 \ 1 \ 3 \rbrack$ has strategic pile $\{ 1,\ 2,\ 3\}$ which is not empty, and thus $\sigma\circ\pi$ is not \cds-sortable.
\end{example}

\section{\cds and games}\label{section:cdsgames}

When an unsigned permutation $\pi$ is not \cds-sortable, the resulting fixed point reached after successive \cds operations may depend on the order in which these sorting operations are applied. This phenomenon suggests several combinatorial sorting games.

Towards defining one class of such games, fix a permutation $\pi$ and a set \textsf{F} of \cds fixed points of $\pi$. Then the two-person game
$\textsf{CDS}(\pi,\textsf{F})$
is played as follows.

\begin{quote}
Player ONE applies a \cds operation to $\pi$ to produce $\pi_1$. Player TWO applies a \cds operation to $\pi_1$ to produce $\pi_2$, and so on. The players alternate \cds moves in this manner until a fixed point $\varphi$ is reached. Player ONE wins if $\varphi \in \textsf{F}$. Otherwise, player TWO wins.
\end{quote}

Each application of a \cds move to a permutation that is not a \cds fixed point reduces the number of non-adjacencies. Elements of $\s_n$ have at most $n-1$ non-adjacencies, and thus there are always at most $n-1$ moves in the game for $\pi \in \s_n$.
Therefore, for each $n$, these games are of finite length, and none of them ends in a draw. A classical theorem of Zermelo \cite{Z} implies that for each choice of $\pi$ and \textsf{F}, some player has a winning strategy in the game $\textsf{CDS}(\pi,\textsf{F})$:

\begin{theorem}[Zermelo]\label{fundthfingames}
For any finite win-lose game of perfect information between two players, one of the players has a winning strategy.
\end{theorem}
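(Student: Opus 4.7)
The plan is to prove Zermelo's theorem by the now-standard backward induction argument on the game tree, adapted to the setting of the paper (two players, no draws, finite length, perfect information).

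First I would model an arbitrary game of the stated form as a finite rooted tree $T$. The root is the starting position; internal nodes represent non-terminal positions and are labeled by the player whose turn it is to move; edges leaving a node correspond to the legal moves at that position, and each edge leads to the resulting position. Leaves represent terminal positions and come labeled with the winner, ONE or TWO. Because the game is finite (in our application, by Corollary \ref{cdsuniversal}, after at most $n-1$ \cds moves), every play corresponds to a root-to-leaf path in $T$, and $T$ has finite depth. Perfect information is captured by the fact that each player knows the current node, and no draws means every leaf is decisively labeled.

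Next I would define, by backward induction on the nodes of $T$ (working from leaves toward the root), a labeling $W \colon V(T) \to \{\text{ONE},\text{TWO}\}$ as follows. For a leaf, $W$ is the recorded winner. For an internal node $v$ whose outgoing edges go to children $v_1,\dots,v_k$, set $W(v) = \text{ONE}$ if either (i) ONE moves at $v$ and $W(v_i) = \text{ONE}$ for some $i$, or (ii) TWO moves at $v$ and $W(v_i) = \text{ONE}$ for every $i$; otherwise $W(v) = \text{TWO}$. Since $T$ is finite, this recursion terminates and assigns exactly one label to every node.

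Finally I would extract a winning strategy for the player labeled at the root. Say $W(\text{root}) = \text{ONE}$. Define a strategy $\Sigma$ for ONE as follows: at any node $v$ where it is ONE's turn and $W(v)=\text{ONE}$, the defining condition guarantees a child $v_i$ with $W(v_i) = \text{ONE}$; let $\Sigma(v) = v_i$. A straightforward induction on the length of the remaining play shows that if ONE follows $\Sigma$, then the current node retains label ONE after every move (ONE's move by construction, TWO's move because every child of a TWO-labeled-for-ONE node at TWO's turn is itself labeled ONE). Hence the terminal leaf reached carries label ONE, i.e., ONE wins. The symmetric argument handles the case $W(\text{root})=\text{TWO}$.

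I expect no serious obstacle: the only subtle point is ensuring that the recursion is well-founded, which follows from the finite length of the game, and that the implicit tree model genuinely captures ``perfect information'' games without draws, which is exactly the hypothesis. Everything else is bookkeeping on finite trees.
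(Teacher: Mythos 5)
Your backward-induction argument is the standard, correct proof of Zermelo's theorem; the paper itself supplies no proof, citing \cite{Z} instead, so there is no competing approach to compare against. The labeling recursion and the verification that the extracted strategy preserves the label along any play are both sound, and finiteness of the game tree (finite length plus finitely many legal moves at each position) makes the recursion well-founded as you note.
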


This fact suggests the following decision problem:
\begin{quote}
\framebox{
\parbox{.8\textwidth}{
{\bf D.2 \hspace{0.1in} FIXED POINT SORTING GAME:}

INSTANCE: A positive integer $n$, a permutation $\pi$, \and a set \textsf{F} of \cds-fixed points.

QUESTION: Does ONE have a winning strategy in the game $\textsf{CDS}(\pi,\textsf{F})$?}}
\end{quote}

The complexity of the Fixed Point Sorting Game decision problem is currently unknown and appears to depend strongly on the structure of the strategic pile, as well as the structure of the set \textsf{F} of \cds fixed points that are favorable to player ONE. When the strategic pile has at most $2$ elements, the Fixed Point Sorting Game decision problem can be dealt with by reformulating the \cds-Sortability result in terms of a fixed point sorting game.

\begin{repptheorem}{swapsortable}\label{swapsortablereformulated}
Let $n$ be a positive integer. Let $\textsf{F} = \{[1 \ 2 \ \cdots \ n]\}$. For any $\pi \in \s_n$, the following statements are equivalent.
\begin{enumerate}\renewcommand{\labelenumi}{(\alph{enumi})}
\item ONE has a winning strategy in the game $\textsf{CDS}(\pi,\textsf{F})$.
\item No cycle in $C_{\pi}$ contains both $0$ and $n$.
\item The strategic pile of $\pi$ is empty.
\end{enumerate}
\end{repptheorem}

Since criterion (b) can be verified in time linear in $n$, deciding whether ONE has a winning strategy in this game is of linear time complexity.

More generally we have the following proposition:
\begin{proposition}\label{smallpile}
Let $n$ as well as $\pi\in \s_n$ be such that $\vert\textsf{SP}(\pi)\vert\le 2$. For each nonempty set $\textsf{F}$ of \cds fixed points of $\pi$, ONE has a winning strategy in the game $\textsf{CDS}(\pi,\textsf{F})$.
\end{proposition}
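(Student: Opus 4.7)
The plan is to case-split on $|\textsf{SP}(\pi)|\in\{0,1,2\}$ and, in each case, either exhibit an opening move for ONE or verify that every play is automatically a win. Before starting the cases I would record a single invariance that permeates all of them: by Lemma~\ref{spsubset} combined with Theorem~\ref{swapsortable}, whether $\textsf{SP}(\pi)$ is empty is preserved under \cds, since emptiness is equivalent to $0$ and $n$ lying in different cycles of $C_\pi$, a property not changed by any \cds move. Consequently, once the strategic pile is nonempty it stays nonempty, and a singleton pile remains that same singleton for the rest of the game.

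The two small cases are then immediate. If $\textsf{SP}(\pi)=\emptyset$, Theorem~\ref{cdsinevitable} forces every play to terminate at the identity; since $\textsf{F}$ is a nonempty set of fixed points reachable from $\pi$ we must have $\textsf{F}=\{[1\ 2\ \cdots\ n]\}$, and ONE wins without doing anything. If $\textsf{SP}(\pi)=\{x\}$, the invariance above forces the pile to equal $\{x\}$ throughout the game, so by Theorem~\ref{strategicpilecorollary} the only reachable \cds fixed point is $\varphi_x:=[(x+1)\ \cdots\ n\ 1\ \cdots\ x]$; hence $\textsf{F}=\{\varphi_x\}$ and again ONE wins automatically.

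The substantive case is $\textsf{SP}(\pi)=\{x,y\}$, where by Theorem~\ref{strategicpilecorollary} the reachable fixed points are exactly $\varphi_x$ and $\varphi_y$. If $\textsf{F}=\{\varphi_x,\varphi_y\}$ there is nothing to do. Otherwise $\textsf{F}$ contains exactly one, say $\varphi_x$. Here I would invoke Theorem~\ref{pileremovalth} applied to the pile element $y$: this produces a pointer $q$ such that applying \cds to $\pi$ with context $\{(y,y+1),q\}$ yields a permutation $\sigma$ with $\textsf{SP}(\sigma)\subseteq \textsf{SP}(\pi)\setminus\{y\}=\{x\}$, and the emptiness-invariance then upgrades this to $\textsf{SP}(\sigma)=\{x\}$. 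ONE plays this as the opening move, after which the analysis reduces to the previous singleton-pile case and the game is guaranteed to terminate at $\varphi_x\in\textsf{F}$.

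The only step that could reasonably be called an obstacle is ensuring the existence of this opening move, but that is precisely what the Strategic Pile Removal Theorem delivers; everything else is bookkeeping made possible by the hypothesis that the pile has at most two elements, which leaves at most one element whose removal ONE ever needs to engineer.
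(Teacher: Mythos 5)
Your proof is correct and follows essentially the same route as the paper: the paper's one-line proof appeals to the Strategic Pile Retention Theorem, whose two-element case is itself established by exactly the combination you spell out, namely the Strategic Pile Removal Theorem together with the invariance of (non)emptiness of the strategic pile under \cds. Your version is simply a more explicit unpacking, and correctly identifies that the decisive move is to \emph{remove} the unwanted pile element so that the favorable fixed point becomes inevitable thereafter.
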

\begin{proof}
When the strategic pile $\textsf{SP}(\pi)$ is nonempty and has at most two elements, ONE has a winning strategy if and only if $\textsf{F}$ is nonempty, by Theorem~\ref{pilesurvivalth}. 
\end{proof}
When the strategic pile $\textsf{SP}(\pi)$ has more than two elements, additional factors influence who has a winning strategy. We illustrate this feature in the following example.

\begin{repexample}{ex:reverseorder} 
Consider the permutation
$$\alpha_n = [2n \ (2n-1) \ \cdots \ 3 \ 2 \ 1].$$
Its strategic pile is $\{1,3,5,\ldots,2n-1\}$, and no pair of pointers from $\{ (1,2), (3,4), \ldots, (2n-1, 2n)\}$ provides context to a \cds-application. Thus a player can remove at most one element from the strategic pile on any given turn. 
{\flushleft{\bf Claim:}} ONE has a winning strategy in $\textsf{CDS}(\alpha_n,\textsf{F})$ if, and only if, $|\textsf{F}| \ge n/2$. \\
Indeed, ONE's strategy is always to remove, whenever one exists, a strategic pile element that is not also in $\textsf{F}$. That this can be done follows from the Strategic Pile Removal Theorem, Theorem~\ref{pileremovalth}. Also, player TWO can remove at most one element of $\textsf{F}$ on each turn. Since ONE takes the first turn, and the strategic pile contains $n$ elements, this is a winning strategy if $|\textsf{F}| \ge n/2$. For the converse, suppose that $\vert\textsf{F}\vert < n/2$. After ONE's first move (removing an element of the strategic pile not belonging to \textsf{F}), player TWO has the role of player ONE in the game based on the resulting permutation. At least half of the elements of the corresponding strategic pile belong to this new player ONE, and we can now apply the previous argument.

For the promised contrast to the case when the strategic pile has at most two elements, now consider the specific instance of this example when $n=3$. The permutation $\alpha_3 = \lbrack 6\; 5\; 4\; 3\; 2\; 1 \rbrack$ has strategic pile $\{1,\; 3,\; 5\}$, and if the subset $F$ of the strategic pile has only one element, then TWO has a winning strategy in the game $\textsf{CDS}(\alpha_3,\textsf{F})$.
\end{repexample}
In Example \ref{ex:reverseorder} we exploited the fact that at most one strategic pile element is removed in a \cds move. Not all strategic piles have this property, but by Corollary \ref{cdsbdedremoval} at most two elements of the strategic pile can be removed per application of a \cds sorting operation. This fact leads to the criterion in Theorem~\ref{cdsgreedy} below.

\begin{theorem}\label{cdsgreedy}
Let $\pi \in \s_n$ not be \cds-sortable, and consider some $\textsf{F} \subseteq \textsf{SP}(\pi)$.
\begin{enumerate}\renewcommand{\labelenumi}{(\alph{enumi})}
\item{ If $|\textsf{F}| \ge \frac{3}{4}|\textsf{SP}(\pi)|$ 
then ONE has a winning strategy in $\textsf{CDS}(\pi,\textsf{F})$.}
\item{ If $|\textsf{F}| < \frac{1}{4}|\textsf{SP}(\pi)| - 2$ 
then TWO has a winning strategy in $\textsf{CDS}(\pi,\textsf{F})$.}
\end{enumerate}
\end{theorem}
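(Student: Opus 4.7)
The plan is to recast the game as a combinatorial pile-shrinking game on $\textsf{SP}(\pi)$. By Theorem~\ref{strategicpilecorollary} and Corollary~\ref{strpilecdsfp}, since $\pi$ is not \cds-sortable, the final \cds fixed point of any play corresponds bijectively to the unique element of the strategic pile that survives the sequence of \cds operations. Thus the game reduces to: starting from the set $\textsf{SP}(\pi)$, players alternately apply moves that remove $0$, $1$, or $2$ elements from the current pile (Corollary~\ref{cdsbdedremoval}), and ONE wins iff the single surviving element lies in $\textsf{F}$. Call elements of $\textsf{F}$ \emph{good} and elements of $G := \textsf{SP}(\pi) \setminus \textsf{F}$ \emph{bad}, and set $s = |\textsf{SP}(\pi)|$, $f = |\textsf{F}|$, $g = |G|$. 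The crucial tool is Theorem~\ref{pileremovalth}: so long as $|\textsf{SP}(\pi_i)| > 1$, the player to move can target and remove any specific strategic pile element of their choice, possibly along with one other element whose identity is not fully within their control.

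For part (a), where $f \ge 3s/4$, I would hand ONE the following greedy strategy: on each of ONE's turns, use Theorem~\ref{pileremovalth} to remove a bad element from the current pile, repeating until $G$ is exhausted. During the first $g$ of ONE's turns, each ONE-move removes at least one element of $G$ and at most one element of $\textsf{F}$ incidentally, while the (at most $g$) intervening TWO-moves remove at most two elements of $\textsf{F}$ apiece. Hence at most $g + 2g = 3g$ good elements are removed before $G$ empties; from $f \ge 3s/4$ one deduces $f > 3g$, so at least one good element survives. Thereafter every remaining pile element lies in $\textsf{F}$, and the eventual lone survivor is therefore good, giving ONE the win. Part (b), with $f < s/4 - 2$, is symmetric: I would give TWO the greedy strategy of removing a good element on each turn. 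Because TWO plays second, ONE receives one extra turn during which up to two bad elements may be removed, and the ``$-2$'' in the bound absorbs this off-by-one offset.

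The main obstacle will be the fine accounting at the boundary. A naive implementation of the strategy above only establishes ONE's win under the slightly stronger inequality $4f \ge 3s + 1$, a hair sharper than $f \ge 3s/4$; matching the stated bound requires a careful end-of-game analysis, including the observations that the game terminates precisely when the pile has size $1$ (so the final removal is partially constrained), and that a \cds move sometimes removes strictly fewer than two pile elements (which can only help the player following the greedy strategy). A secondary subtlety is to verify that the hypothesis of Theorem~\ref{pileremovalth} remains valid throughout the play, namely that the current pile has more than one element whenever the strategy-follower is about to move. This is automatic while the game is still in progress and the target set (respectively $G$ for ONE or $\textsf{F}$ for TWO) is non-empty, because these conditions together force $|\textsf{SP}(\pi_i)| \ge 2$.
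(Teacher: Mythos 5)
Your proposal matches the paper's proof in all essentials: ONE (resp.\ TWO) greedily uses the Strategic Pile Removal Theorem~\ref{pileremovalth} to delete opponent-favoring elements, the bound of Corollary~\ref{cdsbdedremoval} gives the $3{:}1$ bookkeeping for part~(a), and part~(b) follows by handing TWO the same strategy after ONE's first move, with the ``$-2$'' absorbing that extra move. The boundary accounting you flag is handled in the paper by maintaining the invariant $m_1(k)\ge 3m_2(k)$ turn by turn (together with the fact that the strategic pile of a non-sortable permutation never empties), rather than by a one-shot count, but the argument is the same.
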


\begin{proof}
By Theorem~\ref{pileremovalth}, it is always possible to remove an element favoring the opponent from $\textsf{SP}(\pi)$, as long as $|\textsf{SP}(\pi)| > 1$ and elements favoring the opponent remain in play. The number of disjoint cycles in the permutation $C_*$ increases by at most two in a given turn, meaning that at most two elements at a time are removed from the strategic pile.

Let $m$ be the number of elements on the strategic pile. Let $m_1(k)$ denote the number of elements on the strategic pile that favor ONE before ONE's $k$th turn, and let $m_2(k)$ denote the number that favor TWO before ONE's $k$th turn. Thus $m = m_1(1) + m_2(1)$.

Suppose that $m_1(1) \ge 3m_2(1)$.

Consider the following strategy for player ONE: on each turn, remove as many elements favoring TWO as possible from the strategic pile. There are now three possibilities when ONE is about to play his $k$th turn:
\begin{enumerate}
\item $m_2(k) = 0$,
\item only one element that favors TWO is removable, or
\item two elements favoring TWO are removable.
\end{enumerate}

In the first scenario, ONE wins the resulting play.

Consider the second scenario. ONE's play is to remove that lone element favorable to TWO, even if it requires also removing an element favorable to ONE. In TWO's subsequent move, it may be possible to remove two elements favoring ONE, but as long as $m_1(k) \ge 3m_2(k)$, then $m_1(k+1) \ge 3m_2(k+1)$.

In the third scenario, ONE's play is to remove two elements that are favorable to TWO. Then, by the Strategic Pile Removal Theorem, Theorem~\ref{pileremovalth}, TWO can remove at least one element favorable to ONE on the next turn. Thus
\begin{eqnarray*}
m_1(k+1) &\ge& m_1(k) - 2\\
&\ge& 3m_2(k) - 2\\
&>& 3m_2(k+1).
\end{eqnarray*}

Thus we have $m_1(k) \ge 3m_2(k)$ for all $k$. Since $m_i(k) < m_i(k-1)$ for all $k$ during which the game is in play, the game will eventually terminate, when a $k$ is reached for which $m_2(k) = 0$.

Suppose, instead, that $m_1(1) < (1/3)m_1(2)- 2$. On the first turn, player ONE can remove at most two elements favoring player TWO. Thus $m_2(2) \ge 3m_1(2)$, and so the above argument shows that player TWO has a winning strategy in the game.
\end{proof}

At first glance it may seem that the criterion of Theorem \ref{cdsgreedy} can be vastly improved. However, it has since been proved in \cite{JSST} that this result is optimal: For each $n>4$ that is a multiple of 4, there are examples of permutations $\pi_n$ such that, in the notation of of Theorem \ref{cdsgreedy}, if $\vert\textsf{F}\vert<\frac{3}{4}\vert\textsf{SP}(\pi_n)\vert$, then TWO has a winning strategy in the game $\textsf{CDS}(\pi_n,\textsf{F})$. No other general criteria towards answering the Fixed Point Sorting Game decision problem, {\bf D.2}, are currently known.

Also the popular \emph{normal play} and \emph{misere} versions of combinatorial games can be considered for the game $\textsf{CDS}(\pi,\textsf{F})$:

\begin{definition}\
\begin{enumerate}\renewcommand{\labelenumi}{(\alph{enumi})}
\item The game $\textsf{CDSN}(\pi)$ proceeds as $\textsf{CDS}(\pi,\textsf{F})$, but the player that makes the last legal move wins. This is the \emph{normal play} version.
\item The game $\textsf{CDSM}(\pi)$ proceeds as $\textsf{CDS}(\pi,\textsf{F})$, but the player that makes the last legal move loses. This is the \emph{misere} version.
\end{enumerate}
\end{definition}

As noted before, these games are of finite length, and do not end in draws between the two players. Thus Zermelo's Theorem, Theorem~\ref{fundthfingames}, applies. The accompanying decision problems are:

\begin{quote}
\framebox{
\parbox{.8\textwidth}{
{\bf D.3 \hspace{0.1in} NORMAL SORTING GAME:}

INSTANCE: A positive integer $n$ and a permutation $\pi \in \s_n$.

QUESTION: Does ONE have a winning strategy in the game $\textsf{CDSN}(\pi)$?}}
\end{quote}

\begin{quote}
\framebox{
\parbox{.8\textwidth}{
{\bf D.4 \hspace{0.1in} MISERE SORTING GAME:}

INSTANCE: A positive integer $n$ and a permutation $\pi \in \s_n$.

QUESTION: Does ONE have a winning strategy in the game $\textsf{CDSM}(\pi)$?}}
\end{quote}

These two decision problems are solved by an application of earlier duration results related to \cds: Recall that the \cds Duration Theorem, Theorem~\ref{cdssteps}, gave the number of steps required to obtain a \cds fixed point of $\pi$. The parity of this number of steps determines which player has a winning strategy in the normal or misere version of the game. Since Christie's method for computing $c(\pi)$ is a linear time algorithm (see \cite{DC}, Figure 7) the complexity of the decision problem of which player has a winning strategy in ${\sf CDSN}(\pi)$ or in ${\sf CDSM}(\pi)$ for $\pi \in \s_n$ is linear in $n$.

\section{Sorting by context directed reversals}\label{section:cdr} 

In earlier literature the sorting operation \cdr appeared under different names: In studies of the reversal distance between signed permutations the name \emph{oriented reversals} was used, for example in \cite{AB, HP}. Independently, in literature on models of ciliate micronuclear decryption, such as for example \cite{EHPPR, EHPR}, the name \textsf{hi} was used. The \emph{context directed reversal} sorting operation, \cdr, is defined as follows in terms of the pointers described in Section~\ref{section:definitions}.

\begin{definition}
Consider a signed permutation $\pi$ in which the pointers $p$ and $-p$ both appear. The \cdr operation on $\pi$ with context $p$ reverses and negates the block of letters that are flanked by the pointers $\{\pm p\}$.
\end{definition}

\begin{example}
In $\pi = [3 \ -\!1 \ -\!2 \ 5 \ 4] \in \s_5^{\pm}$ the left pointer of $3$ is $(2,3)$, and the left pointer of $-2$ is $-(3,2)$. Applying \cdr with this context results in the signed permutation is $[1 \ -\!3 \ -\!2 \ 5 \ 4] \in \s_5^{\pm}$.
\end{example}

\begin{definition}
An element of $\s_n^{\pm}$ is \emph{\cdr-sortable} if application of some sequence of \cdr operations terminates in the identity element $e \in \s_n^{\pm}$. An element of $\s_n^{\pm}$ is \emph{reverse \cdr-sortable} if application of some sequence of \cdr operations terminates in $[-\!n \ -\!(n-1) \ \cdots \ -\!1]$.
\end{definition}

\begin{definition}
Let $\fixcdr$ be the elements of $\s_n^{\pm}$ for which \cdr cannot be applied. 
\end{definition}
Thus, $\fixcdr \subseteq \s_n^{\pm}$ are \cdr fixed points. We now identify the structure of the set \fixcdr.

\begin{theorem}\label{cdrfixedinsigned}
The set $\fixcdr \subseteq \s_n^{\pm}$ consists of elements whose inverse image notation is either entirely positive or entirely negative. Moreover, $\fixcdr$ is a subgroup of $\s_n^{\pm}$.
\end{theorem}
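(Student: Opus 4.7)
The plan is to prove the two claims in order: first the characterization of $\fixcdr$ as the set of monochromatic signed permutations, and then the subgroup assertion, which will follow by a short algebraic argument.

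For the characterization, the key observation is that the pointer formulas in Section~\ref{section:definitions} stratify pointers by sign: a positive entry $a_i$ only contributes pointers of the form $(j,j{+}1)$, while a negative entry only contributes pointers of the form $-(j{+}1,j)$. Hence if every entry of $\pi$ carries the same sign, every pointer appearing in $\pi$ has the same polarity, so no pair $\{p,-p\}$ can both occur and $\pi\in\fixcdr$. For the converse I would prove the contrapositive: suppose $\pi$ has entries of both signs, and record, for each $k\in\{1,\dots,n\}$, the sign $\epsilon_k\in\{+,-\}$ with which the value $k$ appears in $\pi$. Since the string $\epsilon_1\cdots\epsilon_n$ is not constant, some $k$ satisfies $\epsilon_k\neq\epsilon_{k+1}$, and a short case analysis (positive-then-negative, or negative-then-positive) shows that the pointer $(k,k{+}1)$ is contributed by the positive member of $\{k,k{+}1\}$ while $-(k{+}1,k)$ is contributed by the negative member; hence a pair $\{p,-p\}$ appears, and \cdr is applicable, so $\pi\notin\fixcdr$.

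For the subgroup structure, let $w_0\in\s_n^{\pm}$ denote the involution $w_0(i)=-i$. The defining relation $\sigma(-i)=-\sigma(i)$ of a signed permutation immediately yields $w_0\sigma=\sigma w_0$ for every $\sigma\in\s_n^{\pm}$, so $w_0$ is central. The all-positive elements of $\s_n^{\pm}$ form the natural copy of the unsigned symmetric group $\s_n\hookrightarrow\s_n^{\pm}$, while the all-negative elements form the coset $\s_n w_0=w_0\s_n$. By the characterization just established,
\[
  \fixcdr \;=\; \s_n \,\cup\, \s_n w_0 \;=\; \s_n \cdot \langle w_0\rangle,
\]
and this is a subgroup: closure under products follows from $(\sigma w_0)(\tau w_0)=\sigma\tau w_0^{2}=\sigma\tau$ (using centrality and $w_0^2=e$), and closure under inverses from $(\sigma w_0)^{-1}=w_0^{-1}\sigma^{-1}=\sigma^{-1}w_0$.

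The one delicate point, and the main obstacle, is the forward direction of the characterization at the boundary, where $k\in\{1,n-1\}$: then one of the relevant entries lies in $\{\pm 1,\pm n\}$ and so carries only one pointer by the convention in Section~\ref{section:definitions}. One must verify that the specific pointer $(k,k{+}1)$ (respectively $-(k{+}1,k)$) is the surviving one on each side of the sign change. This is a short finite check rather than a genuine difficulty, but skipping it would leave a gap in the argument.
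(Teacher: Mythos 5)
Your proof is correct and follows essentially the same route as the paper's: monochromatic entries yield pointers of a single polarity, and in the mixed case two consecutive values carried with opposite signs supply a $\{p,-p\}$ context, while your subgroup argument via the central involution $w_0$ is a structural restatement of the paper's ``product of signs'' observation (exhibiting $\fixcdr$ as $\s_n\times\langle w_0\rangle$). The boundary check you flag for $k\in\{1,n-1\}$ is indeed the only delicate point, and it is one the paper's own proof silently elides.
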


\begin{proof}
First note that $\fixcdr$ consists of those permutations $[a_1 \ \cdots \ a_n]$ in which $\pm p$ do not both appear for any pointer $p$. Certainly if all $\{a_i\}$ have the same sign then this will be the case. We must now consider whether any other permutation might also be an element of $\fixcdr$. Suppose that at least one of the $\{a_i\}$ is positive and at least one is negative. In fact, we can find an $x$ such that some $a_i = x$ and some $a_j = -(x+1)$. Without loss of generality, suppose that $x > 0$. Then the right pointer of $a_i$ is $(x,x+1)$, and the right pointer of $a_j$ is $-(x+1,x)$. Thus we would be able to apply \cdr with context $\pm(x,x+1)$, and such a permutation would not be a \cdr fixed point.

That $\fixcdr$ is a subgroup of $\s_n^{\pm}$ follows from the fact that the product of two same-signed integers is always positive, and the product of two opposite-signed integers is always negative.
\end{proof}

\begin{corollary}
$|\fixcdr| = 2n!$.
\end{corollary}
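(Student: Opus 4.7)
The plan is to read off the cardinality directly from the structural description given in Theorem~\ref{cdrfixedinsigned}. That theorem tells us that $\fixcdr$ partitions naturally into two subsets: the set $P$ of signed permutations $[a_1\ \cdots\ a_n]$ with every $a_i > 0$, and the set $N$ of signed permutations with every $a_i < 0$. These two sets are disjoint for $n \ge 1$, since no element of either set can have both a positive and a negative entry.

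Next I would exhibit bijections from each of $P$ and $N$ onto $\s_n$. For $P$, the map sending $[a_1\ \cdots\ a_n] \in P$ to the underlying unsigned permutation $[a_1\ \cdots\ a_n] \in \s_n$ is clearly a bijection, so $|P| = n!$. For $N$, the map $[a_1\ \cdots\ a_n] \mapsto [|a_1|\ \cdots\ |a_n|]$ is likewise a bijection onto $\s_n$, so $|N| = n!$.

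Putting these together gives $|\fixcdr| = |P| + |N| = n! + n! = 2n!$, as claimed. There is no real obstacle here: all the work has already been done in Theorem~\ref{cdrfixedinsigned}, and the corollary is just a counting consequence. The only thing worth being careful about is the disjointness of $P$ and $N$, which follows immediately once $n \ge 1$ from the observation that an all-positive sequence cannot simultaneously be all-negative.
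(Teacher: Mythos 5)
Your argument is correct and is exactly the counting consequence the paper intends: Theorem~\ref{cdrfixedinsigned} identifies $\fixcdr$ as precisely the all-positive and all-negative signed permutations, and these two disjoint classes each biject with $\s_n$, giving $n!+n!=2n!$. The paper omits the proof entirely, so your write-up simply makes the implicit argument explicit; nothing is missing.
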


The set $\fixcds \subseteq \s_n^{\pm}$ consisting of \cds fixed points also has an interesting algebraic structure:
\begin{theorem}\label{cdsfixedinsignedperm}
The set $\fixcds \subseteq \s_n^{\pm}$ of \cds fixed points is isomorphic to the dihedral group $D_n$.
\end{theorem}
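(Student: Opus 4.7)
My plan is to exhibit $2n$ explicit elements of $\s_n^{\pm}$, verify that they form a subgroup isomorphic to $D_n$, and argue that this subgroup equals $\fixcds$. Set $r = [2 \ 3 \ \cdots \ n \ 1]$, the cyclic shift from Lemma~\ref{lem:cdsfixedpoints}, and $s = [-n \ -(n-1) \ \cdots \ -1]$, the reverse-and-negate of the identity. A direct computation on $\{1,\ldots,n\}$ shows that $r$ has order $n$, that $s(i) = -(n+1-i)$ has order $2$, and that $srs = r^{-1}$; hence $\langle r, s\rangle$ is a dihedral group of order $2n$ sitting inside $\s_n^{\pm}$, with elements $\{r^k : 0 \le k < n\} \cup \{sr^k : 0 \le k < n\}$.

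Next I would verify $\langle r, s\rangle \subseteq \fixcds$. Each $r^k = [k{+}1 \ k{+}2 \ \cdots \ n \ 1 \ \cdots \ k]$ is all-positive, and its signed pointer sequence coincides with its unsigned one, so Lemma~\ref{lem:cdsfixedpoints} gives $r^k \in \fixcds$. For the $sr^k$ I would introduce the reverse-and-negate involution $\iota \colon [a_1 \ \cdots \ a_n] \mapsto [-a_n \ \cdots \ -a_1]$ on $\s_n^{\pm}$, check directly that $\iota(r^k) = sr^k$, and observe that $\iota$ reverses the sequence of pointer magnitudes of any permutation. The key point here is that negating a single entry swaps the magnitudes of its left and right pointers, and reversing the positional order then restores these pointers to their original reading order but with the full sequence flipped. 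Since alternation of two pointer magnitudes is preserved under reversal, $\iota$ sends \cds fixed points to \cds fixed points, and hence $sr^k \in \fixcds$.

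The main obstacle is the reverse inclusion $\fixcds \subseteq \langle r, s\rangle$. I would split on the sign pattern of $\pi \in \fixcds$. If $\pi$ is all-positive, then $\pi \in \s_n$ and Lemma~\ref{lem:cdsfixedpoints} identifies it as some $r^k$. If $\pi$ is all-negative, then $\iota(\pi)$ is all-positive and still a \cds fixed point, so $\iota(\pi) = r^k$ for some $k$, and therefore $\pi = \iota(r^k) = sr^k$. The hardest case is mixed signs. Here the key combinatorial point is that whenever consecutive integers $k$ and $k+1$ appear in $\pi$ with opposite signs, the two copies of the magnitude-$\{k,k+1\}$ pointer carry opposite signs $(k, k+1)$ and $-(k+1, k)$; by analyzing how these ``split'' pointers interleave with the neighboring pointer magnitudes, one produces an alternating pattern and hence a legal \cds context, contradicting the fixed-point assumption. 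Combining the three cases yields $\fixcds = \langle r, s\rangle \cong D_n$.
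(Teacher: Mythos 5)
Your overall architecture is the same as the paper's: identify the \cds fixed points of $\s_n^{\pm}$ as the $n$ all-positive rotations together with their $n$ reverse-and-negate images, then check that these $2n$ elements form a dihedral group. Your group-theoretic half is actually more careful than the paper's: the paper names $\mu_n = [n \ (n-1) \ \cdots \ 2 \ 1]$ as the order-$n$ generator, but that element is an involution (and is not even a \cds fixed point for $n \ge 3$); your $r = [2 \ 3 \ \cdots \ n \ 1]$ together with $s$ and the relation $srs = r^{-1}$ is the correct choice, and the identification $sr^k = \iota(r^k)$ cleanly accounts for the all-negative fixed points via the reverse-and-negate involution.

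The gap is in the mixed-sign case, which is the only genuinely hard step (the paper itself dismisses it with ``by analyzing possible pointer contexts''). You claim that when $k$ and $k+1$ carry opposite signs, the resulting ``split'' occurrences $(k,k+1)$ and $-(k+1,k)$ can be parlayed into a legal \cds context. But a \cds context $\{p,q\}$ requires each of $p$ and $q$ to occur twice \emph{with the same orientation} (the ``direct repeat'' in \textsf{dlad}); a pointer occurring once as $p$ and once as $-p$ is exactly the configuration that licenses \cdr, not \cds, so the split pointers are unusable for your purpose. What you must actually exhibit are two pointer magnitudes, each of whose two occurrences agree in sign, interleaved as $p \cdots q \cdots p \cdots q$, and you give no argument that a mixed-sign permutation must contain such a pair. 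In fact, for permutations such as $[1 \ -\!2 \ 3 \ 4 \ \cdots \ n]$ every pointer either is split or has its two occurrences adjacent, so under the orientation-respecting reading of the \cds context no such pair exists; whether these permutations count as \cds fixed points hinges on exactly how the context is defined for signed permutations, a convention the paper never states and your sketch does not engage with. You need either to fix that convention explicitly and then carry out the interleaving argument in full, or to show directly that a signed permutation admitting no \cds context and containing entries of both signs already collapses to one of the two listed forms.
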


\begin{proof}
By analyzing possible pointer contexts, we see that the only \cds fixed points of $\s_n^{\pm}$ are of either the form
$$[k \ (k+1) \ \cdots \ n \ 1 \ 2 \ \cdots \ (k-1)],$$
as described in Lemma~\ref{lem:cdsfixedpoints}, or
$$[-\!(k-1) \ \cdots \ -\!2 \ -\!1 \ -\!n \ -\!(n-1) \ \cdots \ -\!k].$$
The collection of these objects is generated by the signed permutations
$$\mu_n = [n \ (n-1) \ \cdots \ 2 \ 1]$$
and
$$\nu_n = [-\!n \ -\!(n-1) \ \cdots \ -\!2 \ -\!1].$$
These signed permutations have order $n$ and $2$, respectively, in $\s_n^{\pm}$, and $\mu_n^i\nu_n = \nu_n\mu_n^{n-i}$. Thus $\langle \mu_n,\nu_n \rangle$ is isomorphic to the dihedral group $D_n$.
\end{proof}

As with \cds, the \cdr-sortability, or reverse \cdr-sortability, of a signed permutation $\pi$ has been treated in several prior works including \cite{HP} and independently \cite{BHR, EHPR}. But in stark contrast with  Theorem \ref{cdsinevitable} for \cds, there is no corresponding inevitability theorem for the \cdr sorting operation, as illustrated in Example \ref{ex:cdrlengths}:
\begin{example}\label{ex:cdrlengths}
Consider $\pi = [3 \ -\!1 \ -\!2 \ 5 \ 4] \in \s_5^{\pm}$. This is a \cdr-sortable element, as can be seen by applying \cdr first to the pointer $(2,3)$, then to $(3,4)$, then to $(4,5)$, and finally to $(1,2)$. However, first applying \cdr for the pointer $(2,3)$ and then for the pointer $(1,2)$ produces $[1 \ 2 \ 3 \ 5 \ 4]$, which is a \cdr fixed point and not \cdr-sortable.
\end{example}

This phenomenon makes the \cdr sorting operation more complex to analyze than the \cds sorting operation. Absence of an inevitability theorem for \cdr-sortable signed permutations necessitates a strategic approach to sorting signed permutations by \cdr. An efficient strategy for determining a sequence of applications of \cdr that successfully sort a \cdr-sortable, or a reverse \cdr-sortable, signed permutation has been identified in \cite{HP}. For an interesting exposition of, as well as further advances in the search for optimally efficient strategies towards sorting a \cdr-sortable signed permutation we recommend \cite{AB}.  

A second complication in the analysis of the \cdr sorting operation is that at this time there is no known efficient algorithm for the \cdr Fixed Point Decision Problem:
 \begin{quote}
\framebox{
\parbox{.8\textwidth}{
{\bf D.5 \hspace{0.1in} \cdr FIXED POINT:}

INSTANCE: Integer $n > 1$, a signed permutation $\pi \in \s_n^{\pm}$, and a \cdr fixed point $\varphi \in \s_n^{\pm}$.

QUESTION: Is $\varphi$ a \cdr fixed point of $\pi$?}}
\end{quote}

Though \cdr-sortability, or reverse sortability, of a signed permutation $\pi$ has been treated in \cite{HP} and independently in \cite{BHR, EHPR}, we derive here, in the interest of being self-contained, only a necessary condition for \cdr-sortability (and reverse sortability). This condition will be a tool in identifying some of the properties of \cdr fixed points of certain signed permutations. 

Towards this objective we construct an object suggested by a hybrid of the cycle graph of a permutation as introduced by Bafna and Pevzner \cite{BP}, and the breakpoint graph of a signed permutation, introduced by Hannenhalli and Pevzner \cite{HP}. Though not exactly the object constructed by \cite{DL}, our construction is in the same spirit:

For an integer $m$, define $f(m)$ by:
$$f(m) = \begin{cases}
[2m\!-\!1 \ 2m] & \text{if } m \ge 0 \text{ and}\\
[-\!2m \ -\!(2m+1)] & \text{if } m < 0\\
\end{cases}$$
For $\pi = [a_1 \ \cdots \ a_n] \in \s_n^{\pm}$, set 
$$\pi^*= [f(a_1)\| f(a_2) \| \cdots \| f(a_n)],$$
the concatenation of the ordered lists $f(a_i)$.

In analogy to Definition~\ref{defn:permutation product}, we now make the following definition.

\begin{definition}\label{defn:cdr permutation product}
Given $\pi \in \s_n^{\pm}$ with $\pi^* = [b_1 \ \cdots \ b_{2n}]$, define two permutations on the set $\{0, 1, \ldots, 2n+1\}$, written in cycle notation, as follows:
$$U = (0 \ 1)(2 \ 3)(4 \ 5) \cdots (2n \ 2n+1)$$
and
$$V_{\pi} = (0 \ b_1) (b_2 \ b_3)(b_4 \ b_5) \cdots (b_{2n} \ 2n+1).$$
Now set
$$D_{\pi} = V_{\pi}U.$$
\end{definition}

\begin{example}\label{ex:breakpoint}
If $\pi = [3 \ -\!1 \ -\!2 \ 5 \ 4]$, then
$$\pi^* = [ 5 \ 6 \ 2 \ 1 \ 4 \ 3 \ 9 \ 10 \ 7 \ 8]$$
and
$$V_{\pi} = (0 \ 5) (6 \ 2)(1 \ 4)(3 \ 9)(10 \ 7)(8 \ 11),$$
which produces
\begin{eqnarray*}
D_{\pi} &=& V_{\pi}U\\
&=& (0 \ 4)(1 \ 5)(2 \ 9 \ 11 \ 7)(3 \ 6 \ 10 \ 8).
\end{eqnarray*}

Applying \cdr with pointer $\pm(2,3)$ to $\pi$ produces $\sigma$, where
$$\sigma^* = [1 \ 2 \ 6 \ 5 \ 4 \ 3 \ 9 \ 10 \ 7 \ 8],$$
and we now have $D_{\sigma} = (0)(1)(2 \ 9 \ 11 \ 7)(3 \ 6 \ 10 \ 8)(4)(5)$.
\end{example}

Consider the permutation $D_{\pi}$ where $\pi^*$ is $\lbrack b_1\ \cdots \ b_{2n}\rbrack\in\s_{2n}$. As $D_{\pi}(2n) = b_{2n} \neq 0$, the two-cycle $(0\ 2n)$ never occurs in the disjoint cycle decomposition of $D_{\pi}$. Thus, when $0$ and $2n$ occur in the same cycle of $D_{\pi}$, then that cycle is of length at least three.  Also observe that if the entries $a$ and $a+1$ of $\pi$ appear in $\pi$ as $\lbrack \cdots a \ a+1 \ \cdots\rbrack$ then the two-cycle $(2a\ 2a+1)$ appears in (the disjoint cycle decomposition of) $V_{\pi}$, and as this two-cycle also appears in $U$, the disjoint cycle decomposition of $D_{\pi}$ contains the two one-cycles $(2a)$ and $(2a+1)$. The objective of sorting by \cdr is to convert $D_{\pi}$ to a permutation which in disjoint cycle decomposition form is a composition of one-cycles only. Since an application of \cdr creates an adjacency, the effect on the cycles of the permutation is to extract singleton cycles from existing longer cycles.

\begin{theorem}[\cdr-Sortability]\label{cdrsortability}
If the signed permutation $\pi \in \s_n^{\pm}$ is \cdr-sortable, then $0$ and $2n$ are in disjoint cycles of $D_{\pi}$.
\end{theorem}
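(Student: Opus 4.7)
The plan is to prove the contrapositive: if $0$ and $2n$ lie in a common cycle of $D_\pi$, then $\pi$ is not \cdr-sortable. For the identity $e$, a direct computation gives $V_e = U$, so $D_e = U^2$ is the identity permutation of $\{0,1,\ldots,2n+1\}$, and in particular $0$ and $2n$ are distinct singleton cycles of $D_e$. So it suffices to show that the property ``$0$ and $2n$ lie in a common cycle of $D_*$'' is invariant under applications of \cdr. The strategy exactly parallels the approach used in the proof of Theorem~\ref{swapsortable}.

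To analyze the invariant, fix a context $\pm p$ with $p=(x,x+1)$. Because $\pi$ is a signed permutation, exactly one of $\{x,-x\}$ and one of $\{x+1,-(x+1)\}$ appears in $\pi$, so the only two admissible configurations are (A) $p$ is the right pointer of entry $x$ and $-p$ is the right pointer of entry $-(x+1)$, or (D) $p$ is the left pointer of $x+1$ and $-p$ is the left pointer of $-x$. I would verify in each case that the \cdr operation corresponds at the level of $\pi^*$ to reversing a contiguous even-length block with appropriate endpoints. This works because $f(-m)$ is precisely the reverse of $f(m)$, so the negation of signed entries is absorbed into the block reversal.

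Next I would show that the effect of this block reversal on $V_\pi$ is a ``two-exchange'' of transpositions: the internal transpositions lying strictly inside the reversed block are preserved as unordered pairs (pairing rearranges but yields the same 2-cycles), while two boundary transpositions of the form $(A,2x+1)$ and $(B,2x)$ in $V_\pi$ are replaced by $(2x,2x+1)$ and $(A,B)$ in $V_\sigma$. Since the new transposition $(2x,2x+1)$ also appears in $U$, one computes $D_\sigma(2x)=2x$ and $D_\sigma(2x+1)=2x+1$; moreover $D_\sigma(v)=D_\pi(v)$ for every $v$ outside the four vertices $U(A),U(B),2x,2x+1$. Direct case analysis (whether $2x$ and $2x+1$ lie in the same cycle or different cycles of $D_\pi$) then shows that the change from $D_\pi$ to $D_\sigma$ consists precisely of \emph{removing} $2x$ and $2x+1$ from their cycle(s) and placing them as singletons, with no merging of cycles and no effect on cycles not containing $2x$ or $2x+1$.

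Because a valid pointer $p=(x,x+1)$ forces $1\le x\le n-1$, we have $\{2x,2x+1\}\cap\{0,2n\}=\emptyset$. Therefore if $0$ and $2n$ lie in a common cycle of $D_\pi$, that cycle is either unaffected by the \cdr move or is merely shortened by deleting $\{2x,2x+1\}$; in both situations $0$ and $2n$ remain in a common cycle of $D_\sigma$. This establishes the invariant and completes the proof by contrapositive. The main obstacle is the bookkeeping needed to verify the two-exchange claim on $V_\pi$ in both Case A and Case D, and in particular to show that no cycle of $D_*$ is ever merged with another under \cdr; this requires tracking the four affected vertices $U(A),U(B),2x,2x+1$ carefully in both sub-cases.
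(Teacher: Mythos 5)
Your proposal is correct and follows essentially the same route as the paper: argue by contrapositive, observe that $D_{e}$ is the identity so $0$ and $2n$ are disjoint singletons there, and show that a \cdr move with context $\pm(x,x+1)$ only extracts $2x$ and $2x+1$ (neither of which is $0$ or $2n$, since no $(0,1)$ or $(n,n+1)$ pointer exists) as fixed points of $D_{*}$ without merging cycles, so the property ``$0$ and $2n$ share a cycle'' is invariant. Your write-up is in fact more explicit than the paper's about why the update of $V_{\pi}$ is a two-exchange of boundary transpositions and why cycles are only shortened, never merged; the paper simply asserts this.
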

 
\begin{proof}
Suppose that $0$ and $2n$ are in the same cycle of $D_{\pi}$. Applying \cdr to $\pi$ for any pointers $\pm(x,x+1)$ to obtain some $\sigma$ will create two $1$-cycles $(2x)(2x+1)$ in $D_{\sigma}$. The cycle $C$ of $D_{\pi}$ that contained $x$ appears in $D_{\sigma}$ as $C \setminus \{x\}$, and similarly for the cycle that had contained $x+1$, and all other cycles of $D_{\pi}$ are unchanged in $D_{\sigma}$. There is no $(0,1)$ or $(n,n+1)$ pointer in $\pi \in \s_n^{\pm}$, so if $0$ and $2n$ are in the same cycle of $D_{\pi}$ then they must be for $D_{\sigma}$ as well. Moreover, since $(2n \ 0)$ is not a cycle in either $U$ or $V_{\pi}$, there must be some third value in this cycle as well. It will never be possible to reduce the length of the cycle containing $0$ and $2n$ to less than three, and hence $\pi$ is not \cdr-sortable. \end{proof}

The converse to Theorem~\ref{cdrsortability} is not true, as illustrated in Example \ref{ex:cdrsortcharfails}:

\begin{example}\label{ex:cdrsortcharfails}
Let $\pi = [2 \ 4 \ 3 \ 5 \ -\!1 \ 6] \in \s_6^{\pm}$. We have $\pi^* = [3 \ 4 \ 7 \ 8 \ 5 \ 6 \ 9 \ 10 \ 2 \ 1 \ 11 \ 12]$ and $D_{\pi} = (0 \ 11 \  2)(1 \ 3 \ 10)(4 \ 8 \ 6)(5 \ 7 \ 9)(12)$, in which $0$ and $12$ appear in disjoint cycles. This $\pi$ is not \cdr-sortable because after the (only possible) first application of \cdr with context $\pm (1,2)$, the resulting signed permutation is $[ -\!5 \ -\!3 \ -\!4 \ -\!2 \ -\!1 \ 6]$, and after the (only possible) next application of \cdr with context $\pm (5,6)$, the resulting permutation is the unsorted $[1 \ 2 \ 4 \ 3 \ 5 \ 6]$.
\end{example}

The following results provide some constraints on the possibilities for \cdr fixed points of \cdr-sortable, or reverse sortable, signed permutations.

\begin{theorem}\label{cdrsortable}
If $\pi \in \s_n^{\pm}$ is \cdr-sortable, then every \cdr fixed point of $\pi$ is an element of $\s_n$.
\end{theorem}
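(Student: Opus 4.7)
My plan is to construct a \cdr-invariant that distinguishes the identity $e$ from every all-negative \cdr fixed point; by Theorem~\ref{cdrfixedinsigned} the fixed points split into an all-positive class (lying in $\s_n$) and an all-negative class, so ruling out reaching an all-negative fixed point from a \cdr-sortable $\pi$ is all I need. Define
\[
S(\pi) \;=\; \{0,\,1,\,2n,\,2n{+}1\} \cap \mathcal{C}_\pi,
\]
where $\mathcal{C}_\pi$ is the cycle of $D_\pi$ containing $0$. The cycle-analysis in the proof of Theorem~\ref{cdrsortability} shows that a \cdr step with context $\pm(x,x+1)$ only removes $2x$ and $2x+1$ from their cycles in $D_\pi$ (detaching them as new $1$-cycles) while leaving every other cycle-membership intact. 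Because $1\le x\le n-1$ forces $\{2x,2x+1\}\subseteq\{2,\ldots,2n-1\}$, none of $0,1,2n,2n{+}1$ can ever leave $\mathcal{C}$, so $S$ is \cdr-invariant. Moreover $D_e$ is the identity permutation, giving $S(e)=\{0\}$.

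The crux is to show $S(\varphi)\ne\{0\}$ for every all-negative $\varphi=[-c_1,\ldots,-c_n]$. I would pass to the $2$-regular multigraph $G_\varphi=V_\varphi\cup U$ on $\{0,1,\ldots,2n{+}1\}$; each undirected cycle of $G_\varphi$ decomposes into a pair of $D_\varphi$-cycles whose memberships alternate along the edges. Call a $V_\varphi$-edge \emph{special} if its two endpoints have the same parity and \emph{normal} otherwise; every $U$-edge is normal. For all-negative $\varphi$ only the boundary pairs $(0,2c_1)$ (even-even) and $(2c_n-1,2n{+}1)$ (odd-odd) are special, while each middle pair $(2c_i-1,2c_{i+1})$ is odd-even. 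A closed alternating walk in $G_\varphi$ returns to its start with net parity change $0$, and each normal edge flips parity while each special edge preserves it; hence every graph cycle uses an even number of special edges. Since $(0,2c_1)$ lies in the graph cycle through $0$, that cycle must also contain $(2c_n-1,2n{+}1)$ and therefore $2n{+}1$; through the $U$-edges $(0,1)$ and $(2n,2n{+}1)$ it likewise contains $1$ and $2n$. Inside this graph cycle the two $D_\varphi$-cycles alternate across the $U$-edges, so from each $U$-pair $\{0,1\}$ and $\{2n,2n{+}1\}$ exactly one element belongs to $\mathcal{C}_\varphi$. Thus $S(\varphi)\in\bigl\{\{0,2n\},\{0,2n{+}1\}\bigr\}$, which is never $\{0\}$.

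Putting it together: a \cdr-sortable $\pi$ satisfies $S(\pi)=S(e)=\{0\}$, and invariance propagates this to every reachable fixed point $\varphi$, forcing $S(\varphi)=\{0\}$; so $\varphi$ cannot be all-negative and must lie in $\s_n$. The main technical hurdle is the parity argument on $G_\varphi$, which requires carefully distinguishing the undirected cycle through $0$ in $G_\varphi$ from the $D_\varphi$-cycle of $0$ it carries, and verifying that the two $D_\varphi$-cycles inside a single graph cycle genuinely alternate across the edges as claimed.
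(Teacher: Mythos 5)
Your proof is correct, and it takes a genuinely different route from the paper's, although both arguments share the same skeleton: reduce via Theorem~\ref{cdrfixedinsigned} to excluding an all-negative fixed point, and then run a parity argument on the cycles of $D_\varphi$. The paper works directly inside the $D_\varphi$-cycle containing $0$, tracking runs of odd and even values to force $2n$ into that cycle and thereby contradict Theorem~\ref{cdrsortability}; you instead pass to the undirected graph formed by the two matchings $U$ and $V_\varphi$, observe that every alternating cycle must carry an even number of equal-parity (``special'') edges, and propagate the finer invariant $S(\pi)=\{0,1,2n,2n+1\}\cap\mathcal{C}_\pi$ rather than merely the predicate ``$0$ and $2n$ share a cycle.'' The finer invariant buys real robustness: when the all-negative fixed point ends in $-\!1$ (for instance $\varphi=[-\!2 \ \ -\!1]$, where $D_\varphi=(0\ 5)(1\ 4)(2)(3)$), it is $2n+1$ rather than $2n$ that lands in the cycle of $0$, so the paper's step ``the only possibility for this $2x$ is $2n$'' does not apply there, whereas your conclusion $S(\varphi)\in\{\{0,2n\},\{0,2n+1\}\}\neq\{0\}=S(e)$ still yields the contradiction. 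Two small points to make explicit in a write-up: (i) an alternating cycle has evenly many edges in total (it alternates between the two matchings), which is the step that converts ``evenly many parity-flipping edges'' into ``evenly many special edges''; and (ii) the invariance of $S$ rests on the same description of how a \cdr move detaches $2x$ and $2x+1$ from their $D$-cycles while leaving the rest intact that the paper itself invokes in proving Theorem~\ref{cdrsortability}, so you are not assuming anything beyond what the paper already uses.
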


\begin{proof}
Let $\pi \in \s_n^{\pm}$ be \cdr-sortable and assume, to the contrary, that a sequence of \cdr applications leads to a fixed point $\varphi = [a_1 \ \cdots \ a_n]$ with $a_i < 0$ for all $i$. Let $b_i = |a_i|$ for all $i$. Then
$$\varphi^* = [2b_1 \ (2b_1 - 1) \ 2b_2 \ (2b_2 - 1) \ \cdots \ 2b_n \ (2b_n - 1)].$$
By Theorem~\ref{cdrsortability}, the letters $0$ and $2n$ appear in disjoint cycles of $D_{\pi}$, and the same must be true for $D_{\varphi}$.

Knowing $\varphi^*$ allows us to compute $D_{\varphi}$. In particular, note that all but two of the $2$-cycles in $U$ and $V_{\varphi}$ contain values of opposite parities. Thus the symbol $0$ is preceded in its cycle of $D_{\varphi}$ by a string of odd values, and succeeded in its cycle by a string of even values. Thus there is some other even value $2x$ in the cycle that is preceded by a string of even values and succeeded by a string of odd values. Given the restrictions on $\varphi^*$, the only possibility for this $2x$ is $2n$, which contradicts Theorem~\ref{cdrsortability}.
\end{proof}

An analogous argument constrains the fixed points attainable for a reverse \cdr-sortable permutation.
 
\begin{theorem}\label{cdrreversesortable}
If $\pi \in \s_n^{\pm}$ is reverse \cdr-sortable, then the inverse image notation of every \cdr fixed point of $\pi$ consists entirely of negative values.
\end{theorem}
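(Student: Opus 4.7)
The plan is to mimic the proof of Theorem~\ref{cdrsortable}, replacing the distinguished pair $\{0, 2n\}$ by the pair $\{0, 2n+1\}$ and playing the same parity game at the other end. Suppose $\pi \in \s_n^{\pm}$ is reverse \cdr-sortable and, for contradiction, that some sequence of \cdr applications terminates at a fixed point $\varphi$ that is \emph{not} entirely negative. By Theorem~\ref{cdrfixedinsigned}, $\varphi = [a_1 \ \cdots \ a_n]$ must then be entirely positive, so $\varphi \in \s_n$.

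The first step is to record a \cdr-invariant. The cycle bookkeeping in the proof of Theorem~\ref{cdrsortability} shows that each \cdr move with pointer $\pm(x, x+1)$ (necessarily with $1 \le x \le n-1$) merely deletes $2x$ and $2x+1$ from whatever cycles of $D_{\bullet}$ they occupy, leaving every other cycle intact. In particular, cycles only shrink---they never merge or split---and the values $0, 1, 2n, 2n+1$ are never extracted, so whether $0$ and $2n+1$ lie in a common cycle of $D_{\bullet}$ is invariant under \cdr. A direct computation for the reverse identity $r = [-n, -(n-1), \ldots, -1]$ yields $r^* = [2n, 2n-1, \ldots, 2, 1]$ and
\[
D_r \;=\; (0,\, 2n+1)(1,\, 2n)(2)(3)\cdots(2n-1),
\]
so $0$ and $2n+1$ occupy a common $2$-cycle of $D_r$. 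Applied along any sorting sequence $\pi \to \cdots \to r$, the invariant therefore forces $0$ and $2n+1$ into a common cycle of $D_\pi$; applied along any sequence $\pi \to \cdots \to \varphi$, it forces the same of $D_\varphi$.

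Now the parity contradiction. Since $\varphi \in \s_n$,
\[
\varphi^* \;=\; [\,2a_1 - 1,\; 2a_1,\; 2a_2 - 1,\; 2a_2,\; \ldots,\; 2a_n - 1,\; 2a_n\,].
\]
Each $2$-cycle of $V_\varphi$---the initial $(0,\, 2a_1-1)$, every middle $(2a_k,\, 2a_{k+1}-1)$, and the terminal $(2a_n,\, 2n+1)$---pairs an even value with an odd value, and so does each $2$-cycle of $U$. Hence both $U$ and $V_\varphi$ reverse parity on $\{0, 1, \ldots, 2n+1\}$, and $D_\varphi = V_\varphi U$ preserves it. Every cycle of $D_\varphi$ is thus parity-homogeneous, and since $0$ is even while $2n+1$ is odd, they lie in different cycles of $D_\varphi$, contradicting the previous paragraph. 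No such $\varphi$ can exist, so every \cdr fixed point of $\pi$ must be entirely negative. The main conceptual step is recognising $\{0, 2n+1\}$ as the correct analogue of $\{0, 2n\}$; the cycle invariance and the parity computation then proceed exactly as in the proof of Theorem~\ref{cdrsortable}.
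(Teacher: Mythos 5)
Your proof is correct and is precisely the ``analogous argument'' that the paper invokes without writing out: it transposes the proof of Theorem~\ref{cdrsortable}, and you correctly supply the one detail the paper leaves implicit, namely that the relevant \cdr-invariant is whether $0$ and $2n+1$ share a cycle of $D_{\bullet}$ (they do in $D_r$ for the reversed identity $r$, but cannot in $D_{\varphi}$ for an all-positive $\varphi$ since $D_{\varphi}$ preserves parity). The computation of $D_r$ and the parity bookkeeping both check out.
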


It is tempting to conjecture that in signed permutations $\pi$ with a cycle of $D_{\pi}$ containing both $2n$ and $0$, the segment between $2n$ and $0$ in a cycle has the same properties relative to \cdr as the strategic pile did for \cds. This, however, is not the case.

\begin{example}\label{ex:5 -2 7 4 -1 3 6}
The permutation $\pi = [5 \ -\!2 \ 7 \ 4 \ -\!1 \ 3 \ 6]$ is neither \cdr-sortable (Theorem~\ref{cdrsortable}) nor reverse \cdr-sortable (Theorem~\ref{cdrreversesortable}). The \cdr fixed points of $\pi$ are $[5 \ 6 \ 7 \ 1 \ 2 \ 3 \ 4]$, $[5 \ 1 \ 2 \ 3 \ 4 \ 7 \ 6]$, $[7 \ 4 \ 5 \ 1 \ 2 \ 3 \ 6]$, $[7 \ 1 \ 2 \ 3 \ 4 \ 5 \ 6]$, $[-\!1 \ -\!5 \ -\!4 \ -\!7 \ -\!6 \ -\!3 \ -\!2]$, and $[-\!1 \ -\!7 \ -\!6 \ -\!5 \ -\!4 \ -\!3 \ -\!2]$, while the analogue of the strategic pile for $\pi$ is $\{6,-1,4\}$.
\end{example}

Next we derive a few parity results for \cdr analogously to corresponding results for \cds.

\begin{corollary}\label{cdrcds}
Let $\pi \in \s_n^{\pm}$ be satisfy $\pi(j)\bmod 2 = 0 \Leftrightarrow j\bmod 2 = 1$ for all $j \le n$. Then $\pi$ in not \cdr-sortable.
\end{corollary}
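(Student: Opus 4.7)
The plan is to identify a parity invariant of the \cdr operation that the identity permutation fails to satisfy. In the inverse image notation $\pi = [a_1, \ldots, a_n]$ with $a_k = \pi^{-1}(k)$, the entry $|a_k|$ is the unique $j \in \{1,\ldots,n\}$ with $|\pi(j)| = k$, so the hypothesis $\pi(j) \bmod 2 = 0 \Leftrightarrow j \bmod 2 = 1$ is equivalent to $|a_k| + k$ being odd for every $k \in \{1, \ldots, n\}$. The identity has $|a_k| = k$, so $|a_k| + k = 2k$ is even, and the identity fails the invariant. It therefore suffices to prove that every application of \cdr preserves this invariant, since then $\pi$ cannot be transformed into the identity.

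To prove preservation I would begin with a case analysis of how a context $\{p, -p\}$ with $p = (x, x+1)$ can arise. Because no two of $a_1, \ldots, a_n$ share an absolute value, the entries $x$ and $-x$ cannot both appear in $\pi$, and similarly for $x+1$ and $-(x+1)$; consequently only two configurations remain. In Case~A, $p$ is the right pointer of a positive entry $a_i = x$ and $-p$ is the right pointer of a negative entry $a_j = -(x+1)$. In Case~B, $p$ is the left pointer of a positive entry $a_i = x+1$ and $-p$ is the left pointer of a negative entry $a_j = -x$. Taking $i < j$ (the opposite order being symmetric), the \cdr move reverses and negates the entries at positions $i+1, \ldots, j$ in Case~A and the entries at positions $i, \ldots, j-1$ in Case~B.

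Writing $\sigma$ for the resulting permutation, in Case~B the new entry at position $k \in [i, j-1]$ is $-a_{i+j-1-k}$, whose absolute value equals $|a_{i+j-1-k}|$. The invariant for $\sigma$ at position $k$ therefore reduces, via the invariant for $\pi$ at position $i+j-1-k$, to the requirement that $i + j$ be odd. The hypothesis applied at positions $i$ and $j$ gives $(x+1) + i$ and $x + j$ both odd, which indeed forces $i + j$ odd; Case~A runs identically, extracting $i + j$ odd from $|a_i| = x$ and $|a_j| = x+1$. Entries outside the reversed block are unchanged, so the invariant persists through a single \cdr move, and iterating over an arbitrary sequence yields the conclusion.

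I expect the main obstacle to be the bookkeeping: verifying that the two-case decomposition of applicable contexts is exhaustive and that the block endpoints are indexed correctly in each case. Once these details are set up, the parity arithmetic is immediate, and the proof parallels the \cds parity argument of Lemma~\ref{cdsparityinvariance}, the only new ingredient being that the negation step of \cdr does not affect absolute values and hence does not affect parities.
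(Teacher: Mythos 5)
Your proof is correct. The two-case decomposition of admissible \cdr contexts is exhaustive (exactly one of $\pm x$ and one of $\pm(x+1)$ occurs in a signed permutation, and the mixed-sign combinations are precisely the ones producing both $p$ and $-p$), the block endpoints match the paper's definition, and the parity arithmetic checks out: in both cases $i+j$ is forced to be odd, which is exactly what makes the reversal position-parity-preserving. The underlying invariant is the same one the paper isolates as Lemma~\ref{cdrparityinvariance}(2), which you re-derive in the equivalent form ``$|a_k|+k$ is odd for all $k$.'' Where you genuinely diverge is in how the invariant is used to conclude. The paper's proof examines an arbitrary \cdr fixed point $\varphi$ reached from $\pi$, splits on whether $\varphi$ is all-negative (invoking Theorem~\ref{cdrsortable}) or all-positive, and in the latter case chains through Corollary~\ref{parityswitch}, Theorem~\ref{swapsortable}, the strategic pile, and the breakpoint permutation $D_{\pi}$ of Theorem~\ref{cdrsortability}. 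You instead observe that the identity permutation itself violates the invariant ($|a_k|+k=2k$ is even), so no sequence of \cdr moves can ever reach it --- no fixed-point analysis, no \cds machinery, and no $D_{\pi}$ needed. Your route is shorter and more self-contained; what the paper's longer route buys is that it exercises and interlinks the structural results (\cdr fixed-point signs, strategic piles, $D_\pi$) that the surrounding sections develop, but for the bare statement of the corollary your direct argument suffices.
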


\begin{proof}
Let $\varphi$ be a fixed point resulting from applying \cdr operations. If all entries in $\varphi$ are negative, then Theorem~\ref{cdrsortable} implies that $\pi$ is not \cdr-sortable. On the other hand, if $\varphi\in\s_n$, then Lemma~\ref{cdrparityinvariance} and the parity property of $\pi$ imply that $\varphi$ also has this parity property. By Lemma~\ref{parityswitch}, we must have that $n$ is even and $\varphi$ is not \cds-sortable. Then by Theorem~\ref{swapsortable}, the strategic pile of $\varphi$ is nonempty, and so $0$ and $2n$ appear in the same cycle of $D_{\pi}$. Thus, by Theorem~\ref{cdrsortability}, the signed permutation $\pi$ is not \cdr-sortable.
\end{proof}

Note that a signed permutation satisfying the parity property hypothesis of Corollary~\ref{cdrcds} may still be reverse \cdr-sortable, as illustrated by $[8 \ 3 \ 6 \ 1 \ -\!4 \ 7 \ 2 \ 5]$. 

\begin{lemma}[\cdr Parity Invariance]\label{cdrparityinvariance}
Let $\pi$ be a signed permutation with either of the following properties:
\begin{enumerate}
\item $\pi(j) \bmod 2=0 \iff j \bmod 2=0$ for all $j$, or
\item $\pi(j) \bmod 2=1 \iff j \bmod 2=0$ for all $j$.
\end{enumerate}
Let $\sigma$ be obtained by applying \cdr to $\pi$ with some context. Then $\sigma$ satisfies the same property above.
\end{lemma}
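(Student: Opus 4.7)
My plan is to observe that \cdr with context $\{p,-p\}$ reverses and negates a contiguous block of entries, say occupying positions $\ell,\ell+1,\ldots,r$ of $\pi$. Negation does not affect $|a|\bmod 2$, so the new entry at position $k$ of the block has parity equal to that of $|a_{\ell+r-k}|$. Hence either property (1) or (2) is preserved by this operation if and only if $\ell+r-k\equiv k\pmod 2$ for all $k$ in the block, which reduces to the single condition that $\ell+r$ be even. So the entire lemma boils down to showing that the block endpoints $\ell$ and $r$ always have the same parity.

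The key supporting step is to show that each pointer $p=(x,x+1)$ functions as a canonical ``parity separator.'' Indeed, $p$ arises either as the right pointer of an entry $a_i=x$ (sitting between positions $i$ and $i+1$) or as the left pointer of an entry $a_i=x+1$ (sitting between positions $i-1$ and $i$). Under property (1), $|a_i|\equiv i\pmod 2$, so in both subcases the position to the left of $p$ has parity $x\bmod 2$ and the position to the right of $p$ has parity $(x+1)\bmod 2$. An analogous case analysis for $-p=-(x+1,x)$, attached to either $-x$ or $-(x+1)$, shows that $-p$ sits with a position of parity $(x+1)\bmod 2$ on its left and a position of parity $x\bmod 2$ on its right.

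Now the block reversed by \cdr is exactly the segment of positions strictly between the locations of $p$ and $-p$. Its left endpoint $\ell$ is the position immediately to the right of the leftmost of $\{p,-p\}$ in $\pi$, and its right endpoint $r$ is the position immediately to the left of the rightmost of them. By the parity calculation, regardless of whether $p$ or $-p$ comes first, both $\ell$ and $r$ carry parity $(x+1)\bmod 2$. Hence $\ell+r$ is even, completing the proof for property (1). The argument for property (2) is identical in structure: the hypothesis $|a_i|\not\equiv i\pmod 2$ merely swaps the parity assignments on each side of $p$ and $-p$, so both endpoints now carry parity $x\bmod 2$, and again $\ell+r$ is even.

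The only real obstacle is the bookkeeping of the subcases (left vs.\ right pointer for each of $p$ and $-p$, and which of the two appears first in $\pi$). The unifying observation that trivializes this bookkeeping is precisely the ``parity separator'' property above, which makes the endpoint parities depend only on $p$ and the active parity convention, not on the subcase.
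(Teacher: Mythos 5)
Your proposal is correct and is in substance the same argument as the paper's: both come down to the fact that under either hypothesis consecutive entries alternate in parity, so the two endpoints of the reversed block agree in parity and reversal-plus-negation preserves the position/value parity correspondence (the paper phrases this via entry parities after a WLOG normal form, you via position parities and the $\ell+r$ even criterion). One small slip in your write-up: when $-p$ precedes $p$, both $\ell$ and $r$ carry parity $x \bmod 2$ rather than $(x+1) \bmod 2$; they still agree, so $\ell + r$ is even and the argument stands.
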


\begin{proof}
There is nothing to prove if $\pi$ is a \cdr fixed point, so assume that it is not. Then for some entry $x$ of $\pi$, both $x$ and $-(x+1)$ appear in $\pi$. Without loss of generality, we can assume that (for some entries $a$ and $b$) $\pi$ has the following form for $x > 0$:
$$\pi = [ \cdots \ x \ a \ \cdots \ b \ -\!(x+1) \ \cdots],$$
and the \cdr operation with context $\pm(x,x+1)$ will transform the segment $a \ \cdots \ b \ -\!(x+1)$ of $\pi$ into $(x+1) \ -\!b \ \cdots \ -\!a$ to produce $\sigma$. Certainly $x$ and $x+1$ have different parities. Moreover, if $\pi$ satisfies one of the parity properties listed above, then $x$ and $a$ have different parities, as do $b$ and $x+1$. Thus, because for any entry $z$, $z$ and $-z$ always have the same parity, the signed permutation $\sigma$ satisfies the same parity property that $\pi$ did.
\end{proof}

\section{\cdr and Games}\label{section:cdrgames}

As already noted, strategy plays a crucial role in sorting signed permutations using \cdr. This suggests natural combinatorial games based on \cdr. We define one class of such games: Fix a signed permutation $\pi$ and a set  \textsf{F} of \cdr fixed points of $\pi$. The two-person game
$\textsf{CDR}(\pi,\textsf{F})$
is played as follows.

\begin{quote}
Player ONE applies a \cdr operation to $\pi$ to produce $\pi_1$. Player TWO applies a \cdr operation to $\pi_1$ to produce $\pi_2$. The players alternate \cdr moves in this manner until a fixed point $\varphi$ is reached. Player ONE wins if $\varphi \in \textsf{F}$. Otherwise, player TWO wins.
\end{quote}

The \emph{normal play} and \emph{misere} versions of the game are defined as follows:

\begin{definition}\
\begin{enumerate}\renewcommand{\labelenumi}{(\alph{enumi})}
\item The normal play game game $\textsf{NCDR}(\pi)$ proceeds as $\textsf{CDR}(\pi,\textsf{F})$, but the player that makes the last legal move wins.
\item The misere play game $\textsf{MCDR}(\pi)$ proceeds as $\textsf{CDR}(\pi,\textsf{F})$, but the player that makes the last legal move loses.
\end{enumerate}
\end{definition}

As all these games are of finite length, and none of them ends in a draw, Zermelo's Theorem~\ref{fundthfingames} implies that for each choice of $\pi\in \s_n^{\pm}$ some player has a winning strategy in the game. This fact suggests the following three decision problems.

\begin{quote}
\framebox{
\parbox{.8\textwidth}{
{\bf D.6 \hspace{0.1in} NORMAL SORTING GAME:}

INSTANCE: A positive integer $n$, and a signed permutation $\pi$.

QUESTION: Does ONE have a winning strategy in the game $\textsf{NCDR}(\pi)$?}}
\end{quote}

\begin{quote}
\framebox{
\parbox{.8\textwidth}{
{\bf D.7 \hspace{0.1in} MISERE SORTING GAME:}

INSTANCE: A positive integer $n$ and a signed permutation $\pi$.

QUESTION: Does ONE have a winning strategy in the game $\textsf{CDRM}(\pi)$?}}
\end{quote}

\begin{quote}
\framebox{
\parbox{.8\textwidth}{
{\bf D.8 \hspace{0.1in} FIXED POINT SORTING GAME:}

INSTANCE: A positive integer $n$, a signed permutation $\pi$, and a set \textsf{F} of \cdr-fixed points.

QUESTION: Does ONE have a winning strategy in the game $\textsf{CDR}(\pi,\textsf{F})$?}}
\end{quote}

Since we originally formulated these games, the normal play and misere sorting game decision problems, \textbf{D.6} and \textbf{D.7}, have been solved in \cite{JSST2} by finding a linear time decision process. The theory of the fixed point sorting game for signed permutations, however, is not well understood. One of the main obstacles is that the set of \cdr fixed points reachable through the applications of \cdr operations has not been characterized in terms of simple structures, as is done in Theorem \ref{strategicpilecorollary} for the case of \cds.

\section{An application}\label{section:application}

The operations \cdr and \cds are the hypothesized operations for decrypting the micronuclear precursors of macronuclear genes in ciliates. One would expect that these operations should be robust and error free. The \cds Inevitability Theorem, Theorem \ref{cdsinevitable}, shows that for \cds-sortable micronuclear precursors, applying \cds operations in any order would successfully decrypt the micronuclear precursor. Thus, no strategic intervention is required. However, as illustrated in Example \ref{ex:cdrlengths}, for \cdr-sortable micronuclear precursors, success in decrypting the precursor depends on the order in which \cdr operations are applied. This raises the question of whether there is a mechanism that strategically governs the order in which \cdr operations are applied by the ciliate decryptome. In \cite{JSST2} this question has been answered by proving that the current mathematical model is robust enough that no additional strategic mechanisms are needed.

\section{Future work}\label{section:future}

The three most basic decision problems left open by this investigation are the \cdr FIXED POINT decision problem, {\bf D.5} and the FIXED POINT SORTING GAME decision problems {\bf D.2} and {\bf D.8}. We do not expect {\bf D.5} to be an NP-complete problem. Experience suggests a higher complexity level for decision problems {\bf D.2} and {\bf D.8} - it would be interesting to know whether these two decision problems are PSPACE complete.

There are also several enumerative questions that are not yet fully understood. For example, how many \cds-sortable permutations are in $\s_n$? The values for the first few $n$ are given in Table~\ref{table:enumeration} and appear in \cite{oeis} as A249165, but a general formula is not known.

\begin{table}[htbp]
$$\begin{array}{c|c|c|c|c|c|c|c|c|c|c}
n & 1 & 2 & 3 & 4 & 5 & 6 & 7 & 8 & 9 & 10\\
\hline
\# \text{\cds-sortable } \pi & 1 & 1 & 4 & 13 & 72 & 390 & 2880 & 21672 & 201600 & 1935360
\end{array}$$
\caption{The number of \cds-sortable permutations in $\s_n$ for $n \le 10$.}\label{table:enumeration}
\end{table}

E.~Holmes and P.~A.~Plummer have independently arrived at and extended this sequence of data \cite{HoPl}, and make the following conjecture.

\begin{conj}[Holmes and Plummer]\label{hpconjecture}
$\# \{\text{\cds-sortable elements in } \s_{2k+1}\} = (k+1) (2k)!.$
\end{conj}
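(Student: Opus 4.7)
By Theorem~\ref{swapsortable}, a permutation $\pi \in \s_n$ is \cds-sortable if and only if $0$ and $n$ lie in different cycles of $C_\pi = Y_\pi X$, where $X=(0,1,\ldots,n)$. Because the map $\pi \mapsto Y_\pi$ is a bijection between $\s_n$ and the set of $(n+1)$-cycles on $\{0,1,\ldots,n\}$, Conjecture~\ref{hpconjecture} is equivalent to the statement that, for $n = 2k+1$, exactly $(k+1)(2k)!$ of the $n!$ $(n+1)$-cycles $Y$ make $0$ and $n$ occupy distinct cycles of $YX$.

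I would approach this through the character theory of $\s_{n+1}$. The Frobenius formula combined with the Murnaghan-Nakayama rule gives, for each cycle type $\lambda \vdash n+1$, the exact count of $(n+1)$-cycles $Y$ such that $YX$ has cycle type $\lambda$, namely
\[
H(\lambda) \;=\; \frac{|C_\lambda|}{n+1}\sum_{j=0}^{n}\frac{\chi^{(n+1-j,\,1^j)}(\lambda)}{\binom{n}{j}},
\]
where the sum runs over hook partitions, since these are the only irreducible characters of $\s_{n+1}$ that do not vanish on an $(n+1)$-cycle. The obstacle is that the separation property ``$0$ and $n$ in different cycles of $YX$'' is not a function of the cycle type alone and so cannot be captured by summing $H(\lambda)$ over qualifying $\lambda$.

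To refine the count, I would exploit the cyclic symmetry: the centralizer of $X$ in $\s_{n+1}$ is $\langle X\rangle$, so conjugation by $X^s$ permutes $(n+1)$-cycles and carries the pair $\{0, n\}$ to the cyclically adjacent pair $\{s, s-1 \bmod (n+1)\}$. Averaging yields
\[
\#\bigl\{Y : 0,\, n \text{ in the same cycle of } YX\bigr\}
\;=\; \frac{1}{n+1}\sum_{Y} A(YX),
\qquad A(\sigma) \;=\; \#\bigl\{s : s,\, s-1 \bmod (n+1) \text{ in a common cycle of }\sigma\bigr\}.
\]
The right-hand side can be tackled by decomposing $A(\sigma)$ according to cycle type and applying $H(\lambda)$: within a cycle of $\sigma$ of length $\ell$, one counts pairs $(s, s-1 \bmod (n+1))$ both lying in that cycle, producing a refined character sum.

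The core technical obstacle is evaluating this refined character sum and showing it simplifies to $(k+1)(2k)!$ when $n = 2k+1$. I expect parity cancellations to drive the simplification: with $n+1 = 2k+2$ even, the hook-character values $\chi^{(n+1-j,\,1^j)}((n+1))$ alternate as $(-1)^j$, producing substantial cancellation unavailable in the even-$n$ case (whose data in Table~\ref{table:enumeration} admits no comparably clean closed form). As a complementary strategy, I would pursue a bijection between sortable permutations in $\s_{2k+1}$ and pairs $(j, \tau) \in \{0,1,\ldots,k\} \times \s_{2k}$, perhaps indexed by the position of $n$ within the cycle of $C_\pi$ containing it together with a canonical reduction of $\pi$, motivated by the strategic pile structure of Theorem~\ref{strategicpilecorollary}.
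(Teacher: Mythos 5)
The statement you were given is Conjecture~\ref{hpconjecture}, and the paper offers no proof of it: it is attributed to Holmes and Plummer, supported only by the data in Table~\ref{table:enumeration}, and the surrounding text states explicitly that a general formula is not known. So there is no proof in the paper to compare your work against, and the relevant question is simply whether your submission establishes the result. It does not; it is a research plan rather than a proof. The reductions you do carry out are sound: Theorem~\ref{swapsortable} does convert the count into the number of $(n+1)$-cycles $Y$ on $\{0,\ldots,n\}$ for which $0$ and $n$ lie in distinct cycles of $YX$, the map $\pi\mapsto Y_\pi$ is indeed a bijection onto the $(n+1)$-cycles, and averaging over conjugation by powers of $X$ is a legitimate way to symmetrize the separation condition, which, as you correctly observe, is not determined by the cycle type of $YX$ alone (so the Hultman-number machinery of \cite{DL} does not apply directly). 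A small slip: your displayed formula for $H(\lambda)$ omits the factor $(-1)^j = \chi^{(n+1-j,1^j)}\bigl((n+1)\bigr)$ that you invoke later, though this does not affect the larger picture.

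The genuine gap is that you stop exactly where the mathematics would have to happen. You name ``evaluating this refined character sum'' as the core obstacle, you ``expect'' parity cancellations, and you propose a bijection with pairs $(j,\tau)\in\{0,\ldots,k\}\times\s_{2k}$ that you ``would pursue''; none of these is executed, no closed form is extracted from the sum $\sum_j (-1)^j/\binom{n}{j}$ or its refinement by the adjacency statistic $A(\sigma)$, and nothing is verified beyond what Table~\ref{table:enumeration} already shows (the values $1,4,72,2880,201600$ do match $(k+1)(2k)!$, but that is the evidence the conjecture was based on, not a proof). As written, the statement remains exactly as open as it is in the paper. To turn this into a proof you would need either to carry the character-theoretic evaluation through to the value $(k+1)(2k)!$ for $n=2k+1$ (and explain why no comparable simplification occurs for even $n$), or to exhibit the proposed bijection explicitly and prove it is well defined and invertible.
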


We know of no corresponding conjecture for enumerating \cds-sortable elements in $\s_{2k}$.  
Enumerating the number of elements of $\s_n^{\pm}$ that are \cdr-sortable has been considered in \cite{LRSSS}, and appears as sequence A260511 in \cite{oeis}. In \cite{LRSSS} it is shown that this sequence enumerates the set of elements of $\s_n^{\pm}$ reverse sortable by \cdr. As a result the number of elements of $\s_n^{\pm}$ not sortable or reverse sortable by applications of \cdr can be obtained from the sequence A260511.

The \cds-sortability criterion of Theorem~\ref{swapsortable} suggests that there might be a characterization in terms of permutation patterns. Such a relationship would not be entirely surprising since sorting operations and permutation patterns have been linked before: Stack-sortable permutations can be characterized by avoidance of a single pattern \cite{knuth}. Though \cds-sortable permutations are not obviously characterized in this manner (since they are not closed under (classical) pattern containment), there may be some other way to describe these objects in terms of patterns, perhaps utilizing the various presentations of a permutation. 

In regards to the mathematical model for ciliate micronuclear decryption arising from the hypotheses in \cite{EPModel}: In \cite{EHPPR} and some earlier papers it is assumed that besides \cdr and \cds, there is an additional sorting operation, \emph{boundary} \textsf{ld}, that would sort a \cds fixed point to the identity permutation. This operation corresponds to Christie's minimal block interchange applied to a \cds fixed point. At this stage it has not been experimentally confirmed whether this operation occurs, nor has a satisfactory molecular mechanism for this operation in combination with \cdr and \cds been described. Prior to \cite{Cetal}, all known examples of permutations representing micronuclear precursors of macronuclear genes in ciliates were \cds-sortable. However, the permutation representing the red micronuclear precursor in Figure 3 B,  and the permutation representing the gold micronuclear precursor in Figure 3 C of \cite{Cetal} are both \cds fixed points, and an operation beyond \cds or \cdr is needed to sort this pattern to the identity permutation corresponding to the macronuclear gene. It would be interesting to learn how these two particular micronuclear precursors are in fact processed by the ciliate decryptome.

\section{Acknowledgements}
The research represented in this paper was funded by an NSF REU grant DMS 1062857, by Boise State University and by the Department of Mathematics at Boise State University. Wauck was a member of the 2012 REU cohort while Adamyk, Holmes, Moritz and Mayfield were members of the 2013 REU cohort.

We thank E.~Holmes and P.~A.~Plummer for valuable communication regarding their findings, including Conjecture~\ref{hpconjecture}, and confirmation of the data appearing in Table~\ref{table:enumeration}.


\begin{thebibliography}{99}

\bibitem{BP} V. Bafna and P.A. Pevzner, {Sorting by transpositions}, \emph{Proceedings of the 6th Annual ACM-SIAM Symposium on Discrete Algorithms} (1995) 614--623.

\bibitem{AB} A.~Bergeron, {A very elementary presentation of the Hannenhalli-Pevzner theory}, \emph{Discrete Applied Mathematics} 146 (2005), 134 -- 145.

\bibitem{BHR} R.~Brijder, H.~ J.~Hoogeboom, and G.~Rozenberg, {Reducibility of gene patterns in ciliates using the breakpoint graph}, \emph{Theoretical Computer Science}, 356:12 (2006),  26--45.

\bibitem{Cetal} X.~Chen \emph{et al.}, {The Architecture of a Scrambled Genome
Reveals Massive Levels of Genomic Rearrangement during Development}, \emph{Cell} 158 (2014), 1187 - 1198.

\bibitem{DC}  D.~A.~Christie, {Sorting permutations by block interchanges}, \emph{Information Processing Letters} 60 (1996), 165--169.

\bibitem{DL} J.~Doignon and A.~Labarre, {On Hultman Numbers}, \emph{Journal of Integer Sequences} 10 (2007), 1 -- 13 (Article Number 07.6.2)

\bibitem{EHPPR} A.~Ehrenfeucht, T.~Harju, I.~Petre, D.~M.~Prescott and G.~Rozenberg, \emph{Computation in living cells: Gene assembly in ciliates}, {Springer} Natural computing series, 2004.

\bibitem{EHPR} A.~Ehrenfeucht, T.~Harju, I.~Petre, and G.~Rozenberg, {Characterizing the micronuclear gene patterns in ciliates}, \emph{Theory of Computing Systems}  35:5 (2002), 501--519.

\bibitem{FLRTV} G.~Fertin, A.~Labarre, I.~Rusu, E.~Tannier and S.~Vialette, \emph{Combinatorics of Genome Rearrangements}, {The MIT Press}, 2009.

\bibitem{HP} S.~Hannenhalli, P.~A.~Pevzner, {Transforming Cabbage into Turnip: Polynomial Algorithm for Sorting Signed Permutations by Reversals}, \emph{Journal of the Association for Computing Machinery}, Volume 46:1 (1999), 1--27.

\bibitem{HoPl} E.~Holmes and P.~A.~Plummer, {CDS sortable permutations: an orbit condition and some counting} (preliminary title, in preparation).
                   
\bibitem{JSST} C.~L.~Jansen, M.~Scheepers, S.~L.~Simon and E.~Tatum, {Permutation sorting and a game on graphs}, arXiv:1411.5429, 

\bibitem{JSST2} C.~L.~Jansen, M.~Scheepers, S.~L.~Simon and E.~Tatum, {Context directed reversals and the ciliate decryptome}, (preliminary title, in preparation). 

\bibitem{knuth} D.~E.~Knuth, \textit{The Art of Computer Programming}, Volume 1, Addison-Wesley, Reading, Massachusetts, 1968.

\bibitem{LRSSS} H.~Q.~Li, J.~ Ramsey, M.~Scheepers, H.~Schilling and C.~D.~Stanford, \emph{Signed permutations and two-rooted graphs} (preliminary title, in preparation)

\bibitem{DMP1} D.~M.~Prescott, {The DNA of ciliated protozoa}, \emph{Microbiological Reviews} 58:2 (June 1994), 233--267.

\bibitem{DMP} D.~M.~Prescott, {Genome Gymnastics: Unique Modes of DNA Evolution and Processing In Ciliates}, \emph{Nature Reviews Genetics} 1:3 (2000), 191--198.
                    
\bibitem{EPModel} D.~M.~Prescott, A.~Ehrenfeucht, G.~Rozenberg,  {Template-guided recombination for IES elimination and unscrambling of genes in stichotrichous ciliates}, \emph{Journal of Theoretical Biology} 222 (2003), 323--330.
    
\bibitem{Schetal} S.~W.~Schaeffer, A.~Bhutkar, B.~F.~McAllister, M.~Matsuda, L.~M.~ Matzkin, P.~M.~O’Grady,~C.~Rohde,~V.~L.~S.~Valente, M.~Aguadé, W.~W.~Anderson, K.~Edwards, A.~C.~L.~Garcia, J.~Goodman, J.~Hartigan, E.~Kataoka, R.~T.~Lapoint, E.~R.~Lozovsky, C.~A.~Machado, M.~A.~F. Noor, M.~Papaceit, L.~K.~Reed, S.~Richards, T.~T.~Rieger, S.~M.~ Russo, H.~Sato, C.~Segarra, C.~ R.~ Smith,~T.~F. Smith,~V. Strelets,~Y.~N. Tobari, Y. Tomimura, M. Wasserman, T. Watts, R. Wilson,
K. Yoshida, T.~A.~Markow, W.~M.~Gelbart, and T.~C.~Kaufman, {Polytene chromosomal maps of 11 Drosophila species: the order of genomic scaffolds inferred from genetic and physical maps}, \emph{Genetics} 179:3 (2008), 1601–-1655.

\bibitem{oeis}  N.~J.~A.~Sloane, The on-line encyclopedia of integer sequences, published electronically at \hfill \phantom{*} {\tt http:/$\!\!$/www.oeis.org/}.

\bibitem{DS} A.H. Sturtevant and Th. Dobzhansky, {Inversions in the third chromosome of wild races of Drosophila pseudoobscura, and their use in the study of the history of the species}, \emph{ Proceedings of the National Academy of Sciences} 22 (1936) 448 -- 450.

\bibitem{Z} E.~Zermelo, {\"{U}ber eine Anwendung der Mengenlehre auf die Theorie des Schachspiels}, \emph{Proceedings of the Fifth Congress of Mathematicians}, {Cambridge University Press} (1913), 501--504.
                 
\end{thebibliography}
\end{document}